\newtheorem{thm}{Theorem}[section]
\newtheorem{la}[thm]{Lemma}
\newtheorem{Defn}[thm]{Definition}
\newtheorem{Remark}[thm]{Remark}
\newtheorem{Note}[thm]{Note}
\newtheorem{prop}[thm]{Proposition}
\newtheorem{cor}[thm]{Corollary}
\newtheorem{Example}[thm]{Example}
\newtheorem{Examples}[thm]{Examples}
\newtheorem{Problems}[thm]{Problems}
\newtheorem{Problem}[thm]{Problem}
\newtheorem{Convention}[thm]{Convention}
\newtheorem{Number}[thm]{\!\!}
\newenvironment{defn}{\begin{Defn}\rm}{\end{Defn}}
\newenvironment{example}{\begin{Example}\rm}{\end{Example}}
\newenvironment{rem}{\begin{Remark}\rm}{\end{Remark}}
\newenvironment{numba}{\begin{Number}\rm}{\end{Number}}
\newenvironment{proof}{{\noindent\bf Proof.}}%
                  {\nopagebreak\hspace*{\fill}$\Box$\medskip\medskip\par}   
\newcommand{\Punkt}{\nopagebreak\hspace*{\fill}$\Box$}
\newcommand{\wb}{\overline}
\newcommand{\ve}{\varepsilon}
\newcommand{\wt}{\widetilde}
\newcommand{\impl}{\Rightarrow}
\newcommand{\mto}{\mapsto}
\newcommand{\isom}{\cong}
\DeclareMathOperator{\Ad}{Ad}
\newcommand{\N}{{\mathbb N}}
\newcommand{\R}{{\mathbb R}}
\newcommand{\K}{{\mathbb K}}
\newcommand{\C}{{\mathbb C}}
\newcommand{\cO}{{\cal O}}
\newcommand{\cV}{{\cal V}}
\newcommand{\cW}{{\cal W}}
\newcommand{\cg}{{\mathfrak g}}
\newcommand{\ch}{{\mathfrak h}}
\DeclareMathOperator{\Diff}{Diff}
\newcommand{\dl}{{\displaystyle \lim_{\longrightarrow}}}
\newcommand{\pl}{{\displaystyle \lim_{\longleftarrow}}}
\DeclareMathOperator{\Aut}{Aut}
\newcommand{\sub}{\subseteq}
\DeclareMathOperator{\im}{im}
\DeclareMathOperator{\pr}{pr}
\DeclareMathOperator{\id}{id}
\newcommand{\cS}{{\cal S}}
\DeclareMathOperator{\evol}{evol}
\DeclareMathOperator{\Evol}{Evol}
\DeclareMathOperator{\ev}{ev}
\DeclareMathOperator{\Germ}{Germ}
\DeclareMathOperator{\Gau}{Gau}
\begin{document}
$\;$\\[-27mm]
\begin{center}
{\Large\bf Regularity properties\vspace{2mm} of
infinite-dimensional Lie groups, and semiregularity}\\[7mm]
{\bf Helge Gl\"{o}ckner}\vspace{4mm}
\end{center}
\begin{abstract}
\noindent
Let $G$ be a Lie group modelled on a locally convex space,
with Lie algebra $\cg$, and $k\in \N_0\cup\{\infty\}$.
We say that $G$ is \emph{$C^k$-semiregular}
if each $\gamma\in C^k([0,1],\cg)$ admits a left evolution
$\Evol(\gamma)\in C^{k+1}([0,1],G)$.
If, moreover, the map $\evol\colon C^k([0,1],\cg)\to G$,
$\gamma\mto\Evol(\gamma)(1)$ is smooth,
then $G$ is called \emph{$C^k$-regular}.
For~$G$ a $C^k$-semiregular Lie group
and $m\in \N_0\cup\{\infty\}$,
we show that $\evol\colon C^k([0,1],\cg)\to G$
is~$C^m$ if and only~if
\[
\Evol\colon C^k([0,1],\cg)\to
C^{k+1}([0,1],G)
\]
is~$C^m$.
If $\evol$ is continuous at~$0$, then
$\evol$ is continuous.
If $G$ is a $C^0$-semiregular Lie group, then
continuity of $\evol$ implies its smoothness
(so that $G$ will be $C^0$-regular),
if smooth
homomorphisms from~$G$ to $C^0$-regular
Lie groups separate points on~$G$ and $\cg$ is (e.g.)\ sequentially complete.
Further criteria for regularity properties
are provided, and used to prove regularity
for several important classes of Lie groups.
In particular, we show that $\Diff(M)$ is $C^1$-regular for each
paracompact finite-dimensional smooth manifold~$M$
(which need not be $\sigma$-compact).
We also provide tools which enable
the proof of $C^1$-regularity for the
Lie group $\Diff^\omega(M)$
of analytic diffeomorphisms
of a real analytic compact manifold~$M$.\vspace{4mm}
\end{abstract}
\noindent
{\bf\large Introduction and statement of the results}\\[4mm]
Let $G$ be a Lie group modelled on a locally convex space
(with neutral\linebreak
element~$e$ and Lie algebra $\cg:=L(G):=T_e(G)$),
as in \cite{RES}, \cite{GaN}, and \cite{SUR}
(cf.\ \cite{Mil} for the case of sequentially complete
modelling spaces).
Given $g\in G$,
we use the tangent map of the left translation
$\lambda_g\colon G\to G$, $x\mto gx$
to define
\[
g.v:=(T_x\lambda_g)(v)\in T_{gx}(G)\quad \mbox{for $x\in G$,
$v\in T_x(G)$.}
\]
Let $k\in \N_0\cup\{\infty\}$. We say that a Lie group $G$ is
\emph{$C^k$-semiregular} if the initial value problem
\[
\eta'(t)\, =\, \eta(t).\gamma(t),\quad \eta(0)=e
\]
has a (necessarily unique) solution $\Evol(\gamma):=\eta\in C^{k+1}([0,1],G)$
for each $\gamma\in C^k([0,1],\cg)$.
If~$G$ is $C^k$-semiregular and the map
\[
\evol\colon C^k([0,1],\cg)\to G,\quad \gamma\mto \eta(1)=\Evol(\gamma)(1)
\]
is smooth, then the Lie group~$G$ is called \emph{$C^k$-regular}
(compare \cite{SUR}; cf.\ also \cite{GDL},
where this property is referred to as
\emph{strong} $C^k$-regularity).
Let
\[
C_*^{k+1}([0,1],G):=\{\gamma\in C^{k+1}([0,1],G)\colon \gamma(0)=e\},
\]
endowed with
the Lie group structure recalled in~\ref{defmastar}.
For $G$ a $C^k$-semiregular Lie group and $m\in \N_0\cup\{\infty\}$,
we show that $\evol\colon C^k([0,1],\cg)\to G$
is $C^m$ if and only if
$\Evol\colon C^k([0,1],\cg)\to C^{k+1}([0,1],G)$ is $C^m$
(see Lemma~\ref{evolEvol}).
This implies:\\[4mm]
{\bf Theorem A}
\emph{If $G$ is a $C^k$-regular Lie group, then the map}
\[
\Evol\colon C^k([0,1],\cg)\to C^{k+1}_*([0,1],G)
\]
\emph{is a $C^\infty$-diffeomorphism.
In particular,
$\Evol\colon C^k([0,1],\cg)\to C^{k+1}([0,1],G)$
is a smooth mapping.}\\[4mm]
A Lie group is called \emph{regular} if it is
$C^\infty$-regular (see \cite{SUR};
cf.\ \cite{Mil} for the sequentially complete case,
and \cite{KMR} for corresponding notions
in the convenient setting of analysis).\footnote{Compare
also \cite{OMO} for an earlier
concept of regularity in Fr\'{e}chet-Lie groups,
which is stronger than $C^0$-regularity.}
The special case $k=\infty$ of Theorem~A
(i.e., the special case of regular Lie groups)
is known; see \cite[Lemma~A.5\,(1)]{NaW}.
We recall that the modelling space of a regular Lie group
is necessarily Mackey complete
(see \cite[Remark~II.5.3\,(b)]{SUR}).
We refer to \cite[2.14]{KaM}
for the definition of Mackey completeness.
Let $r\in \N\cup\{\infty\}$.
It is known that a locally convex space $E$
is Mackey complete if and only if the weak integral
$\int_0^1\gamma(t)\,dt$ exists in~$E$
for each $C^r$-curve $\gamma\colon [0,1]\to E$
(cf.\ \cite[2.14]{KaM}).
The additive group of a locally convex space~$E$ is
regular if and only if~$E$ is Mackey
complete~\cite[Proposition~II.5.6]{SUR}.\\[3mm]
Let us say that a Lie group is \emph{$1$-connected}
if it is connected and simply connected.
Consider the following statements:\vspace{1mm}
\begin{itemize}
\item[(a)]
If $H$ is a $1$-connected Lie group,
$G$ a Lie group
and $\phi\colon L(H)\to L(G)$ a continuous Lie algebra
homomorphism, then there is a smooth group homomorphism
$\Phi\colon H\to G$ with $L(\Phi)=\phi$
(where $L(\Phi):=T_1(\phi)$).
\item[(b)]
If $G$ and $H$ are $1$-connected Lie groups
such that $L(G)\cong L(H)$ as a topological Lie algebra,
then $G\cong H$ as a Lie group.
\item[(c)]
If $G$ is a
$1$-connected abelian Lie group,
then $G\isom E/\Gamma$ for the additive group $(E,+)$
of some locally convex space~$E$
and some discrete additive subgroup $\Gamma\sub E$.
\item[(d)]
For each $v\in L(G)$, there is a smooth homomorphism
$\gamma_v\colon (\R,+)\to G$ such that $\gamma_v'(0)=v$.\vspace{2mm}
\end{itemize}
It is well-known that (a) holds if $G$ is regular,
whence (b) holds if $G$ and $H$ are regular (see
Theorem III.1.5 and Corollary III.1.6
in \cite{SUR};
cf.\ \cite{Mil}
for the sequentially complete case).
If $G$ is regular, the preceding implies (c)
(as in \cite[Remark~3.13]{NAB};
cf.\ \cite{MaT} for an analogous result in the convenient
setting).
Finally, (d) is a well-known consequence of regularity of $G$
(because $\gamma_v=\Evol(t\mto v)$;
see \cite[Remark~II.5.3\,(a)]{SUR} or \cite{Mil}).
As our second result,
we show that all of these familiar facts become
false for suitable non-regular Lie groups modelled
on non-Mackey complete spaces.\\[4mm]
{\bf Theorem B.} \emph{All of the preceding statements} (a), (b), (c) \emph{and} (d)
\emph{are false for suitable examples of $1$-connected abelian
Lie groups~$G$ and~$H$
modelled on non-Mackey complete spaces,
and the counterexample for} (d) \emph{can be chosen such that
$L(G)\not=\{0\}$ and $\gamma_v$ exists for no non-zero
vector $v\in L(G)$.
Moreover, there is a connected infinite-dimensional Lie group~$K_1$
such that the only group homomorphism $(\R,+)\to K_1$
is the trivial map $t\mto e$.}\\[4mm]
The negative answer to (b)
answers a problem by Neeb
\cite[Problem II.3]{SUR}. Whether Mackey-completeness
or sequential completeness of the modelling spaces
suffices for
a positive answer, without the assumption of regularity
(as originally asked by Milnor) remains unknown.\\[2.4mm]
Let us say that a locally convex space $E$ is \emph{integral complete}
if the weak integral $\int_0^1\gamma(t)\,dt$ exists in~$E$,
for each continuous path $\gamma\colon [0,1]\to E$.
It is known that this property is equivalent to the metric convex compactness
property, meaning that the closed convex hull
of every metrizable compact subset $K\sub E$ is compact \cite{Wei}.
Varying Remark~II.5.3\,(b) and
Proposition~II.5.6
from \cite{SUR}, we also show:\\[4mm]
{\bf Theorem~C.}
\begin{itemize}
\item[(a)]
\emph{Each $C^0$-regular Lie group
has an integral complete modelling space.}
\item[(b)]
\emph{The additive group of a locally convex space~$E$ is a $C^0$-regular
Lie group if and only if~$E$ is integral complete.}
\item[(c)]
\emph{The additive group of a locally convex space~$E$ is a $C^1$-regular
Lie group if and only if~$E$ is Mackey complete.}
\item[(d)]
\emph{There is a Lie group which is $C^1$-regular
but not $C^0$-regular.}\vspace{2mm}
\end{itemize}
Usually, the $C^k$-regularity of a given
Lie group~$G$ is established in two steps:
In the first step, one shows that
each $\gamma\in C^k([0,1],\cg)$
has an evolution, i.e., that $G$ is $C^k$-semiregular.
In the second step, one shows that $\evol$
(or $\Evol$) is smooth.
The following three results are intended to
simplify (or enable) the second step.\\[4mm]
{\bf Theorem D.}
\emph{Let $G$ be a Lie group which is $C^k$-semiregular
for some $k\in \N_0\cup\{\infty\}$.
If $\evol\colon C^k([0,1],\cg)\to G$ is continuous
at~$0$} (\emph{or at any other point}), \emph{then $\evol$
is continuous.}\footnote{Hence also $\Evol\colon
C^k([0,1],\cg)\to C^{k+1}([0,1],G)$
is continuous, by Lemma~\ref{evolEvol}.}\\[4mm]
{\bf Theorem E.}
\emph{Let $G$ be a Lie group which is $C^k$-semiregular for some
$k\in \N_0\cup\{\infty\}$.
If $\evol\colon C^k([0,1],\cg)\to G$ is $C^1$, then $\evol$ is smooth.}\\[4mm]
{\bf Theorem F.}
\emph{Let $G$ be a $C^0$-semiregular
Lie group whose Lie algebra $L(G)$ is integral complete. If $\evol\colon C^0([0,1],\cg)\to G$
is continuous and
there exists a point-separating family $(\alpha_j)_{j\in J}$
of smooth homomorphisms
$\alpha_j\colon G\to H_j$ to
$C^0$-regular Lie groups~$H_j$,
then~$G$ is
$C^0$-regular.}\\[4mm]
For example, the final condition
is satisfied if there exists an injective smooth homomorphism
\[
\alpha\colon G\to H
\]
from $G$ to a $C^0$-regular Lie group~$H$.\\[2.3mm]
Let $m\in \N_0\cup\{\infty\}$.
We say that a subgroup~$H$ of a Lie group~$G$ (modelled
on a locally convex space) is
\emph{$C^m$-initial} in~$G$
if $H$ admits a Lie group structure
with the following properties:
\begin{itemize}
\item[(a)]
The inclusion map $\iota\colon H\to G$
is a smooth homomorphism;
\item[(b)]
$L(\iota)$ is injective;
\item[(c)]
For each map $f\colon X\to H$ from a $C^m$-manifold~$X$
(with or without boundary)
to~$H$, the map $f$ is $C^m$ as a map to~$H$
if and only if~$f$ is $C^m$ as a map to~$G$.
\end{itemize}
For example, an ordinary Lie subgroup
$H\subseteq G$ is $C^m$-initial for each
$m\in \N_0\cup\{\infty\}$,
if $H$ is
modelled on a closed vector subspace
of the modelling space of~$G$.\\[2.3mm]
Also the following tool (announced in slightly weaker
form in~\cite{OWR})
is useful.\\[4mm]
{\bf Theorem G.}
\emph{Let $G$ be a Lie group
and $H\subseteq G$ be a subgroup such that}
\[
H=\{x\in G\colon (\forall j\in J)\;\alpha_j(x)=\beta_j(x)\}
\]
\emph{for families $(\alpha_j)_{j\in J}$
and $(\beta_j)_{j\in J}$ of smooth homomorphisms
$\alpha_j,\beta_j\colon G\to G_j$ to some Lie groups~$G_j$.
Let $k\in \N_0\cup\{\infty\}$.Then the following holds}:
\begin{itemize}
\item[(a)]
\emph{If~$G$ is $C^k$-semiregular and $H$ is $C^m$-initial in~$G$
for some $m\in \N\cup\{\infty\}$
such that $m\leq k+1$, then also~$H$ is $C^k$-semiregular.}
\item[(b)]
\emph{If $G$ is $C^k$-regular and
$H$ is $C^m$-initial in~$G$
for some $m\in \N\cup\{\infty\}$
such that $m\leq k+1$,
then also~$H$ is $C^k$-regular.}\vspace{2mm}
\end{itemize}
Theorem~G (and a variant for local
groups) can be used to compare
regularity of a real analytic (local) Lie group~$G$
and regularity of a complexification~$G_\C$.
Moreover, it naturally applies to
projective limits of Lie groups
and many other situations (see Section~\ref{cxpl}).\\[4mm]
{\bf Known examples of regular Lie groups.}
We recall that every finite-dimensional Lie group
(and every Banach-Lie group) is $C^0$-regular~\cite{SUR}.
Thus $C^\ell(K,H)$ is $C^0$-regular
for each $\ell\in \N_0$, Banach-Lie group~$H$ and compact smooth manifold~$K$.
The group of compactly supported
diffeomorphisms of a finite-dimensional
smooth manifold~$M$ (cf.\ \cite{Mic}) is
$C^0$-regular (as a special case of results
in~\cite{Alx};
cf.\ \cite{OMO} if $M$ is compact).
Many more examples are known.
For instance,
countable direct limits of finite-dimensional
Lie groups are $C^1$-regular~\cite{FUN}.
Criteria for the $C^0$-regularity and $C^1$-regularity
of ascending unions of Banach-Lie groups
can be found in~\cite{Da2}.
In the case of Fr\'{e}chet-Lie groups,
the notion of $C^\infty$-regularity
used here is equivalent to regularity in the
sense of convenient differential calculus,
as discussed in~\cite{KaM}, \cite{KMR}, \cite{MaT}
and applied there to many Lie groups of that
setting.\\[4mm]
{\bf Application: New examples of regular Lie groups.}
The techniques provided in this article
can be used
to establish regularity properties for the following
examples of Lie groups (partly announced in~\cite{OWR}):
\begin{itemize}
\item[(a)]
$C^\ell(K,H)$ is $C^k$-regular,
for all $k,\ell\in \N_0\cup\{\infty\}$,
each compact smooth manifold~$K$ and each
$C^k$-regular Lie group~$H$ (see Section~\ref{gauge}).
\item[(b)]
In~\cite{MEA},
the weak direct product
\[
\bigoplus_{j\in J}G_j:=\{(x_j)_{j\in J}\in\prod_{j\in J}
G_j\colon \mbox{$x_j=e$
for almost all $j\in J$}\}
\]
was made a Lie group modelled on the locally convex direct
sum\linebreak
$\bigoplus_{j\in J}L(G_j)$.
If $J$ is countable,
then the weak direct product is $C^k$-regular if each $G_j$ is $C^k$-regular,
and $k\in \N_0$ is finite (Proposition~\ref{weakreg}).
If $J$ is uncountable and each $G_j$ is $C^k$-regular with $k\in \N_0$,
then $\bigoplus_{j\in J}G_j$ is $C^{k+1}$-regular
(Corollary~\ref{alsodoch}).
\item[(c)]
Using (b) and Theorem~G, we see that
the test function group
$C^\ell_c(M,H)$ (see \cite{QUO}, cf.\ \cite{Alb} and \cite{Mic})
is $C^k$-regular,
for all $\ell\in \N_0\cup\{\infty\}$,
$k\in\N_0$,
each $\sigma$-compact finite-dimensional
smooth manifold~$M$ and each
$C^k$-regular Lie group~$H$
(if $M$ is merely paracompact, then $C^\ell_c(M,H)$ is $C^{k+1}$-regular).
%
%
Likewise, gauge groups $\Gau_c(P)$
are $C^k$-regular for suitable
principal bundles $P\to M$
with structure group~$H$,
and so is the Lie group
$\Aut_c(P)$
of compactly supported symmetries as
constructed in~\cite{Sch}\footnote{See already \cite{Woc}
for the case of compact~$M$.}
(see Section~\ref{gauge}),
which is an extension
\[
\{1\}\to \Gau_c(P)\to \Aut_c(P)\to \Diff_c(M)_P\to\{1\}
\]
for a suitable open subgroup $\Diff_c(M)_P\sub \Diff_c(M)$.
\item[(d)]
For $H$ a regular Lie group,
Neeb and Wagemann~\cite{NaW}
constructed a regular Lie group structure on $C^\infty(\R,H)$.
Let $k\in \N_0\cup\{\infty\}$.
Using a projective limit argument (based on Theorem~G),
in Example~\ref{Wage} we give short proof for the fact that
$C^\infty(\R,H)$ is $C^k$-regular if~$H$ is $C^k$-regular
(as first shown in \cite[Corollary 144]{Alz} as part of a larger theory).
\item[(e)]
If $H$ is a finite-dimensional Lie group with compact Lie algebra
and $\cW$ a suitable set of functions $f\colon \R\to [0,\infty[$
(e.g., $\cW=\{f_a\colon a>0$ with $f_a(t):=e^{-a|t|}$),
then a Lie group $C^\infty_\cW(\R,H)$ can be constructed
whose Lie algebra is the weighted function space
$C^\infty_\cW(\R,L(H))$ (see~\cite{Pie}).\footnote{Note that,
in contrast to
classical constructions
of mapping groups, $C^\infty(\R,L(H))$
and $C^\infty_\cW(\R,L(H))$ contain
unbounded functions.}
Using Theorem~D and applying (d) and Theorem~F
to the inclusion\footnote{Or to the
family $(\alpha_t)_{t\in \R}$ of point evaluations
$\alpha_t\colon C^\infty_\cW(\R,H)\to H$, $\alpha_t(\gamma):=\gamma(t)$.}
\[
\alpha\colon C^\infty_\cW(\R,H)\to C^\infty(\R,H),
\]
one deduces that~$C^\infty_\cW(\R,H)$
is $C^0$-regular
(the author, work in progress).
\item[(f)]
If $M$ is a real analytic manifold
modeled on a metrizable locally convex space,
$M$ is regular as a topological space,
$K\sub M$ a compact non-empty subset
and~$H$ a real Banach-Lie group,
then the group $\Germ(K,H)$ of germs of $H$-valued
real analytic maps around~$K$
can be made a real analytic Lie group
(see \cite{DGS}; the case that~$X$
is a metrizable locally convex space was
already treated in~\cite{GEM}).
Using complexifications (as mentioned after Theorem~G)
and techniques from~\cite{Da2},
one finds that $\Germ(K,H)$
is $C^0$-regular and has a real analytic
evolution map~$\Evol$ (see \cite{DGS};
the case that~$M$ is a Banach space
was already treated in~\cite{Da2}).
In particular, the Lie group
\[
C^\omega(K,H)
\]
of real analytic $H$-valued maps on a
compact real
analytic manifold~$K$ is $C^0$-regular with real analytic
evolution, for
each Banach-Lie group~$H$.
Also, $\Germ([{-n},n],H)$ (with $M:=\R$)
is $C^0$-regular with real analytic evolution
(as another special case).
\item[(g)]
Varying ideas from~\cite{NaW},
the group $C^\omega(\R,H)$
of~$H$-valued real analytic maps
on~$\R$ can be made a Lie group~\cite{DGS},
for each Banach-Lie group~$H$.
Then
\[
C^\omega(\R,H)=\pl\,\Germ([{-n},n],H)\,.
\] 
Combining (f) with
a projective limit argument based on Theorem~G,
it is shown in \cite{DGS} that
$C^\omega(\R,H)$ is $C^0$-regular.
\item[(h)]
The Lie group $\Diff^\omega(M)$
of real analytic diffeomorphisms
of a compact real analytic manifold $M$
is $C^1$-regular
(Corollary~\ref{anaDiffreg}).
Indeed, we have a general result
ensuring $C^1$-regularity
for suitable Lie groups modelled on Silva-spaces
(Proposition~\ref{C1regSilva});
Dahmen and Schmeding~\cite{DaS}
were able to verify the hypotheses
of the proposition.
\end{itemize}
Weakened topologies (like the $L^1$-topology on $C([0,1], \cg)$)
play a role in some of our arguments,
and paved the way for the study of measurable regularity
properties (like $L^1$-regularity based
on $L^1$-curves $\gamma\colon [0,1]\to\cg$)
in subsequent work~\cite{MER}.
\section{Preliminaries and notation}
We write $\N:=\{1,2,\ldots\}$ and $\N_0:=\N\cup\{0\}$.
All locally convex spaces are assumed Hausdorff.
The terms 
``Lie group''
and ``manifold''
refer to Lie groups and manifolds
modelled on locally convex spaces,
as in \cite{RES}, \cite{GaN} and \cite{SUR}.
The notion of $C^k$-map between locally convex
spaces is that of Keller's $C^k_c$-theory
(see \cite{RES}, \cite{Mic}, \cite{Mil}, \cite{SUR}
for expositions in varying generality),
as extended in \cite{GaN} to the
case of maps on subsets of locally convex spaces which have dense interior and are locally convex
(i.e., each point has a relatively open, convex neighbourhood):
\begin{numba}
Let $E$ and $F$ be locally convex spaces,
$r\in \N_0\cup\{\infty\}$
and $U\sub E$ be a subset.
If $U$ is open, then a map $f\colon U \to F$
is called $C^r$ if the iterated directional derivatives
\[
d^if(x,v_1,\ldots, v_i):=(D_{v_i}\cdots D_{v_1}f)(x)
\]
exist for all $i\in \N_0$ with $i\leq r$
and $x\in U$, $v_1\ldots, v_i\in E$,
and $d^if\colon U\times E^i\to F$ is continuous.
If $U\sub E$ is a locally convex subset with dense interior $U^0$,
we say that
$f\colon U\to F$ is $C^r$ if $f|_{U^0}$ is $C^r$
and the maps $d^i(f|_{U^0})$ admit a continuous extension
$d^if\colon U \times E^i\to F$
for each $i$ as before. It is known that $f$ is $C^{r+1}$
if and only if $f$ is $C^1$ and $df$ is $C^r$.
\end{numba}
\begin{numba}
In \cite{GaN},
one finds the concept of a $C^r$-manifold $M$
\emph{with rough boundary}, modelled on a locally convex space~$E$.
In contrast to an ordinary manifold,
the charts $\phi\colon U\to V$ of such~$M$
are $C^r$-diffeomorphisms
from an open subset $U$ of~$M$
to a locally convex subset $V\sub E$
with dense interior.
Manifolds with $C^r$-boundary or
corners are obtained as
special cases.
\end{numba}
We shall also need $C^{r,s}$-maps (as developed in~\cite{Alz}
and \cite{AS}):
\begin{numba}
Let $E_1$, $E_2$ and $F$ be locally convex spaces,
$r,s\in \N_0\cup\{\infty\}$
and $U\sub E_1$, $V\sub E_2$ be subsets.
If $U$ and $V$ are open, then a map $f\colon U\times V\to F$
is called $C^{r,s}$ if the iterated directional derivatives
\[
d^{i,j}f(x,y,v_1,\ldots,v_i,w_1,\ldots,w_j):=
(D_{(v_i,0)}\cdots D_{(v_1,0)}D_{(0,w_j)}\cdots D_{(0,w_1)}f)(x,y)
\]
exist for all $i,j\in \N_0$ with $i\leq r$, $j\leq s$
and $(x,y)\in U\times V$, $v_1\ldots, v_i\in E_1$,
$w_1,\ldots, w_j\in E_2$, and the map
\begin{equation}\label{soonext}
d^{i,j}f\colon U\times V\times E_1^i\times E_2^j\to F
\end{equation}
so obtained is continuous.
If $U$ and $V$ are merely locally convex subsets with dense
interior, we say that $f$ is $C^{r,s}$ if $f|_{U^0\times V^0}$
is $C^{r,s}$ and each of the maps $d^{i,j}(f|_{U^0\times V^0})$
has a continuous extension to a mapping as in (\ref{soonext}).
\end{numba}
\begin{numba}\label{baprels} We recall from \cite{AS}:
\begin{itemize}
\item[(a)]
A mapping to a (finite or infinite)
product of locally convex spaces is $C^{r,s}$
if and only if all of its components are $C^{r,s}$.
\item[(b)]
If $g$ is a $C^{r+s}$-map
and $f$ a $C^{r,s}$-map taking its values in
the domain of~$g$, then also $g\circ f$ is $C^{r,s}$.
\item[(c)]
If $f$ is a $C^{r,s}$-map, $g_1$ a $C^r$-map and $g_2$ a $C^s$-map
such that $g_1\times g_2$ takes its values in the domain of $f$,
then $f\circ(g_1\times g_2)$ is $C^{r,s}$.
\item[(d)]
If $E_1$, $E_2$ and $F$ are locally convex spaces, $U\sub E_1$ a locally convex subset with dense
interior and $f\colon U \times E_2\to F$ a $C^{r,0}$-map
such that $f(x,.)\colon E_2\to F$ is linear for each $x\in U$,
then $f$ is $C^{r,\infty}$.
\item[(e)]
If $E_1$, $E_2$ and $F$ are locally convex spaces, $U\sub E_1$ and $V\sub E_2$ locally
convex subsets with dense interior and $f\colon U\times V\to F$
a $C^r$-map,
then $f$ is $C^{r,0}$ and $C^{0,r}$.
\item[(f)]
If $E_1$, $E_2$ and $F$ are locally convex spaces, $U\sub E_1$ and $V\sub E_2$ locally
convex subsets with dense interior and
$f\colon U\times V\to F$ is $C^{r,s}$,
then $f^\vee(x):=f(x,.)\colon V\to F$ is a $C^s$-map
for each $x\in U$, and the map
$f^\vee\colon U\to C^s(V,F)$ is $C^r$.
\end{itemize}
The following is obvious:
\begin{itemize}
\item[(g)]
If $E_1$, $E_2$ and $F$ are locally convex spaces, $U\sub E_1$ and $V\sub E_2$ locally
convex subsets with dense interior and
$g\colon E_2\supseteq V\to F$ is $C^s$, then $f\colon U\times V\to F$,
$f(x,y):=g(y)$ is $C^{\infty,s}$.
\end{itemize}
\end{numba}
\begin{numba}\label{defcrsman}
If $M_1$, $M_2$ and $N$ are smooth manifolds
modelled on locally convex spaces (possibly with rough boundary)
and $r,s\in \N_0\cup\{\infty\}$,
then a map $f\colon M_1\times M_2\to N$
is called $C^{r,s}$ if it is continuous
and
\begin{equation}\label{ngd1}
\phi\circ f\circ (\phi_1\times \phi_2)^{-1}\:\:\mbox{is $C^{r,s}$}
\end{equation}
for all charts $\phi_1\colon U_1\to V_1$,
$\phi_2\colon U_2\to V_2$
and
$\phi\colon U\to V$ of $M_1$, $M_2$ and $N$, respectively,
such that
\begin{equation}\label{ngd2}
f(U_1\times U_2)\sub U.
\end{equation}
\end{numba}
\begin{numba}\label{nwintr}
In view of \ref{baprels} (b) and (c),
$f$ is $C^{r,s}$ if and only if for all $(p_1,p_2)\in M_1\times M_2$,
there exist $\phi_1$, $\phi_2$ and $\phi$
with (\ref{ngd1}) and (\ref{ngd2})
such that $p_1\in U_1$ and $p_2\in U_2$.
\end{numba}
\begin{numba}\label{preexpo}
\emph{If $f\colon M_1\times M_2\to N$} (\emph{as in} \ref{defcrsman})
\emph{is $C^{r,s}$, then $f^\vee(p_1):=f(p_1,.)\colon M_2\to N$
is $C^s$ for each $p_1\in M_1$.}\\[2.4mm]
To see this, let $p_2\in M_2$ and pick charts $\phi_1$, $\phi_2$
and $\phi$ with $p_1\in U_1$ and $p_2\in U_2$ as in \ref{nwintr}.
Thus $g:=\phi\circ f\circ (\phi_1\times \phi_2)^{-1}\colon V_1\times V_2\to V$
is a $C^{r,s}$-map.
Then $g^\vee(\phi_1(p_1))\colon V_2\to V$ is $C^s$,
by \ref{baprels}\,(f).
As a consequence, also $f^\vee(p_1)|_{U_2}=\phi^{-1}\circ g^\vee(\phi_1(p_1))\circ \phi_2$ is~$C^s$.
Since $p_2$ was arbitrary, it follows that $f^\vee(p_1)$ is~$C^s$.
\end{numba}
\begin{numba}\label{defmagp}
If $G$ is a Lie group
and $r\in \N_0\cup\{\infty\}$, then also $C^r([0,1],G)$
is a Lie group, as is well known.
If $\phi\colon U\to V\sub E$ is a chart for $G$
such that $e\in U$ and $\phi(e)=0$,
then $C^r([0,1],U)$ is open in $C^r([0,1],G)$, the set
$C^r([0,1],V)$ is open in $C^r([0,1],E)$,
and the map $\phi_* \colon C^r([0,1],U)\to C^r([0,1],V)$, $\gamma\mto \phi\circ\gamma$ is a $C^\infty$-diffeomorphism
(see, e.g.,  \cite{GaN}; cf.\ \cite{QUO}).
\end{numba}
\begin{numba}\label{defmastar}
Note that $C^r_*([0,1],E):=
\{\gamma\in C^r_*([0,1],E)\colon \gamma(0)=0\}$ is a closed
vector subspace of $C^r_*([0,1],E)$ in the preceding situation.
Since $\phi_*$ takes
$C^r([0,1],U)\cap C^r_*([0,1],G)$ onto $C^r([0,1],V)\cap C^r_*([0,1],E)$,
we see that\linebreak
$C^r_*([0,1],G)$ is a Lie subgroup
of $C^r([0,1],G)$ modelled
on the closed vector subspace
$C^r_*([0,1],E)$.
This implies that a map to $C^r_*([0,1],G)$ 
is smooth if and only if
it is smooth as a map to $C^r([0,1],G)$
(cf.\ \cite[Lemma~10.1]{BGN}).
\end{numba}
\begin{numba}\label{mapsugp}
If $H\sub G$ is a Lie subgroup
modelled on a closed vector subspace,
then so is $C^r([0,1],H)\sub C^r([0,1],G)$
(entailing that a map to $C^r([0,1],H)$ is smooth if and only if it is smooth as a map
to $C^r([0,1],G)$).\\[2.4mm]
In fact,
the chart $\phi$ in \ref{defmagp}
can be chosen such that $\phi(H\cap U)=F\cap V$
for some closed vector subspace $F\sub E$.
Since $\phi_*$ takes
$C^r([0,1],U)\cap C^r([0,1],H)$ onto $C^r([0,1],V)\cap C^r([0,1],F)$,
we see that $C^r([0,1],H)$ is a Lie subgroup
modelled
on the closed vector subspace
$C^r_*([0,1],F)$.
\end{numba}
\begin{numba}\label{embeddings}
Let $G$ be a Lie group and $\mu\colon G\times G\to G$
be the group multiplication.
Then $TG$ is a Lie group with respect to the multiplication
\[
T\mu\colon T(G\times G)\to TG,
\]
identifying the left hand side with
$TG\times TG$. The zero section
$\theta\colon G\to TG$, $g\mto 0 \in T_g(G)$ is a smooth group
homomorphism, and actually an isomorphism onto a Lie subgroup
modelled on a closed vector subspace.
Hence also
$\theta_*\colon C^r([0,1],G)\to C^r([0,1],TG)$, $\gamma\mto \theta\circ \gamma$
is an isomorphism onto such a Lie subgroup, by~\ref{mapsugp}.
Likewise, the inclusion map $\alpha \colon L(G)\to TG$
is an isomorphism onto a Lie subgroup modelled on a closed vector subspace,
and hence so is
$\alpha_*\colon C^r([0,1],L(G))\to C^r([0,1],TG)$,
$\gamma\mto \alpha\circ \gamma$.
This will be useful later.
\end{numba}
\begin{numba}
If $M$ is a manifold, $E$ a locally convex space
and $f\colon M\to E$
a $C^1$-map, we identify $TE$ with $E\times E$ as usual
and let $df\colon TM\to E$ be the second component
of $Tf\colon TM\to E\times E$. Thus $Tf(v)=(f(p),df(v))$ if $v\in T_pM$.
\end{numba}
%
%
%
%
%
%
%
%
%
%
%
%
%
%
%
\begin{numba}
Recall that the left logarithmic derivative
$\delta^\ell(\eta)\colon [0,1]\to \cg$
of a $C^1$-curve $\eta\colon [0,1]\to G$
in a Lie group~$G$ is defined via
\[
\delta^\ell(\eta)(t):=\eta(t)^{-1}.\eta'(t)
\quad\mbox{for $\,t\in [0,1]$.}
\]
Thus $\eta=\Evol(\gamma)$ if and only if
$\delta^\ell(\eta)=\gamma$ and $\eta(0)=e$.
\end{numba}
\begin{numba}
If $\eta\in C^1([0,1],G)$
with $\delta^\ell(\eta)=\gamma\in C^k([0,1],G)$,
then $\eta\in C^{k+1}([0,1],G)$
by a simple induction on~$k$,
using that $\eta'=\eta.\gamma=T\mu\circ (\theta\circ \eta,\alpha\circ
\gamma)$ (with $\mu,\theta,\alpha$
as in \ref{embeddings})
is $C^k$
if~$\eta$ is~$C^k$.
\end{numba}
The following three facts are well known and easy to check.
\begin{numba}\label{simplsimp}
If $G$ is a Lie group
and $\gamma\in C^0([0,1],\cg)$
has an evolution $\Evol(\gamma)\in C^1_*([0,1],G)$, then
also $\gamma_s\colon [0,1]\to \cg$, $\gamma_s(t):=\gamma(st)$
has an evolution for
for each $s\in [0,1]$, given by
\[
\Evol(\gamma_s)(t)=\Evol(\gamma)(st)\quad\mbox{for all $t\in [0,1]$.}
\]
In particular,
\[
\Evol(\gamma)(s)=\evol(s\gamma_s).
\]
This follows from $\frac{d}{dt}\Evol(\gamma)(st)
=s\Evol(\gamma)(st).\gamma(st)$.
\end{numba}
\begin{numba}\label{evolandhoms}
If $\alpha\colon G\to H$ is a smooth homomorphism between
Lie groups and $\eta\in C^1([0,1],G)$, then
\[
\delta^\ell(\alpha\circ\eta)=L(\alpha)\circ \delta^\ell(\eta).
\]
Hence, if $\gamma\in C^0([0,1],L(G))$ such that
$\Evol(\gamma)\in C^1_*([0,1],G)$ exists,
then the evolution of $L(\alpha)\circ\gamma\in C^0([0,1],L(H))$
exists and is given by
\[
\Evol(L(\alpha)\circ \gamma)=\alpha\circ \Evol(\gamma)\in C^1([0,1],H).
\]
\end{numba}
\begin{numba}\label{Adalpha}
If $\alpha\colon G\to H$ is a smooth
homomorphism between Lie groups,
then
\[
\alpha(\Ad_G(g).x)=\Ad_H(\alpha(g)).L(\alpha)(x)
\]
for all $g\in G$ and $x\in L(G)$.
\end{numba}
\begin{numba}
Given a Lie group $G$ and $g\in G$,
we consider the right translation $\rho_g\colon G\to G$, $x\mto xg$
and define
\[
v.g:=T\rho_g(v)\quad \mbox{for $v\in TG$.}
\]
The right logarithmic derivative
of a $C^1$-curve $\gamma\colon [0,1]\to G$
is the continuous curve $\delta^r(\gamma)\colon [0,1]\to\cg$,
$\delta^r(\gamma)(t):=\gamma'(t).\gamma(t)^{-1}$.
It determined $\gamma$ up to multiplication
with a group element on the right.
\end{numba}
We shall use the same notation for logarithmic derivatives and
evolution maps for curves defined on non-degenerate intervals
$J\sub \R$ (in place of $[0,1]$) such that $0\in J$.
\section{Auxiliary results}
We first establish differentiability properties
for some relevant maps.
\begin{la}\label{aresmooth}
Let $G$ be a Lie group and
$k\in \N_0\cup\{\infty\}$.
Then the map
\[
D\colon C^{k+1}([0,1],G)\to C^k([0,1], TG),\quad
\gamma\mto \gamma'
\]
is a smooth group homomorphism,
and also the following map is smooth:
\[
\delta^\ell \colon C^{k+1}([0,1],G)\to C^k([0,1], L(G)),\quad
\gamma\mto \delta^\ell(\gamma).
\]
\end{la}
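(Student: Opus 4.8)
The plan is to handle the two maps in turn, deriving the smoothness of $\delta^\ell$ from that of $D$. For the homomorphism $D$ I would first record the \emph{algebraic} statement. Since multiplication in $C^{k+1}([0,1],G)$ and in $C^k([0,1],TG)$ is pointwise, and the group structure on $TG$ is given by $T\mu$ (see \ref{embeddings}), the chain rule yields
\[
(\gamma\eta)'(t)=T\mu(\gamma'(t),\eta'(t))
\]
under the identification $T(G\times G)\isom TG\times TG$, so that $D(\gamma\eta)=D(\gamma)D(\eta)$; thus $D$ is a group homomorphism. This is the structural observation that makes the smoothness tractable: for a homomorphism between Lie groups it suffices to establish smoothness on a neighbourhood of the identity, because near an arbitrary $\gamma_0$ one has $D=\lambda_{D(\gamma_0)}\circ D\circ\lambda_{\gamma_0^{-1}}$, where $\lambda_{\gamma_0^{-1}}$ is a diffeomorphism onto a neighbourhood of the constant curve~$e$.

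Near $e$ I would compute in charts. Choosing a chart $\phi\colon U\to V\sub E$ of $G$ with $\phi(e)=0$, the map $\phi_*$ is a chart of $C^{k+1}([0,1],G)$ around $e$ (\ref{defmagp}), while $T\phi\colon TU\to V\times E$ induces a chart $(T\phi)_*$ of $C^k([0,1],TG)$; moreover $D$ carries $C^{k+1}([0,1],U)$ into $C^k([0,1],TU)$. A chain-rule computation identifies the local representative $(T\phi)_*\circ D\circ(\phi_*)^{-1}$ with the map
\[
C^{k+1}([0,1],V)\to C^k([0,1],V\times E),\qquad c\mto (c,c'),
\]
whose two components are the (continuous linear) inclusion $C^{k+1}\emb C^k$ and the (continuous linear) differentiation operator $C^{k+1}([0,1],E)\to C^k([0,1],E)$. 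Hence the local representative is the restriction of a continuous linear map, and so is smooth; therefore $D$ is smooth near $e$, and by the preceding reduction smooth everywhere.

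For $\delta^\ell$ I would factor it through $D$. Writing $\rho\colon TG\to L(G)$ for the second component of the left trivialization $TG\to G\times L(G)$, $v\mto(g,g^{-1}.v)$ for $v\in T_gG$ — a smooth map, since $T\mu$ and inversion are smooth — one has $\delta^\ell(\gamma)(t)=\gamma(t)^{-1}.\gamma'(t)=\rho(D(\gamma)(t))$, that is, $\delta^\ell=\rho_*\circ D$. The push-forward $\rho_*\colon C^k([0,1],TG)\to C^k([0,1],L(G))$ is smooth because $\rho$ is smooth (smoothness of push-forwards of smooth maps between manifolds; cf.\ \cite{GaN}), and $D$ is smooth by the first part, so $\delta^\ell$ is smooth as a composite. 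The one step demanding genuine care is the smoothness of $D$ at~$e$: everything hinges on recognising that, in charts, $D$ linearises to $c\mto(c,c')$, together with the continuity of differentiation as a linear map between the mapping spaces. Once this is in hand, the homomorphism reduction and the push-forward lemma are purely formal.
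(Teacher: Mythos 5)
Your proof is correct. For the map $D$ it follows the paper's argument essentially verbatim: the homomorphism property from $(\gamma\eta)'=T\mu\circ(\gamma',\eta')$, the reduction of smoothness to an identity neighbourhood, and the identification of the local representative in the charts $\phi_*$, $(T\phi)_*$ with $c\mto(c,c')$, a restriction of a continuous linear map built from the inclusion $C^{k+1}([0,1],E)\to C^k([0,1],E)$ and the differentiation operator $D_E$; if anything, your formulation of this step is more precise than the paper's shorthand $(T\phi)_*\circ D|_\Omega=D_E\circ\phi_*$. Where you genuinely diverge is in the treatment of $\delta^\ell$: you factor $\delta^\ell=\rho_*\circ D$ through the smooth map $\rho\colon TG\to L(G)$, $v\mto\pi(v)^{-1}.v$ (the second component of the left trivialization) and then invoke the general fact that the pushforward $f_*\colon C^k([0,1],M)\to C^k([0,1],N)$ of a smooth map $f$ between manifolds is smooth. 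The paper unfolds the same pointwise identity $\gamma(t)^{-1}.\gamma'(t)=T\mu\bigl(\theta(\sigma(\gamma(t))),\gamma'(t)\bigr)$ at the level of the mapping groups instead, writing $\alpha_*\circ\delta^\ell=(T\mu)_*\circ(\theta_*\circ\sigma_*\circ\Lambda,D)$, where $\sigma_*$ and $(T\mu)_*$ are the inversion and multiplication of the Lie groups $C^k([0,1],G)$ and $C^k([0,1],TG)$, $\Lambda$ is the inclusion $C^{k+1}([0,1],G)\to C^k([0,1],G)$, and $\theta_*$, $\alpha_*$ are the closed-subgroup embeddings of \ref{embeddings} and \ref{mapsugp}; every factor is then smooth simply because it is a Lie group operation, a smooth homomorphism, or a subgroup inclusion, so no general superposition lemma is needed. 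Your route buys brevity, at the cost of two facts you must import: smoothness of pushforwards of smooth maps on $C^k([0,1],\cdot)$ (true, and available in \cite{GaN}), and smoothness of $\rho$ as a map into $L(G)$ itself rather than merely into $TG$ --- which holds because the left trivialization $TG\to G\times L(G)$ is a diffeomorphism, but is exactly the point the paper handles explicitly by first establishing smoothness into $C^k([0,1],TG)$ and only then passing to $C^k([0,1],L(G))$ via \ref{embeddings}.
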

\begin{proof}
Let $E$ be the modelling space for $G$.
The map
\[
D_E\colon C^{k+1}([0,1],E)\to C^k([0,1],E),\quad \gamma\mto \gamma'
\]
is continuous linear (see \cite{AS};
cf.\ also \cite[Lemma A.1\,(d)]{ZOO}).
If $\mu\colon G\times G\to G$ is the group multiplication in $G$,
then the map $(\gamma,\eta)\mto
\gamma\eta=\mu\circ (\gamma,\eta)$ is the multiplication
in $C^{k+1}([0,1],G)$.
Now $(\gamma\eta)'(t)=T\mu(\gamma'(t),\eta'(t))$
for $\gamma,\eta\in
C^{k+1}([0,1],G)$ and $t\in [0,1]$.
Hence $(\gamma\eta)'=T\mu\circ (\gamma',\eta')$,
which is the product of $\gamma'$ and $\eta'$ in
$C^k([0,1],TG)$.
Thus $D$ is a homomorphism of groups,
and hence it will be smooth if we can show its smoothness
on the open identity neighbourhood
$\Omega:=C^{k+1}([0,1],U)$
for some chart $\phi\colon U\to V\sub E$
around~$e$, with $\phi(e)=0$.
Then
$\phi_* \colon C^{k+1}([0,1],U)\to C^{k+1}([0,1],V)$
is a chart for $C^{k+1}([0,1],G)$ and
$(T\phi)_* \colon C^k([0,1],TU)\to C^k([0,1],V\times E)$,
$\gamma\mto (T\phi)\circ\gamma$ a chart for
$C^k([0,1], TG)$.
Since
\[
(T\phi)_*\circ D|_\Omega = D_E\circ \phi_*
\]
(as $T\phi(\gamma'(t))=(\phi\circ\gamma)'(t)$),
we see that $D|_\Omega=
((T\phi)_*)^{-1} \circ D_E\circ \phi_*$ is smooth. Hence $D$ is a smooth homomorphism.\\[2.4mm]
Let $\Lambda\colon C^{k+1}([0,1],G)\to C^k([0,1],G)$
be the inclusion map (which is a smooth homomorphism),
$\sigma\colon G\to G$, $g\mto g^{-1}$
be the inversion map
and $\sigma_*\colon
C^k([0,1],G)\to C^k([0,1],G)$, $\gamma\mto \sigma\circ \gamma=\gamma^{-1}$
be the smooth inversion map in $C^k([0,1],G)$.
Let $\theta\colon G\to TG$,
$\alpha \colon L(G)\to TG$
as well as $\theta_*\colon C^k([0,1],G)\to C^k([0,1],TG)$
and $\alpha_*\colon C^k([0,1],L(G))\to C^k([0,1],TG)$
be as in~\ref{embeddings}.
Finally, let
\[
(T\mu)_*\colon
C^k([0,1],TG)\times C^k([0,1],TG)\to C^k([0,1],TG),\quad
(\gamma,\eta)\mto T\mu\circ (\gamma,\eta)
\]
be the smooth group multiplication of $C^k([0,1],TG)$.
Then
\[
\delta^\ell(\gamma)=(T\mu)_*( \theta_*(\sigma_*(\gamma)),D(\gamma))
\]
for $\gamma\in C^{k+1}([0,1],G)$ and thus
\[
\alpha_*\circ \delta^\ell=(T\mu)_*\circ (\theta_*\circ \sigma_*\circ \Lambda, D),
\]
with $D$ as in Lemma~\ref{ini1}. Thus
$\alpha_*\circ \delta^\ell$ is smooth, i.e., $\delta^\ell$ is smooth as a map to
$C^k([0,1],TG)$.
Then $\delta^\ell$ is also smooth
as a map to $C^k([0,1],L(G))$ (see \ref{embeddings}),
which completes the proof.
\end{proof}
It will be useful later that the $C^r$-manifold structure
on $C^{k+1}([0,1],G)$ is initial
with respect to suitable maps.
\begin{la}\label{ini1}
Let $G$ be a Lie group,
$k\in \N_0$,
$r\in \N_0\cup\{\infty\}$,
\[
D\colon C^{k+1}([0,1],G)\to C^k([0,1], TG),\quad
\gamma\mto \gamma'
\]
and
$\iota\colon C^{k+1}([0,1],G)\to C([0,1],G)$, $\gamma\mto\gamma$
be the inclusion map.
If
$M$ is a $C^r$-manifold and
$f\colon M\to C^{k+1}([0,1],G)$
a map such that both $\iota\circ f$ and $D\circ f$ are $C^r$,
then $f$ is $C^r$.
\end{la}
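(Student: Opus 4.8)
The plan is to check $C^r$-ness of $f$ locally on $M$ and, by passing to charts, to reduce everything to a linear statement about the modelling space $E$ of $G$. Fix $p_0\in M$ and put $g_0:=f(p_0)\in C^{k+1}([0,1],G)$. Since left translation by the constant $g_0^{-1}$ is a diffeomorphism of the Lie group $C^{k+1}([0,1],G)$, the map $f$ is $C^r$ near $p_0$ if and only if $h\colon M\to C^{k+1}([0,1],G)$, $h(w):=g_0^{-1}f(w)$ (pointwise product) is, and now $h(p_0)=e$ is the constant curve. As $\iota$ and $D$ are group homomorphisms (for $D$ by Lemma~\ref{aresmooth}), I have $\iota\circ h=\iota(g_0)^{-1}\cdot(\iota\circ f)$ and $D\circ h=D(g_0)^{-1}\cdot(D\circ f)$; since left translations by constants are smooth in $C([0,1],G)$ and $C^k([0,1],TG)$, both $\iota\circ h$ and $D\circ h$ are again $C^r$. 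Renaming $h$ as $f$, I may therefore assume $g_0=e$.

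Next I would pick a chart $\phi\colon U\to V\sub E$ of $G$ with $e\in U$ and $\phi(e)=0$, yielding the chart $\phi_*\colon \Omega:=C^{k+1}([0,1],U)\to C^{k+1}([0,1],V)$ of $C^{k+1}([0,1],G)$ around~$e$ as in~\ref{defmagp}. Because $\Omega=\iota^{-1}(C([0,1],U))$ with $C([0,1],U)$ open and $\iota\circ f$ continuous with $\iota(f(p_0))\in C([0,1],U)$, there is a neighbourhood $W$ of $p_0$ with $f(W)\sub\Omega$, and it suffices to show that $\phi_*\circ f|_W$ is $C^r$. For this I would use the two chart identities
\[
\iota_E\circ\phi_*=\phi_*^{(0)}\circ\iota|_\Omega,\qquad D_E\circ\phi_*=\pr_2\circ(T\phi)_*\circ D|_\Omega ,
\]
where $\iota_E\colon C^{k+1}([0,1],E)\to C([0,1],E)$ is the inclusion, $D_E(\gamma):=\gamma'$, $\phi_*^{(0)}\colon C([0,1],U)\to C([0,1],V)$ and $(T\phi)_*\colon C^k([0,1],TU)\to C^k([0,1],V\times E)$ are the charts of $C([0,1],G)$ and $C^k([0,1],TG)$, and $\pr_2$ is induced by the linear projection $V\times E\to E$; the second identity is the one already exploited in the proof of Lemma~\ref{aresmooth}, via $T\phi(\gamma'(t))=(\phi(\gamma(t)),(\phi\circ\gamma)'(t))$. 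Since $\iota\circ f|_W$ takes values in $C([0,1],U)$ and $D\circ f|_W$ in $C^k([0,1],TU)$, composing these $C^r$-maps with the smooth maps $\phi_*^{(0)}$ and $\pr_2\circ(T\phi)_*$ shows that $\iota_E\circ(\phi_*\circ f|_W)$ and $D_E\circ(\phi_*\circ f|_W)$ are both $C^r$.

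It then remains to recover $C^r$-ness of a map into $C^{k+1}([0,1],E)$ from $C^r$-ness of its composites with $\iota_E$ and $D_E$, which is the linear core of the lemma. The map $j:=(\iota_E,D_E)\colon C^{k+1}([0,1],E)\to C([0,1],E)\times C^k([0,1],E)$ is continuous linear and a topological embedding, because the seminorms defining the $C^{k+1}$-topology are exactly those controlling $\gamma$ together with $\gamma',\dots,\gamma^{(k+1)}$. Granting that $\im(j)$ is closed, $j^{-1}$ on $\im(j)$ is continuous linear, hence smooth; $C^r$-ness of $j\circ(\phi_*\circ f|_W)$ into the product (equivalent to $C^r$-ness of both components, established above) descends to $C^r$-ness into the closed subspace $\im(j)$, and composing with $j^{-1}$ gives that $\phi_*\circ f|_W$ is $C^r$. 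As $p_0$ was arbitrary, $f$ is $C^r$. The step I expect to be the main obstacle is precisely the closedness of $\im(j)$: one must show that whenever a net $(\gamma_\nu,\gamma_\nu')$ converges in $C([0,1],E)\times C^k([0,1],E)$ to some $(\delta,\eta)$, the curve $\delta$ is $C^{k+1}$ with $\delta'=\eta$. This is a fundamental-theorem-of-calculus estimate: for each continuous seminorm $p$ one bounds $p\bigl(\tfrac1h(\gamma_\nu(t+h)-\gamma_\nu(t))-\eta(t)\bigr)$ by $\sup_s p(\gamma_\nu'(s)-\eta(s))+\sup_{|s-t|\le|h|}p(\eta(s)-\eta(t))$, lets $\nu\to\infty$ (using $\gamma_\nu\to\delta$) and then $h\to0$. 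Crucially, this needs no completeness of $E$, since the only integrals involved are the differences $\gamma_\nu(t+h)-\gamma_\nu(t)$ of values of the given $C^{k+1}$-curves.
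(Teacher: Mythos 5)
Your proposal is correct and takes essentially the same route as the paper's proof: localize by translating $f(p_0)$ to the identity in the Lie group $C^{k+1}([0,1],G)$ (the paper uses right translations where you use left ones), transport everything through the chart $\phi_*$ via the same two chart identities, and reduce to the linear fact that $(\iota_E,D_E)$ is a topological embedding with closed image, combined with the closed-subspace criterion of \cite[Lemmas 10.1 and 10.2]{BGN}. The only noteworthy difference is that the paper simply cites this linear fact from \cite{AS} (cf.\ \cite[Lemma A.1\,(d)]{ZOO}), whereas you prove it; your fundamental-theorem-of-calculus estimate establishing closedness of the image is correct and, as you rightly note, requires no completeness of $E$.
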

\begin{proof}
Step 1. Let $E$ be the modelling space of $G$.
Then the map
\[
(\iota_E, D_E)\colon C^{k+1}([0,1],E)\to C([0,1],E)\times C^k([0,1],E),\quad \gamma\mto(\gamma,\gamma')
\]
is linear and a topological embedding with closed
image (see \cite{AS}; cf.\ also
\cite[Lemma~A.1\,(d)]{ZOO}).
Therefore, a map $h\colon M\to C^{k+1}([0,1],E)$
is $C^r$ if and only if both $\iota_E\circ h$ and $D_E\circ h$
are $C^r$ (cf.\ \cite[Lemmas 10.1 and 10.2]{BGN}).\\[2.4mm]
Step 2.
Let a chart $\phi\colon U\to V\sub E$ for $G$
and the pushforward\linebreak
$\phi_* \colon C^{k+1}([0,1],U)\to C^{k+1}([0,1],V)$
be as in \ref{defmagp}.\\[2mm]
Each $p\in M$ has an open neighbourhood $P\sub M$ such that $f(q)f(p)^{-1}\in U$
for all $q\in P$. As it suffices to show that $f|_P$ is $C^r$, we may assume $P=M$.\linebreak
Consider $\eta:=f(p)^{-1}\in C^{k+1}([0,1],G)$
and
the right translation mappings $\rho_\eta \colon C^{k+1}([0,1],G)\to C^{k+1}([0,1],G)$
and
$R_\eta\colon C([0,1],G)\to C([0,1],G)$
given by  $\gamma\mto \gamma \eta$ (which are smooth).
Then
\[
g:=\rho_\eta \circ f \colon M\to C^{k+1}([0,1],G)
\]
is $C^r$ as a function to $C([0,1],G)$
(as $\iota\circ g=R_\eta \circ \iota\circ f$).
Also, $D\circ g=\rho_{\eta'}\circ D \circ f$
is $C^r$,
where we use the smooth right translation map
\[
\rho_{\eta'}\colon
C^k([0,1],TG)\to C^k([0,1],TG),\quad \gamma\mto \gamma \eta'\, .
\]
Now $h:= \phi_*\circ g\colon M\to
C^{k+1}([0,1],E)$ is
$C^r$ as a map to $C(M,E)$
(as $\iota_E\circ h=\wt{\phi}_*\circ \iota\circ g$
with the $C^\infty$-diffeomorphism
$\wt{\phi}_*\colon
C([0,1],U)\to C([0,1],V)$, $\gamma\mto \phi\circ\gamma$).
Furthermore, $D_E\circ h= (d\phi)_*\circ D\circ g$
is $C^r$,
using that
$(d\phi)_*\colon
C^k([0,1],TU)\to C^k([0,1],E)$ is smooth
as it is the second component of the chart
\[
(T\phi)_*\colon
C^k([0,1],TU)\to C^k([0,1],TV)\isom
C^k([0,1],V)\times C^k([0,1],E).
\]
By Step~1, $h$ is $C^r$.
As a consequence, $g$ is~$C^r$ and
hence also~$f$.
\end{proof}
\begin{la}\label{ini2}
Let $G$ be a Lie group,
$k\in \N_0$, $r\in \N_0\cup\{\infty\}$,
\[
\delta^\ell \colon C^{k+1}([0,1],G)\to C^k([0,1], L(G)),\quad
\gamma\mto \delta^\ell(\gamma)
\]
and $\iota\colon C^{k+1}([0,1],G)\to C([0,1],G)$
be the inclusion map.
If $M$ is a $C^r$-manifold and
$f\colon M\to C^{k+1}([0,1],G)$
a map such that both $\iota\circ f$ and $\delta^\ell \circ f$ are $C^r$,
then $f$ is $C^r$.
\end{la}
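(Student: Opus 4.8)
The plan is to reduce Lemma~\ref{ini2} to Lemma~\ref{ini1} by expressing $D$ in terms of $\delta^\ell$, the key input being the identity $\gamma'=\gamma.\delta^\ell(\gamma)$. First I would record its pointwise form: for a $C^1$-curve $\gamma$ and $t\in[0,1]$,
\[
\gamma(t).\delta^\ell(\gamma)(t)=\gamma(t).\bigl(\gamma(t)^{-1}.\gamma'(t)\bigr)=\gamma'(t),
\]
since $\lambda_{\gamma(t)}\circ\lambda_{\gamma(t)^{-1}}=\id_G$ gives $T_e\lambda_{\gamma(t)}\circ T_{\gamma(t)}\lambda_{\gamma(t)^{-1}}=\id$. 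Rewriting this through the multiplication $T\mu$ on $TG$ (exactly as in the proof of Lemma~\ref{aresmooth}, but in the reverse direction) and using the embeddings $\theta_*,\alpha_*$ from~\ref{embeddings}, I obtain the global identity
\[
D(\gamma)=(T\mu)_*\bigl(\theta_*(\kappa(\gamma)),\,\alpha_*(\delta^\ell(\gamma))\bigr),
\]
where $\kappa\colon C^{k+1}([0,1],G)\to C^k([0,1],G)$ is the (smooth) inclusion, $(T\mu)_*$ is the smooth multiplication of $C^k([0,1],TG)$, and $\theta_*\colon C^k([0,1],G)\to C^k([0,1],TG)$, $\alpha_*\colon C^k([0,1],L(G))\to C^k([0,1],TG)$ are smooth.

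By Lemma~\ref{ini1} it is enough to show that $\iota\circ f$ and $D\circ f$ are $C^r$. The former is a hypothesis. For the latter, the displayed identity yields
\[
D\circ f=(T\mu)_*\circ\bigl(\theta_*\circ\kappa\circ f,\ \alpha_*\circ\delta^\ell\circ f\bigr).
\]
Here $\alpha_*\circ\delta^\ell\circ f$ is $C^r$ because $\delta^\ell\circ f$ is $C^r$ and $\alpha_*$ is smooth, and $(T\mu)_*$ is smooth; so $D\circ f$ will be $C^r$ once $\theta_*\circ\kappa\circ f$ is, for which it suffices that $\kappa\circ f\colon M\to C^k([0,1],G)$ be $C^r$ (then compose with the smooth map $\theta_*$). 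Thus everything reduces to proving that $\kappa\circ f$ is $C^r$, i.e.\ that $f$ is $C^r$ ``one level lower''.

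This is the main obstacle: the factor $\theta_*(\kappa(\gamma))$ requires $\gamma$ at the level $C^k$, whereas the hypothesis only furnishes $\iota\circ f$ at the level $C^0$, so substituting directly is circular. I would break the circle by induction on $k$, proving the statement for all $C^r$-manifolds $M$ and all $r$ at once. In the base case $k=0$ one has $\kappa=\iota$, so $\kappa\circ f=\iota\circ f$ is $C^r$ by hypothesis, and the preceding paragraph together with Lemma~\ref{ini1} (for $k=0$) finishes. For the inductive step, assuming the lemma for $k-1$, I apply it to $\kappa\circ f\colon M\to C^k([0,1],G)$: its hypotheses hold because the inclusion $C^k([0,1],G)\to C([0,1],G)$ precomposed with $\kappa\circ f$ is again $\iota\circ f$, while $\delta^\ell\circ(\kappa\circ f)=\lambda\circ\delta^\ell\circ f$ with the smooth inclusion $\lambda\colon C^k([0,1],L(G))\to C^{k-1}([0,1],L(G))$, hence is $C^r$. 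Here it is essential that $\delta^\ell$ lowers the order of differentiability by one, so that passing from $f$ to $\kappa\circ f$ lands precisely in the instance covered by the inductive hypothesis. That hypothesis then gives that $\kappa\circ f$ is $C^r$, whence $D\circ f$ is $C^r$, and Lemma~\ref{ini1} yields that $f$ is $C^r$, completing the induction.
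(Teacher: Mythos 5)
Your proof is correct and follows essentially the same route as the paper: the same identity $D(\gamma)=(T\mu)_*\bigl(\theta_*(\kappa(\gamma)),\alpha_*(\delta^\ell(\gamma))\bigr)$, the same reduction to Lemma~\ref{ini1}, and the same inductive mechanism. The only (cosmetic) difference is that you run the induction on the lemma's parameter $k$, applying the statement for $k-1$ to $\kappa\circ f$, whereas the paper inducts internally on $j=0,\ldots,k$, showing that the fixed map $f$ is $C^r$ into $C^{j+1}([0,1],G)$; unrolled, the two inductions coincide.
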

\begin{proof}
We show by induction on $j=0,\ldots, k$ that $f$ is $C^r$ as a map to
$C^{j+1}([0,1],G)$.
The same notation, $\delta^\ell$,
will be used for the corresponding map
$C^{j+1}([0,1],G)\to C^j([0,1], L(G))$.
Let $\mu$, $\theta$, $\alpha$ and the pushforwards
$\theta_*\colon C^j([0,1],G)\to C^j([0,1],TG)$
and $\alpha_*\colon C^j([0,1],L(G))\to C^j([0,1],TG)$
be as in~\ref{embeddings}.
If $x\in M$, we have
$D(f(x))(t)=f(x)(t).\delta^\ell(f(x))(t)$
and hence $D(f(x))
=T\mu\circ (\theta\circ (f(x)),\alpha\circ (\delta^\ell(f(x))))$,
i.e.,
\begin{equation}\label{twoti}
D\circ f=(T\mu)_*\circ (\theta_*\circ f,\alpha_*\circ \delta^\ell\circ f)
\end{equation}
with the smooth group multiplication
$(T\mu)_*\colon C^j([0,1],TG)\times C^j([0,1],TG)\to C^j([0,1],TG)$.
If $j=0$, then $\theta_*\circ f=\theta_*\circ \iota\circ f$ is $C^r$,
whence also $D\circ f$ is $C^r$ (by (\ref{twoti}))
and hence also $f$ (by Lemma~\ref{ini1}).\\[2.4mm]
If $j>0$, let us make the inductive hypothesis that $f$ is $C^r$ as a map to $C^j([0,1],G)$.
Then (\ref{twoti}) shows that $D\circ f$ is $C^r$ as a map to $C^j([0,1],TG)$.
Hence $f$ is $C^r$ as a map to $C^{j+1}([0,1],G)$,
by Lemma~\ref{ini1}.
\end{proof}
\begin{la}\label{pivot}
Let $G$ be a Lie group,
$k \in \N_0\cup\{\infty\}$
and
\begin{equation}\label{thisvers}
\delta^\ell\colon C^{k+1}_*([0,1],G)\to
C^k_*([0,1],L(G))
\end{equation}
be left logarithmic differentiation.
Let ${\bf 1}\in C^{k+1}_*([0,1],G)=:P$
be the constant function $t\mto e$.
Then, after identifying $T_{\bf 1}P$ with
$C^{k+1}_*([0,1],L(G))$ in a suitable way,
the tangent map of $\delta^\ell$ at ${\bf 1}$
becomes the map
\begin{equation}\label{roughly}
C^{k+1}_*([0,1],L(G))\to C^k([0,1],L(G)),\quad \gamma\mto \gamma'\,.
\end{equation}
\end{la}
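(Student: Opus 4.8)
The plan is to reduce the computation of $T_{\bf 1}\delta^\ell$ to a single pointwise differentiation carried out in a chart around~$e$. Write $E$ for the modelling space of $G$ and fix a chart $\phi\colon U\to V\sub E$ with $e\in U$ and $\phi(e)=0$. The pushforward chart $\phi_*$ of \ref{defmagp} identifies an open ${\bf 1}$-neighbourhood in $P$ with an open subset of $C^{k+1}_*([0,1],E)$, and composing with the topological isomorphism $T_e\phi\colon L(G)\to E$ yields the identification $T_{\bf 1}P\isom C^{k+1}_*([0,1],L(G))$ in the statement: a tangent vector $X$ is represented by a smooth curve $s\mapsto\gamma_s$ in $P$ with $\gamma_0={\bf 1}$, and corresponds to $t\mapsto\partial_s|_{s=0}\gamma_s(t)\in T_eG=L(G)$, which vanishes at $t=0$ since $\gamma_s(0)=e$ for all $s$. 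By Lemma \ref{aresmooth} the map $\delta^\ell$ is smooth, so $T_{\bf 1}\delta^\ell$ is a continuous linear map into the locally convex space $C^k([0,1],L(G))$; as every evaluation $\ev_t$ is continuous linear and the family $(\ev_t)_{t\in[0,1]}$ separates points, it suffices to show $\ev_t(T_{\bf 1}\delta^\ell(X))=X'(t)$ for all $t$ and all $X$.

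To make this pointwise differentiation rigorous I would factor $\delta^\ell$ through the smooth left trivialization $\kappa\colon TG\to L(G)$, $v\mapsto \pi(v)^{-1}.v$ (with $\pi\colon TG\to G$ the bundle projection), namely $\delta^\ell(\gamma)=\kappa\circ\gamma'$, so that $\ev_t\circ\delta^\ell$ is the composite of $\gamma\mapsto\gamma'(t)\in TG$ with $\kappa$. Reading $\kappa$ in the tangent chart $T\phi$ over~$U$ and through $T_e\phi$ on $L(G)$, its coordinate representative is the smooth map $\wb\kappa\colon V\times E\to E$, $\wb\kappa(y,w)=D_2 m(\nu(y),y)\,w$, where $m$ and $\nu$ are multiplication and inversion read in $\phi$ and $D_2 m$ is the partial differential of $m$ in its second argument; this is just the chart form of $T_{\gamma(t)}\lambda_{\gamma(t)^{-1}}(\gamma'(t))$, since $\lambda_{\gamma(t)^{-1}}$ carries $\gamma(t)$ to~$e$.

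The computation then comes down to the differential of $\wb\kappa$ at $(0,0)$. Setting $c_s:=\phi\circ\gamma_s$ we have $c_0\equiv 0$ and $\xi:=\partial_s|_{s=0}c_s=T_e\phi\circ X$, and the curve $s\mapsto\gamma_s'(t)$ is represented in coordinates by $s\mapsto(c_s(t),c_s'(t))$, with value $(0,0)$ and velocity $(\xi(t),\xi'(t))$ at $s=0$. Hence $T_e\phi(\ev_t(T_{\bf 1}\delta^\ell(X)))=D\wb\kappa(0,0)(\xi(t),\xi'(t))$. Now $\wb\kappa(\,\cdot\,,0)\equiv 0$, so the $y$-partial drops out, while $\wb\kappa(0,w)=D_2 m(0,0)\,w=w$ because $m(0,y)=y$ forces $D_2 m(0,0)=\id_E$; thus the $w$-partial is the identity and $D\wb\kappa(0,0)(\xi(t),\xi'(t))=\xi'(t)=T_e\phi(X'(t))$. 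Since the $\ev_t$ separate points, this gives $T_{\bf 1}\delta^\ell(X)=X'$ under the identifications, as asserted.

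I expect the only genuine obstacle to be the interchange of the $s$-derivative with the evaluation $\ev_t$ and with the $t$-derivative hidden in $\gamma_s'(t)$; this is exactly what the factorization $\delta^\ell=\kappa\circ(\,\cdot\,)'$ together with the chain rule applied to the smooth representative $\wb\kappa$ resolves cleanly, while continuity of $\ev_t$ and of the differentiation operator $C^{k+1}([0,1],E)\to C^k([0,1],E)$ guarantees that the pointwise values assemble into the claimed continuous linear map into $C^k([0,1],L(G))$.
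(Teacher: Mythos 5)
Your proposal is correct and takes essentially the same approach as the paper: both arguments transport $\delta^\ell$ into a chart around ${\bf 1}$, use the smoothness of $\delta^\ell$ (Lemma~\ref{aresmooth}) to know the differential exists, reduce its computation to pointwise identities via the continuous linear point evaluations, and conclude from the two facts that the ``base'' partial of the trivialized multiplication vanishes at the identity while the ``fibre'' partial is the identity. The only difference is packaging: the paper writes the chart form as $\delta^\ell_V(\gamma)(s)=d\nu(\gamma(s)^{-1},\gamma(s);0,\gamma'(s))$ and differentiates along the ray $t\mapsto t\gamma$ using the product rule, whereas you factor $\delta^\ell=\kappa\circ(\,\cdot\,)'$ through the left trivialization and compute $D\wb{\kappa}(0,0)$ by its partials --- the resulting calculation is the same.
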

\begin{proof}
Let $U\sub G$ be an open symmetric identity neighbourhood
on which a chart $\phi\colon U\to V\sub L(G)$ is defined, with $\phi(e)=0$
and $d\phi|_{L(G)}=\id_{L(G)}$.
Let $\mu\colon G\times G\to G$ be the multiplication of $G$
and $\sigma\colon G\to G$ be inversion.
Then $D_U:=\{(g,h)\in U\times U\colon gh\in U\}$
is an open subset of $U\times U$,
and hence $D_V:=(\phi\times \phi)(D_U)$
is an open subset of $V\times V$. The maps
\[
\tau:=\phi\circ \sigma\circ \phi^{-1}\colon V\to V\quad\mbox{and}
\]
\[
\nu:=\phi\circ \mu\circ (\phi^{-1}\times \phi^{-1})\colon D_V\to V
\]
are smooth and $(V,D_V,\nu,0)$ 
is a local group as in \cite[Definition~II.1.10]{SUR},
with inverses given by $x^{-1}:=\tau(x)$.
Consider the restriction
\[
\delta^\ell_U\colon
C^{k+1}_*([0,1],U)\to C^k([0,1],L(G))
\]
of $\delta^\ell$ from (\ref{thisvers})
and the map
$\delta^\ell_V\colon
C^{k+1}_*([0,1],V)\to C^k([0,1],L(G))$
defined via
\[
\delta^\ell_V(\gamma)(s):=
d\nu(\gamma(s)^{-1},\gamma(s);0,\gamma'(s))
\]
for $\gamma\in C^{k+1}_*([0,1],V)$ and $s\in [0,1]$.
Let $\phi_*\colon C^{k+1}_*([0,1],U)\to C^{k+1}([0,1],V)$,
$\gamma\mto\phi\circ\gamma$
be the pushforward.
An elementary calculation shows that
\begin{equation}\label{commutat}
\delta^\ell_V\circ \phi_*=\delta^\ell_U.
\end{equation}
Since $\phi_*$ is a $C^\infty$-diffeomorphism between open subsets of
$C^{k+1}_*([0,1],G)$ and $C^{k+1}([0,1],L(G))$, respectively,
we deduce from the smoothness of $\delta^\ell_U$
that also $\delta^\ell_V$ is smooth. Applying the Chain Rule to (\ref{commutat})
now gives
\begin{equation}\label{commu2}
d(\delta^\ell_V)\circ T_{{\bf 1}}(\phi_*)=d(\delta^\ell_U)|_{T_{{\bf 1}}P}.
\end{equation}
We claim that
\begin{equation}\label{almost}
\frac{d}{dt}\Big|_{t=0}\delta^\ell_V(t\gamma)=\gamma'
\end{equation}
for each $\gamma\in C^{k+1}_*([0,1],L(G))$,
i.e., $d\delta^\ell_V(0,\gamma)=\gamma'$.
Since also the map\linebreak
$T_{{\bf 1}}(\phi_*)
\colon T_{\bf 1}P\to \{0\}\times  C^{k+1}_*([0,1],L(G))\cong C^{k+1}_*([0,1],L(G))$
is an isomorphism, we deduce from (\ref{commu2})
and (\ref{almost})
that $d(\delta^\ell)|_{T_{\bf 1}P}$ is the map
in (\ref{roughly}),
up to composition with the isomorphism
$T_{{\bf 1}}(\phi_*)$.\\[2.4mm]
To verify (\ref{almost}), we only need to show that
\[
\left(\frac{d}{dt}\Big|_{t=0}(\delta^\ell_U(t\gamma)\right)(s)=\gamma'(s)
\]
for each $s\in [0,1]$. The point evaluation at $s$ being
continuous linear, the left hand side coincides
with
\begin{eqnarray*}
\frac{d}{dt}\Big|_{t=0}(\delta^\ell_U(t\gamma)(s)) &=&
\frac{d}{dt}\Big|_{t=0}   d\nu((t\gamma(s))^{-1},t\gamma(s); 0, t\gamma'(s))\\
&=&\underbrace{d^{(2)}\nu(0,0;0,0;-\gamma(s),\gamma(s))}_{=0}+
d\nu(0,0;0,\gamma'(s))\\
&=&\gamma'(s),
\end{eqnarray*}
as required.
\end{proof}
\begin{la}\label{explawgp}
Let $G$ be a Lie group, $M$ be a manifold
and $f\colon M\times [0,1]\to G$ be a $C^{r,s}$-map.
Then $f^\vee(p):=f(p,.)\colon [0,1]\to G$ is $C^s$ for each
$p\in M$, and the map $f^\vee\colon M\to C^s([0,1],G)$
is $C^r$.
\end{la}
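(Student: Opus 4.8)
The first assertion is exactly \ref{preexpo} (applied with $M_1:=M$, $M_2:=[0,1]$ and $N:=G$), so only the $C^r$-property of $f^\vee$ requires work. Since being $C^r$ is a local condition on the source that may be checked in charts, I fix $p_0\in M$ and, after composing with a chart of $M$ around $p_0$ (which is $C^\infty$ and hence changes neither the $C^{r,s}$-property of the relevant maps nor the $C^r$-property to be proved), I may assume that $M=\Omega$ is a locally convex subset with dense interior of a locally convex space $E_M$, and that it suffices to produce a relatively open, convex neighbourhood $P\sub\Omega$ of $p_0$ on which $f^\vee$ is $C^r$. The point of this reduction is to bring the source into the form required by the vector-valued exponential law \ref{baprels}\,(f).

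The plan is to exploit that $C^s([0,1],G)$ is a Lie group (see \ref{defmagp}) in order to move everything into the single chart around the constant curve $\one\colon t\mto e$. Let $E$ be the modelling space of $G$, fix a chart $\phi\colon U\to V\sub E$ of $G$ with $e\in U$ and $\phi(e)=0$, and write $\gamma_0:=f^\vee(p_0)=f(p_0,\cdot)\in C^s([0,1],G)$. The map $(p,t)\mto f(p,t)f(p_0,t)^{-1}$ is continuous and takes the value $e$ on $\{p_0\}\times[0,1]$; as $[0,1]$ is compact, the tube lemma yields a (convex, relatively open) neighbourhood $P$ of $p_0$ with $f(p,t)f(p_0,t)^{-1}\in U$ for all $(p,t)\in P\times[0,1]$. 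Hence
\[
g\colon P\to C^s([0,1],U)\sub C^s([0,1],G),\qquad g(p):=f^\vee(p)\cdot\gamma_0^{-1}
\]
is well defined, and $f^\vee|_P=\rho_{\gamma_0}\circ g$ for the smooth right translation $\rho_{\gamma_0}$ of $C^s([0,1],G)$; thus it suffices to show that $g$ is $C^r$. As $g$ takes values in the chart domain $C^s([0,1],U)$, this amounts to showing that $\phi_*\circ g\colon P\to C^s([0,1],V)\sub C^s([0,1],E)$ is $C^r$, where $\phi_*$ is the chart from \ref{defmagp}. Writing $h\colon P\times[0,1]\to E$, $h(p,t):=\phi\big(f(p,t)f(p_0,t)^{-1}\big)$, we have $\phi_*\circ g=h^\vee$.

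It therefore remains to check that $h$ is $C^{r,s}$ as a map into the locally convex space $E$, for then \ref{baprels}\,(f) shows that $h^\vee\colon P\to C^s([0,1],E)$ is $C^r$, as wanted. Here $(p,t)\mto f(p,t)$ is $C^{r,s}$ by hypothesis, while $(p,t)\mto f(p_0,t)=\big(f\circ(c_{p_0}\times\id_{[0,1]})\big)(p,t)$ is $C^{r,s}$ by \ref{baprels}\,(c), using the constant (hence $C^r$) map $c_{p_0}\colon p\mto p_0$ and the $C^s$-map $\id_{[0,1]}$. Composing the resulting $C^{r,s}$-map $(p,t)\mto\big(f(p,t),f(p_0,t)^{-1}\big)$ into $G\times G$ (formed via \ref{baprels}\,(a) together with the smooth inversion $\sigma$ of $G$) first with the smooth multiplication $\mu\colon G\times G\to G$ and then with the smooth chart $\phi$, two applications of \ref{baprels}\,(b) show that $h$ is $C^{r,s}$; throughout, the composition rules of \ref{baprels} are invoked in the manifold form obtained from \ref{defcrsman} and \ref{nwintr} (cf.\ \cite{AS}), since $G$ and $G\times G$ are manifolds. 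The main obstacle is exactly this reduction to a single target chart: the vector-valued exponential law \ref{baprels}\,(f) only becomes applicable once $f^\vee$ has been localized into one chart $C^s([0,1],U)$, and it is the Lie group structure of $C^s([0,1],G)$ together with the tube lemma (which guarantees containment in $U$ uniformly in $t$) that accomplishes this, sparing us a partition of $[0,1]$ and a subsequent gluing argument.
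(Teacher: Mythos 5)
Your proposal is correct and follows essentially the same route as the paper's proof: localize via the Wallace/tube lemma and a chart of $M$, pass to the auxiliary map $q\mapsto f(q,\cdot)f(p_0,\cdot)^{-1}$ with values in the chart domain $U$, verify it is $C^{r,s}$ via the composition rules of \ref{baprels}, and then apply the exponential law \ref{baprels}\,(f) together with the chart $\phi_*$ of $C^s([0,1],G)$. The only (harmless) deviations are that you handle $(p,t)\mapsto f(p_0,t)$ via \ref{baprels}\,(c) with a constant map where the paper uses \ref{preexpo} and \ref{baprels}\,(g), and that you spell out the final recovery $f^\vee|_P=\rho_{\gamma_0}\circ g$ by smooth right translation, a step the paper leaves implicit.
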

\begin{proof}
The first assertion is a special case of \ref{preexpo}.
To establish the second, choose a chart $\phi\colon U\to V$ of $G$ around~$e$.
For $p\in M$, consider the auxiliary function
\[
g\colon M\times [0,1]\to G,\quad h(q,t):=f(q,t)f(p,t)^{-1}.
\]
Since $g$ is continuous and $g(\{p\}\times [0,1])=\{e\}$,
$g^{-1}(U)$ is an open superset of the compact set
$\{p\}\times [0,1]$. By the Wallace Lemma \cite[5.12]{Kel},
there is an open neighbourhood $P\sub M$
such that $P\times [0,1]\sub g^{-1}(U)$ and thus
$g(P\times [0,1])\sub U$. After shrinking~$P$, we may assume that there exists
a chart $\psi\colon P\to Q$ for~$M$.
Without loss of generality $M=P$.
Let $\mu\colon G\times G\to G$ be the group multiplication and
$\sigma\colon G\to G$ be inversion.
Since $f(p,.)\colon [0,1]\to G$ is $C^s$ by \ref{preexpo},
using \ref{baprels} (g) we deduce that the map
\[
\zeta\colon M\times [0,1]\to G,\quad (q,t)\mto f(p,t)^{-1}=\sigma(f(p,t))
\]
is $C^{r,s}$. Hence $g=\mu\circ (f,\zeta)$
is $C^{r,s}$, as a consequence of \ref{baprels}\,(c).
Thus $h:=\phi\circ g\circ (\psi^{-1}\times \id_{[0,1]})$
is $C^{r,s}$. Now \ref{baprels}\,(f) shows that
$h^\vee\colon Q\to C^s([0,1],V)$\linebreak
is $C^r$.
Hence also $g^\vee=(\phi_*)^{-1}\circ h^\vee\circ \psi$
is $C^r$, using that the map\linebreak
$\phi_*\colon C^s([0,1],U)\to C^s([0,1],V)$, $\gamma\mto\phi\circ\gamma$
is a chart for $C^s([0,1],G)$ and hence a $C^\infty$-diffeomorphism. 
\end{proof}
\begin{la}\label{themapS}
Let $E$ be a locally convex space and $k\in \N_0\cup\{\infty\}$.
Then
$S\colon C^k([0,1],E)\times [0,1]\to C^k([0,1],E)$,
\[
S(\gamma,s)(t):=s\gamma(st)\quad
\mbox{for $\,\gamma\in C^k([0,1],E)$, $\,s,t\in [0,1]$}
\]
is a $C^{\infty,0}$-map.
\end{la}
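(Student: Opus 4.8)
The plan is to exploit the fact that, for each fixed $s\in[0,1]$, the map $S(\cdot,s)\colon C^k([0,1],E)\to C^k([0,1],E)$ is \emph{linear}: the assignment $\gamma\mapsto\bigl(t\mapsto s\gamma(st)\bigr)$ is visibly additive and homogeneous in $\gamma$ (and lands in $C^k$ since $t\mapsto st$ is affine). For a map that is linear in its first argument, the iterated partial derivatives in that argument collapse. Indeed, by linearity the directional derivative $D_{(\eta,0)}S$ exists and takes the value $S(\eta,s)$ at $(\gamma,s)$, so that $d^{1,0}S(\gamma,s;\eta)=S(\eta,s)$ is independent of $\gamma$; hence $d^{i,0}S=0$ for every $i\ge 2$. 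These $d^{i,0}S$ are the only derivatives required for $C^{\infty,0}$, since one differentiates zero times in the second variable, and each of them is continuous as soon as $S$ itself is: $d^{0,0}S=S$, the map $(\gamma,s,\eta)\mapsto d^{1,0}S(\gamma,s;\eta)=S(\eta,s)$ is $S$ precomposed with a continuous projection, and the higher ones vanish. (This is precisely the mirror image of \ref{baprels}\,(d), with the roles of the two variables interchanged.) Thus the entire statement reduces to continuity of $S$. The passage to the interior $(0,1)$ demanded by the definition of $C^{r,s}$ for the non-open second factor $[0,1]$ is harmless, because we never differentiate in the $s$-direction, so only the continuous extension of $S$ to $s\in\{0,1\}$ is at issue, and this is subsumed by the continuity claim on the full domain.

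To prove continuity I would argue directly with the seminorms defining the $C^k$-topology. The chain rule gives
\[
\bigl(S(\gamma,s)\bigr)^{(j)}(t)=s^{\,j+1}\gamma^{(j)}(st)\qquad(0\le j\le k,\ t\in[0,1]),
\]
so for a continuous seminorm $p$ on $E$ the relevant seminorm of $S(\gamma,s)-S(\gamma_0,s_0)$ is governed by $\sup_{t\in[0,1]}p\bigl(s^{\,j+1}\gamma^{(j)}(st)-s_0^{\,j+1}\gamma_0^{(j)}(s_0t)\bigr)$. I would estimate this by the triangle inequality, separating the change in $\gamma$ from the change in $s$: the term $s^{\,j+1}\bigl(\gamma^{(j)}(st)-\gamma_0^{(j)}(st)\bigr)$ is dominated, uniformly in $t$, by $\sup_{u\in[0,1]}p(\gamma^{(j)}(u)-\gamma_0^{(j)}(u))$, which is small once $\gamma$ is $C^k$-close to $\gamma_0$; while $s^{\,j+1}\gamma_0^{(j)}(st)-s_0^{\,j+1}\gamma_0^{(j)}(s_0t)$ is small once $s$ is close to $s_0$, because $s^{\,j+1}\to s_0^{\,j+1}$ and $\gamma_0^{(j)}$ is uniformly continuous on the compact interval $[0,1]$ with $|st-s_0t|\le|s-s_0|$ uniformly in $t$.

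The main work is this continuity estimate, and I expect it to be the only genuine obstacle; everything else is the formal linearity reduction. The estimate is routine for $E=\R$, but it must be carried out throughout for an arbitrary continuous seminorm $p$ (and, when $k=\infty$, for every derivative order $j$), since $C^k([0,1],E)$ need not be metrizable. I would therefore phrase it as a neighbourhood-of-$(\gamma_0,s_0)$ argument rather than through sequences. Once continuity is established, the reduction above immediately yields that $S$ is $C^{\infty,0}$.
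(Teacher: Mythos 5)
Your proposal is correct, and it shares the paper's key reduction while replacing its main technical step by a different one. Both arguments exploit linearity in $\gamma$: a map that is linear and continuous in its first variable is automatically $C^{\infty,0}$, since $d^{1,0}$ at $(\gamma,s)$ in direction $\eta$ equals the map itself evaluated at $(\eta,s)$ (so is independent of $\gamma$) and all $d^{i,0}$ with $i\geq 2$ vanish; you prove this mirror image of \ref{baprels}\,(d) inline, whereas the paper invokes its $C^{r,s}$-toolbox, first factoring $S(\gamma,s)=\beta(s,h(\gamma,s))$ through $h(\gamma,s)=\gamma_s$ and the scalar multiplication $\beta$ --- a decomposition your observation that $S$ itself is already linear in $\gamma$ makes unnecessary. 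The genuine divergence is the continuity proof. The paper never touches seminorms: it writes $h^\wedge(\gamma,s,t)=\gamma(st)=\varepsilon(\gamma,st)$, quotes that the evaluation $\varepsilon\colon C^k([0,1],E)\times[0,1]\to E$ is $C^k$, concludes that $h^\wedge$ is $C^{0,k}$, and obtains continuity of $h=(h^\wedge)^\vee$ from the exponential law \ref{baprels}\,(f). You instead compute $(S(\gamma,s))^{(j)}(t)=s^{j+1}\gamma^{(j)}(st)$ and estimate against the defining seminorms, using $s^{j+1}\leq 1$, boundedness of $p\circ\gamma_0^{(j)}$, and uniform continuity of $\gamma_0^{(j)}$ on $[0,1]$ together with $|st-s_0t|\leq|s-s_0|$; your split of the $s$-increment does need the further separation into $(s^{j+1}-s_0^{j+1})\gamma_0^{(j)}(st)$ plus $s_0^{j+1}\bigl(\gamma_0^{(j)}(st)-\gamma_0^{(j)}(s_0t)\bigr)$, but both ingredients are present in your sketch. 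Your route is elementary and self-contained (it does not lean on the smoothness of the evaluation map or the exponential law, both cited from work in preparation), at the price of a hands-on estimate that must be run over all orders $j\leq k$ and all continuous seminorms $p$, in the neighbourhood formulation you rightly insist on since $C^k([0,1],E)$ need not be metrizable; the paper's route buys uniformity with its general calculus and avoids any explicit description of the topology. Your explicit treatment of the boundary points $s\in\{0,1\}$ --- the $d^{i,0}$ never involve differentiation in $s$, so only continuous extension of already-continuous maps is needed --- is sound, and is a point the paper handles only implicitly through its abstract lemmas.
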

\begin{proof}
Consider the map
\[
h\colon C^k([0,1],E)\times [0,1]\to C^k([0,1],E),\quad
(\gamma,s)\mto \gamma_s
\]
(with $\gamma_s(t):=\gamma(st)$).
Then
\begin{equation}\label{conseq}
S(\gamma,s)=\beta(s,h(\gamma,s)),
\end{equation}
using the scalar multiplication
$\beta\colon \R\times C^k([0,1],E)\to C^k([0,1],E)$, $\beta(r,\gamma):=r\gamma$.
Now
$\beta$ is continuous bilinear and hence smooth.
Moreover, the map $\pi\colon C^k([0,1],E)\times [0,1]\to \R$,
$(\gamma,s)\mto s$ is $C^\infty$
and hence $C^{\infty,0}$.
We claim that also $h$ is $C^{\infty,0}$.
If this is true, then $(\pi,h)$ is $C^{\infty,0}$
by \ref{baprels}\,(a).
Since $S=\beta\circ (\pi,h)$ by (\ref{conseq}),
\ref{baprels}\,(b) shows that $S$ is $C^{\infty,0}$.\\[2.5mm]
Because $h$ is linear in its first argument,
\ref{baprels}\,(c) applies.
Thus, the claim will hold if we can show that $h$ is $C^{0,0}$
(i.e., $C^0$).
To this end, consider the map
\[
h^\wedge \colon (C^k([0,1],E)\times [0,1])\times [0,1]\to E,\quad
h^\wedge(\gamma,s,t):=\gamma(st).
\]
Thus $h^\wedge(\gamma,s,t)=\ve(\gamma,st)$,
where $\ve\colon C^k([0,1],E)\times [0,1]\to E$, $\ve(\gamma,s):=\gamma(s)$
is $C^k$ (see \cite{GaN}, cf.\ \cite[Proposition~11.1]{ZOO})
and hence $C^{0,k}$.
Therefore $h=(h^\wedge)^\vee$ is continuous
(by \ref{baprels}\,(e)), as required.
\end{proof}
\section{Proof of Theorem A}
\begin{la}\label{evolEvol}
Let $k,m\in \N_0\cup\{\infty\}$
and $G$ be a $C^m$-semiregular Lie group,
with Lie algebra $\cg:=L(G)$.
Then $\evol \colon C^k([0,1],\cg)\to G$
is~$C^m$ if and only if $\Evol\colon C^k([0,1],\cg)\to
C^{k+1}([0,1],G)$ is~$C^m$.
\end{la}
\begin{proof}
If $\Evol$ is $C^m$, then also
$\evol=\ev_1\circ \Evol$ is $C^m$,
using that the evaluation map
\[
\ev_1\colon C^{k+1}([0,1],G)\to G, \quad \gamma\mapsto \gamma(1)
\]
is a smooth homomorphism.\\[2.3mm]
Conversely, assume that $\evol$ is $C^m$.

(i) By Lemma~\ref{explawgp}, $\Evol\colon C^k([0,1],\cg)\to C^0([0,1],G)$
will be~$C^m$ if we can show that the map
\[
\Evol^\wedge\colon C^k([0,1],\cg)\times [0,1]\to G,\quad
\Evol^\wedge(\gamma,s):=\Evol(\gamma)(s)
\]
is $C^{m,0}$ (using that $\Evol=(\Evol^\wedge)^\vee$).
However, using the $C^{\infty,0}$-map
\[
S\colon C^k([0,1],L(G))\times [0,1]\to C^k([0,1],L(G)),\quad
(\gamma,s)\mto s\gamma_s
\]
from Lemma~\ref{themapS},
\ref{simplsimp} enables us to write
\[
\Evol^\wedge(\gamma,s)=\Evol(\gamma)(s)=\evol(S(\gamma,s)).
\]
Thus $\Evol^\wedge=\evol\circ S$,
which is $C^{m,0}$ by \ref{baprels}\,(b)
(being a composition of a $C^\infty$-map and
a $C^{m,0}$-map).

(ii) Now consider the inclusion map
$\iota\colon C^{k+1}([0,1],G)\to C^0([0,1],G)$
and the map $\delta^\ell\colon
C^{k+1}([0,1],G)\to C^k([0,1],\cg)$.
Then $\iota\circ \Evol$ is $C^m$ (by (i))
and $\delta^\ell\circ \Evol$ is the identity map
$C^k([0,1],\cg)\to C^k([0,1],\cg)$
and hence~$C^m$ as well.
Therefore $\Evol\colon C^k([0,1],\cg)\to C^{k+1}([0,1],G)$
is~$C^m$, by Lemma~\ref{ini2}.
\end{proof}
{\bf Proof of Theorem A.}
By \ref{defmastar}, $\Evol$ is also smooth when regarded as a map
\begin{equation}\label{onedir}
\Evol \colon C^k([0,1],\cg)\to C^{k+1}_*([0,1],G).
\end{equation}
Also the map
$\delta^\ell \colon C^{k+1}([0,1],G)\to C^k(0,1],\cg)$
is smooth (by Lemma~\ref{aresmooth}),
and so is its restriction
\begin{equation}\label{secdir}
\delta^\ell \colon C^{k+1}_*([0,1],G)\to C^k([0,1],\cg).
\end{equation}
Because the mappings in (\ref{onedir})
and (\ref{secdir}) are mutually inverse,
we conclude that they are $C^\infty$-diffeomorphisms.
The proof is complete.\Punkt
\section{Proof of Theorem B}
\noindent
Consider the algebra $P = \R[X]$ of
polynomial functions on $[0, 1]$
and its multiplicative subset
$S := \{s\in P\colon s([0, 1]) \sub \R^\times\}$.
Then
\[
A := PS^{-1}= \{p/s\colon p \in P, s \in S\}
\]
is a unital subalgebra of the Banach algebra
$C[0, 1]$ of continuous real-valued
functions on $[0,1]$ (and we endow $A$ with the induced topology).
As shown in \cite[\S6]{ALG},
$A$ has an open group~$A^\times$
of invertible elements
and~$A^\times$ is a smooth (and even analytic)
Lie group.
Note that $H:=\{a\in A\colon a(0)=0\}$
is a closed non-unital subalgebra of~$A$.
Now consider the subgroup
\[
K:=\{a\in A^\times\colon a(0)=1\}
\]
of~$A^\times$.
The map $\phi\colon A^\times \to A^\times -\{1\}$,
$a\mto a-1$ is a global chart for the manifold~$A^\times$,
which takes~$K$ onto $(A^\times -1)\cap H$.
Hence~$K$ is a submanifold of~$A^\times$ modelled on~$H$
and hence a Lie (sub)group.
We let~$G$ be the $1$-connected
covering Lie group of the connected component~$K_1$
of~$1$ in~$K$,
and $q\colon G\to K_1$ be the universal covering
homomorphism. We identify $L(G)$ with $L(K_1)$ by means of $L(q)$,
and $L(K_1)$ with~$H$ by means of the isomorphism $d\phi|_{L(K_1)}$.
We now verify that $G$ and $H$ provide counterexamples
for all of the statements (a)--(d) discussed in Theorem~B.\\[2.4mm]
First, we show that $K_1$ has the property described at the end of Theorem~B.
To this end, let $\gamma\colon (\R,+)\to K_1$
be a homomorphism of groups.
Since~$K_1$ is a subgroup of $A^\times$,
\cite[Proposition 6.1]{ALG} shows that $\im(\gamma)\sub \R^\times 1$.
Then $\im(\gamma)\sub( \R^\times 1)\cap K=\{1\}$,
whence $\gamma=1$ indeed.\\[2.4mm]
(d) Let $\gamma\colon (\R,+)\to G$
be a homomorphism of groups.
Then $q\circ \gamma=1$, as just observed.
Hence $\im(\gamma)$ is a subgroup of
$\ker(q)$ and hence discrete.
If we assume that $\gamma$ is continuous, then
$\im(\gamma)$ is connected.
Being also discrete, it must be $\{e\}$.
Hence $\gamma=e$.\\[2.4mm]
(c) If $G$ was isomorphic to $E/\Gamma$ as a Lie group
with a locally convex space~$E$ and discrete subgroup $\Gamma\sub E$,
then $E$ (like $G$) would be infinite-dimensional.
Hence a non-zero vector $v\in E$ exists, and provides
a non-constant smooth homomorphism $\R\to E$, $t\mto tv$,
corrresponding to a non-constant smooth homomorphism
$\R\to G$. But we have just seen that $G$ does not admit non-trivial continuous
one-parameter groups, contradiction.\\[2.4mm]
(b) The Lie algebras of both $(H,+)$ and $G$ can be identified with the locally convex space
$H$ (with the zero Lie bracket). Both $G$ and $H$ are $1$-connected,
but they cannot be isomorphic because $H$ has many non-constant
smooth one-parameter groups (of the form $t\mto tv$ with non-zero $v\in H$)
while $H$ has none.\\[2.4mm]
(a) The Lie algebra homomorphism $\id_H\colon H\to H$
cannot integrate to a smooth homomorphism $\Phi\colon (H,+)\to G$,
because for non-zero $v\in H$ the map $\gamma_v\colon \R\to G$, $t\mto \Phi(tv)$ then would be a smooth
homomorphism with $\gamma_v'(0)=L(\Phi)(v)=v\not=0$
and thus $\gamma_v\not=e$, contradicting the above.
This completes the proof of Theorem~B.\,\Punkt
\begin{rem}
The author does not know whether~$K_1$ in Theorem~B
can be chosen connected and simply connected.
In particular, he was not able to show that~$K_1$ can be replaced
by the Lie group $G$ just discussed.
\end{rem}
\section{Proof of Theorem C}\vspace{-1.2mm}
\hspace*{4mm}(a) Let $G$ be a $C^k$-regular Lie group with $k=0$
and $P:=C^{k+1}_*([0,1],G)$.
Since $\delta^\ell\colon C^{k+1}_*([0,1],G)
\to C^k([0,1],L(G))$
is a diffeomorphism
with inverse $\Evol\colon C^k([0,1],L(G))\to C^{k+1}_*([0,1],G)$,
the differential
\[
d(\delta^\ell)|_{T_{\bf 1}P}
\colon T_{\bf 1}P\to  C^k([0,1],L(G))
\]
(which coincides with
$d(\delta^\ell_U)|_{T_{\bf 1}P}$)
is an isomorphism.
Since also $T_{\bf 1}(\phi_*)$
is an isomorphism, we deduce from (\ref{commu2})
and (\ref{almost}) (applied with $k=0$)
that the map
\[
D\colon C^{k+1}_*([0,1],L(G))\to C^k([0,1],L(G)),\quad \gamma\mto\gamma'
\]
is an isomorphism and hence surjective. Recalling that $k=0$, we see that
$L(G)$ is integral complete.\vspace{2.4mm}

(b) If $(E,+)$ is $C^0$-regular, then $E$ is integral complete, by (a).
Conversely, suppose that $E$ is integral complete.
Then the weak integral $\eta(t):=\int_0^t \gamma(s)\,ds$
exists in~$E$ for
each $\gamma\in C([0,1],E)$ and $t\in [0,1]$,
and $\eta\colon [0,1]\to E$ so obtained is a $C^1$-map
such that $\eta(0)=0$ and $\delta^\ell\eta=\eta'=\gamma$
(by the Fundamental Theorem of Calculus, \cite{GaN}) and thus $\eta=\Evol(\gamma)$.
Now $\evol\colon C([0,1],E)\to E$, $\gamma\mto\int_0^1\gamma(s)\,ds$
is linear, and its continuity (and hence smoothness)
follows from the estimate $p(\int_0^1\gamma(s)\,ds)\leq \sup\{p(\gamma(s))\colon s\in [0,1|\}$,
valid for each continuous seminorm~$p$ on~$E$
and $\gamma\in C([0,1],E)$.\vspace{2.4mm}

(c) If $(E,+)$ is $C^1$-regular, it is regular and thus $E$ is Mackey complete, by
\cite[Remark~II.5.3\,(b)]{SUR}.
Conversely, suppose that $E$ is Mackey complete.
Then the weak integral $\eta(t):=\int_0^t \gamma(s)\,ds$
exists in~$E$ for
each $\gamma\in C^1([0,1],E)$ and $t\in [0,1]$,
and we conclude as in the proof of (b) that $(E,+)$ is $C^1$-regular.\vspace{2.4mm}

(d) According to \cite[p.\,267]{Voi},  the Example 4.6.110 on p.\,244 in \cite{Wil}
furnishes a locally convex space~$E$
which is Mackey complete but does not have the metric convex compactness
property. Thus $E$ fails to be integral complete,
and hence $(E,+)$ is a Lie group which is $C^1$-regular (by (c)) but not $C^0$-regular
(by (b)).\Punkt
\begin{rem}\label{betterC}
The proof of Theorem C\,(a)
remains intact if we assume instead that $G$ is $C^k$-semiregular for some $k\in \N_0\cup\{\infty\}$ and
$\evol\colon C^k([0,1],\cg)\to G$ (and hence also $\Evol\colon C^k([0,1],\cg)\to C^{k+1}_*([0,1],G)$) is $C^1$.
If we identify $L(C^{k+1}_*([0,1],G))$ with $C^{k+1}([0,1],\cg)$ using
$d(\phi_*)|_{T_1P}$, the conclusion is that $(d\delta^\ell)|_{T_1P}$ is the bijective map $\gamma\mto\gamma'$
whose inverse yields a map
\begin{equation}\label{TofEvol}
T_0\Evol\colon C^k([0,1],\cg)\to C^{k+1}([0,1],\cg)
\end{equation}
taking each $\gamma\in C^k([0,1],\cg)$ to a primitive.
Now consider $\evol\colon C^k([0,1],\cg)\to G$.
Evaluating at $t=1$,
we deduce from (\ref{TofEvol}) that $T_0\evol$ is given by
\begin{equation}\label{Tofevol}
I_\cg:=T_0\evol\colon C^k([0,1],\cg)\to\cg,\quad
\gamma\mto\int_0^1\gamma(s)\, ds;
\end{equation}
notably, the indicated weak integral exists.
This will be useful later, in the proof of Theorem~E (i.e., Proposition~\ref{getsmooth}).
\end{rem}
\section{Proofs for Theorems D and  E}\label{secDE}
Let $G$ be a Lie group with Lie algebra~$\cg$.
If $\gamma,\eta\in C^1([0,1],G)$, then we have
\[
\delta^\ell(\gamma\eta)(t)=\Ad(\eta(t))^{-1}\delta^\ell(\gamma)(t)+\eta(t)\quad \mbox{for all $t\in [0,1]$}
\]
(or $\delta^\ell(\gamma\eta)=\Ad(\eta)^{-1}.\delta^\ell(\gamma)+\eta$, in short)
and
\[
\delta^\ell(\gamma\eta^{-1})=\Ad(\eta)(\delta^\ell(\gamma)-\delta^\ell(\eta))
\]
(with $(\gamma\eta)(t):=\gamma(t)\eta(t)$ and $\gamma^{-1}(t):=\gamma(t)^{-1}$).
If $k\in \N_0\cup\{\infty\}$ and $G$ is $C^k$-semiregular
with evolution $\Evol\colon C^k([0,1],\cg)\to C^{k+1}([0,1],G)$,
then $\Evol\colon C^k([0,1],\cg)\to C^{k+1}_*([0,1],G)$ is a bijection
whose inverse is the map $\delta^\ell\colon C^{k+1}_*([0,1],G)\to C^k([0,1],\cg)$.
We define, for $\gamma, \eta\in C^k([0,1],\cg)$,
\begin{equation}\label{deltaprod}
\gamma*\eta:=\Ad(\Evol(\eta))^{-1}\gamma+\eta.
\end{equation}
Then
$C^k([0,1],\cg)$ becomes a group with multiplication~$*$
and the mapping\linebreak
$\Evol\colon C^k([0,1],\cg) \to C^{k+1}_*([0,1],G)$
becomes an isomorphism of groups (by the preceding calculations).
Moreover,
\begin{equation}\label{easiermap}
\gamma*\eta^{-1}=\Ad(\Evol(\eta))(\gamma-\eta).
\end{equation}
Using the identity map $\id$ from the group
$C^k([0,1],\cg)$ to the locally convex space $C^k([0,1],\cg)$
as a global chart, we consider $C^k([0,1],\cg)$
as a smooth manifold.
Note that $\rho_\eta(\gamma):=
\gamma*\eta :=\Ad(\Evol(\eta))^{-1}.\gamma+\eta$ is an affine-linear function of~$\gamma$ for fixed~$\eta$.
Moreover, $\rho_\eta$ is continuous (as we presently verify).
Thus $\rho_\eta$ is smooth, with differential
\begin{equation}\label{drho}
d\rho_\eta(\gamma,\gamma_1)=
\Ad(\Evol(\eta))^{-1}\gamma_1=\gamma_1 *\eta-\eta.
\end{equation}
Likewise,
\begin{equation}
d\rho_{\eta^{-1}}(\gamma,\gamma_1)=
\Ad(\Evol(\eta))\gamma_1=(\gamma_1+\eta)*\eta^{-1}.
\end{equation}
Since $\rho_{\eta^{-1}}$ and $\rho_\eta$ are mutually inverse mappings,
we see that the right translation $\rho_\eta\colon C^k([0,1],\cg)\to C^k([0,1],\cg)$
is a $C^\infty$-diffeomorphism for each $\eta\in C^k([0,1],\cg)$.
In particular, each $\rho_\eta$ is a homeomorphism.\\[2.3mm]
Since $\Evol$ is a homomorphism of groups
and so is the point evaluation $\ev_1\colon C^{k+1}([0,1],G)\to G$, $\gamma\mapsto\gamma(1)$,
we observe that also
$\evol\colon C^k([0,1],\cg)\to G$ is a homomorphism
with respect to~$*$.\\[2.3mm]
To establish the continuity of $\rho_\eta$ asserted above,
consider the map
\[
h\colon [0,1]\times \cg\to\cg,\quad h(t,y):=\Ad(\Evol(\eta)(t))y+\eta(t)
\]
which is $C^{k+1,0}$ by \ref{baprels}~(c) and~(g).
Then $\rho_\eta$ coincides with the map
\[
h_*\colon C^k([0,1],\cg)\to C^k([0,1],\cg),\quad \gamma\mto h_*(\gamma)
\]
with $h_*(\gamma)(t):=h(t,\gamma(t))$, which is continuous by
\cite[Proposition 3.10]{QUO} (which holds just as well for non-open domains).\\[2.3mm]
{\bf Proof of Theorem~D.}
Let $\gamma,\eta\in C^k([0,1],\cg)$ and assume that the map\linebreak
$\evol\colon C^k([0,1],\cg)\to G$ is continuous at~$\eta$.
For each $g\in G$, the right translation $\rho^G_g\colon G\to G$, $x\mapsto xg$ is a homeomorphism.
Since $\evol$ is a homomorphism, we have $\evol=\rho^G_{\evol(\eta^{-1}*\gamma)}\circ
\evol\circ \rho_{\gamma^{-1}*\eta}$.
As $\rho_{\gamma^{-1}*\eta}$ takes $\gamma$ to $\eta$ and is continuous,
we see that $\evol$ is continuous at~$\gamma$
if it is continuous at~$\eta$.\vspace{2mm}\Punkt

\noindent
{\bf Proof of Theorem E.} The following observation will be used:
\begin{la}\label{ckmfd}
Let $k\in \N$. If $f\colon M\to N$ is a $C^1$-map between $C^{k+1}$-manifolds
such that $Tf\colon TM\to TN$ is $C^k$, then $f$ is $C^{k+1}$.
\end{la}
\begin{proof}
After composing with charts and inverses of charts,
we may assume that $M$ and $N$ are open subsets
of locally convex spaces. By the hypotheses, $f$ is $C^1$ and $df$ (being the second component of $Tf$)
is $C^k$. So $f$ is $C^{k+1}$.
\end{proof}
Now Theorem E is subsumed by the following proposition.
\begin{prop}\label{getsmooth}
Let $k\in \N_0\cup\{\infty\}$ and $G$ be a $C^k$-semiregular Lie group.
\begin{itemize}
\item[\rm(a)]
If the map $\evol\colon C^k([0,1],\cg)\to G$ is continuous, then
$(C^k([0,1],\cg),*)$ is a topological group.
\item[\rm(b)]
If $\evol\colon C^k([0,1],\cg)\to G$ is $C^1$,
then $\evol$ is $C^\infty$ and $(C^k([0,1],\cg),*)$
is a Lie group.
\end{itemize}
\end{prop}
\begin{proof}
The map
\[
h\colon G\times \cg\times \cg\to\cg,\quad
(x,y,z)\mto \Ad(x).y+z
\]
is smooth, entailing that the map
\[
C^k([0,1],h)\colon C^k([0,1],G)\times C^k([0,1],\cg)\times C^k([0,1],\cg)\to C^k([0,1],\cg),
\]
$(\theta,\gamma,\eta)\mto h\circ (\theta,\gamma,\eta)$ is smooth (see \cite[Lemma 3.4 and Proposition 3.10]{QUO},
which hold just as well for manifolds with boundary).
For $\eta\in C^k([0,1],\cg)$, we have
\begin{equation}\label{givestar}
\gamma*\eta^{-1}=\Ad(\Evol(\eta)).\gamma+\eta=C^k([0,1],h)(\Evol(\eta),\gamma,\eta).
\end{equation}
(a) If $\evol\colon C^k([0,1],\cg)\to G$ is continuous, then
the evolution map\linebreak
$\Evol\colon C^k([0,1],\cg)\to C^{k+1}([0,1],G)$ is continuous (by Lemma~\ref{evolEvol}).
Combining this with (\ref{givestar}), we see that the map
\begin{equation}\label{topgpmap}
C^k([0,1],\cg)\times C^ k([0,1],\cg)\to C^k([0,1],\cg),\quad (\gamma,\eta)\mto \gamma*\eta
\end{equation}
is continuous. Thus $(C^k([0,1],\cg),*)$ is a topological group.

(b) We first note that if $\evol$ is $C^m$ for some $m\in \N_0\cup\{\infty\}$,
then also $\Evol\colon C^k([0,1],\cg)\to C^{k+1}([0,1],G)$ is $C^m$ (by Lemma~\ref{evolEvol}).
Hence (\ref{givestar}) shows that the map in (\ref{topgpmap})
is $C^m$ (and hence also the mappings $\eta\mto\eta^{-1}$ and $(\gamma,\eta)\mto\gamma*\eta)$.

By the preceding, it suffices to show that $\evol\colon C^k([0,1],\cg)\to G$ is $C^m$
for all $m\in \N$; then $(C^k([0,1],\cg),*)$ will be a Lie group.
We proceed by induction on $m$. For  $m=1$, we have that $\evol$ is $C^m$ (i.e., $C^1$)
by hypothesis.

If we already know that $\evol$ is $C^m$, then also the map (\ref{topgpmap})
is $C^m$ (as well as $\eta\mto\eta^{-1}$ and $(\gamma,\eta)\mto\gamma*\eta$),
by our preparatory consideration. Notably,
right translations in both $(C^k([0,1],\cg),*)$ and $G$ are diffeomorphisms
and
\[
\evol=\rho_{\evol(\eta)}\circ \evol\circ \rho_{\eta^{-1}},
\]
entailing that
\[
T_\eta\evol=T\rho_{\evol(\eta)}\circ T_0\evol\circ T_\eta\rho_{\eta^{-1}}
\]
is the map taking $(\eta,\gamma)\in T_\eta C^k([0,1],\cg)=\{\eta\}\times C^k([0,1],\cg)$
to
\[
I_\cg((\gamma+\eta)*\eta^{-1}).\evol(\eta).
\]
As $\evol$ is $C^k$ by the inductive hypothesis, the preceding formula shows
that $T\evol$ is a $C^k$-map in $(\eta,\gamma)$.
Hence $\evol$ is $C^{k+1}$, by Lemma~\ref{ckmfd}.
\end{proof}
\section{Proof of Theorem F}\label{secF}
We shall use the following tool:
\begin{la}\label{getC1tool} Let $M$ and $N$ be $C^1$-manifolds modeled on locally convex spaces
and $f\colon M\to N$ be a map.
Then $f$ is $C^1$ if and only if there exists a continuous map $\omega \colon TM\to TN$ with
the following properties:
\begin{itemize}
\item[\rm(a)]
$\omega(T_xM)\sub T_{f(x)}N$ for each $x\in M$;
\item[\rm(b)]
If $\ve>0$ and $\gamma\colon \,]{-\ve},\ve[\,\to M$ is a $C^1$-map,
then $f\circ \gamma$ is $C^1$
with $(f\circ\gamma)'(0)=\omega(\gamma'(0))$.
\end{itemize}
In this case, $Tf=\omega$.
If $M$ is an open subset of a locally convex space~$X$,
it suffices to take paths of the form $\gamma(s)=x+sy$ in {\rm(b)},
for $x\in M$ and $y\in X$.
\end{la}
\begin{proof}
If $f$ is $C^1$, we can take $\omega:=Tf$.
Conversely, assume that~$\omega$ exists.
Using the $0$-section $\theta\colon M\to TM$, $x\mapsto 0_x$
and the bundle projection $\pi_N\colon TN\to N$,
we have $f=\pi_N\circ \omega\circ \theta$ by (a), and thus~$f$ is continuous.
If $\psi\colon P\to Q$ is a chart for~$N$ and $\phi\colon U\to V$ is a chart for $M$
(which we may choose as the identity map
of an open subset $U\sub M$ if $M$ is open in~$X$)
such that $f(U)\sub P$, then $d(\psi\circ f\circ\phi^{-1})(x,y)=\frac{d}{dt}|_{t=0}
\psi((f\circ \phi^{-1})(x+ty))$ exists by~(b) and the Chain Rule, and is given by
\[
d(\psi\circ f\circ\phi^{-1})(x,y)=d\psi(\omega(T\phi^{-1}(x,y)))
\]
and thus continuous in $(x,y)$.
Hence $\psi\circ f\circ \phi^{-1}$ is $C^1$ and hence $f$ is $C^1$.
\end{proof}
\begin{la}\label{alsorsemi}
If $G$ is a $C^0$-semiregular Lie group,
then for each $\gamma\in C^0([0,1],\cg)$
there exists $\Evol^r(\gamma):=\eta\in C^1_*([0,1],G)$
such that $\delta^r(\eta)=\gamma$.
\end{la}
\begin{proof}
The right logarithmic derivative of $\Evol(-\gamma)^{-1}$
is $\gamma$, whence $\Evol^r(\gamma)=\Evol(-\gamma)^{-1}$.
\end{proof}
\begin{la}\label{ifthn}
If $G$ is $C^0$-regular and $\gamma,\eta\in C^0([0,1],\cg)$,
then
\[
\theta\colon \R\to G,\quad s\mto \evol(\eta+s\gamma)
\]
is a smooth curve in $G$ with right logarithmic derivative
\begin{equation}\label{horrib}
\delta^r(\theta)(s)=I_\cg(\Ad(\Evol(\eta+s\gamma)).\gamma)
=I_\cg((\eta+s\gamma+\gamma)*(\eta+s\gamma)^	{-1})
\quad\mbox{for $s\in\R$,}
\end{equation}
where $I_\cg\colon C^0([0,1],\cg)\to \cg$, $\gamma\mto \int_0^1\gamma(t)\,dt$.
\end{la}
\begin{proof}
Since $\evol\colon (C^0([0,1],\cg),*)\to G$ is a smooth homomorphism between Lie groups,
we have
\begin{eqnarray*}
(\delta^r\theta)(s)&=&L(\evol)\delta^r(s\mto \eta+s\gamma)\\
&=&I_\cg d\rho_{(\eta+s\gamma)^{-1}}(\eta+s\gamma,\gamma)\\
&=&I_\cg(\Ad(\Evol(\eta+s\gamma)).\gamma),
\end{eqnarray*}
as asserted.
\end{proof}
{\bf Proof of Theorem F.}
Since $\evol$ is continuous by assumption,
$(C^0([0,1],\cg),*)$ is a topological group by Proposition~\ref{getsmooth}\,(a).
Thus
$\omega\colon TC^0([0,1],\cg)\to TG$,
\begin{eqnarray*}
\omega(\eta,\gamma)&:=&I_\cg(d\rho_{\eta^{-1}}(\eta,\gamma)).\evol(\eta)
=I_\cg((\gamma+\eta)*\eta^{-1}).\evol(\eta)\\
&=&I_{\cg}(\Ad(\Evol(\eta)).\gamma).\evol(\eta)
\end{eqnarray*}
is continuous.
Moreover, $\omega$ takes $T_\eta C^0([0,1],\cg)=\{\eta\}\times C^0([0,1],\cg)$
to $T_{\evol(\eta)}G$.
Hence, by Lemma~\ref{getC1tool}, $\evol$ will be $C^1$ if we can show that,
for all $\gamma,\eta\in C^0([0,1],\cg)$, the curve
\[
\xi\colon \R\to G,\quad \xi(s):=\evol(\eta+s\gamma)
\]
is $C^1$ and satisfies
\begin{equation}\label{willengh}
\xi'(0)=\omega(\eta,\gamma).
\end{equation}
Stimulated by (\ref{horrib}), we use Lemma~\ref{alsorsemi} to find a $C^1$-curve $\theta\colon \R\to G$, $s\mto \theta(s)$
such that $\theta(0)=\evol(\eta)$ and
\[
\delta^r(\theta)(s)=I_\cg(\Ad(\Evol(\eta+s\gamma)).\gamma)
=I_\cg((\eta+s\gamma+\gamma)*(\eta+s\gamma)^{-1}).
\]
For each $j\in J$, we have that
\begin{eqnarray*}
\delta^r(\alpha_j\circ \theta)(s)&=&L(\alpha_j)\delta^r(\theta)(s)\\
&=&L(\alpha_j)I_\cg(\Ad(\Evol(\eta+s\gamma)).\gamma)\\
&=&I_{\ch_j}(L(\alpha_j)\circ \Ad(\Evol(\eta+s\gamma)).\gamma)\\
&=&I_{\ch_j}(\Ad(\Evol(L(\alpha_j)(\eta)+sL(\alpha_j)(\gamma)).L(\alpha_j)(\gamma))
\end{eqnarray*}
This coincides with
\begin{eqnarray*}
\delta^r(\alpha_j\circ \xi)&=& \delta^r(s\mto \alpha_j(\evol(\eta+s\gamma)))\\
&=& \delta^r(s\mto \evol(L(\alpha_j)\circ (\eta+s\gamma)))\\
&=& \delta^r(s\mto \evol(L(\alpha_j)(\eta)+sL(\alpha_j)(\gamma)))\\
&=& I_{\ch_j}(\Ad(\Evol(L(\alpha_j)(\eta)+sL(\alpha_j)(\gamma)).L(\alpha_j)(\gamma))
\end{eqnarray*}
(where we used (\ref{horrib}) for the last equality).
Hence $\alpha_j\circ \theta=\alpha_j\circ \xi$ for all $j\in J$,
entailing (as the $\alpha_j$ separate points)
that $\xi=\theta$ is indeed $C^1$,
with $\xi'(0)=\theta'(0)=(\delta^r\theta)(0).\theta(0)=I_\cg(\Ad(\Evol(\eta)).\gamma).\evol(\eta)
=\omega(\eta,\gamma)$.\Punkt
\begin{rem}\label{variantThmF}
Let $G$ be a $C^0$-semiregular Lie group
whose Lie algebra $\cg:=L(G)$
is an integral complete locally convex space.
Assume that the smooth homomorphisms
from $G$ to $C^0$-regular Lie groups separate
points on $G$ and
\[
\evol\colon C^1([0,1],\cg)\to G
\]
is continuous. Then $G$ is $C^1$-regular.
In fact, we can simply replace
$C^0([0,1],\cg)$ with $C^1([0,1],\cg)$ in the preceding
proof (in particular, we now consider
$\omega\colon TC^1([0,1],\cg)\to TG$).
\end{rem}
\noindent
The first lines of the proof of Theorem~F also show the following.
\begin{prop}\label{moresitu}
Let $k\in \N_0\cup\{\infty\}$ and $G$ be a $C^k$-semiregular Lie group,
with Lie algebra $\cg$. If $k=0$, assume that $\cg$ is integral complete;
if $k\geq 1$, assume that $\cg$ is Mackey-complete.
Assume that
\begin{itemize}
\item[\rm(a)]
$\evol_G\colon C^k([0,1],\cg)\to G$ is continuous;
\item[\rm(b)] The map
$\R\to G$, $t\mto \evol_G(\gamma+t\eta)$
is $C^1$ for all $\gamma,\eta\in C^k([0,1],\cg)$; and
\item[\rm(c)]
$\frac{d}{dt}\Big|_{t=0}\evol_G(\gamma+t\eta)
=
I_{\cg}(\Ad(\Evol_G(\eta))\gamma).\evol_G(\eta)$,
where the dot means multiplication in $TG$ and
$I_{\cg}\colon C^k([0,1],\cg)\to\cg$, $\zeta\mto\int_0^1\zeta(t)\,dt$.
\end{itemize}
Then $G$ is $C^k$-regular.\,\Punkt
\end{prop}
\begin{cor}\label{moredl}
Let $k\in \N_0\cup\{\infty\}$, $(J,\leq)$ be a directed set, $((G_j)_{j\in J}, (\alpha_{i,j})_{i\geq j})$
be a direct system of $C^k$-semiregular Lie groups $G_j$ and smooth homomorphisms
$\alpha_{i,j}\colon G_j\to G_i$.
Let $G$ be a Lie groups and $\alpha_j\colon G_j\to G$ be smooth homomorphisms
for $j\in J$ such that $\alpha_i\circ\alpha_{i,j}=\alpha_j$
for all $i,j\in J$ such that $i\geq j$. Let $\cg:=L(G)$ and $\cg_j:=L(G_j)$.
Assume that each $\gamma\in C^k([0,1],\cg)$
is of the form $L(\alpha_j)\circ\zeta$ for some $j\in J$ and $\zeta\in C^k([0,1],\cg_j)$.
Then $G$ is $C^k$-semiregular. If $\evol\colon C^k([0,1],\cg)\to G$
is continuous and each $G_j$ is $C^k$-regular, then $G$ is $C^k$-regular.
\end{cor}
\begin{proof}
If $\gamma\in C^k([0,1],\cg)$ and $\zeta\in C^k([0,1],\cg_j)$ such that
$\gamma=L(\alpha_j)\circ \zeta$, then $\alpha_j\circ  \Evol_{G_j}(\zeta)$
is the left evolution of $\gamma$.
Hence $G$ is $C^k$-semiregular.
Now assume that $\evol_G$ is continuous.
If $\gamma,\eta\in C^k([0,1],\cg)$,
then there are $i,j\in J$ and $\zeta\in C^k([0,1],\cg_i)$,
$\xi\in C^k([0,1],\cg_j)$ such that $\gamma=L(\alpha_i)\circ\zeta$
and $L(\alpha_j)\circ\xi$. Since $J$ is directed,
there is $\ell\in J$ such that $\ell\geq i,j$.
Let $\bar{\zeta}:=L(\alpha_{\ell,i})\circ\zeta$
and $\bar{\xi}:=L(\alpha_{\ell,j})\circ\xi$.
Then $\bar{\zeta}$, $\bar{\xi}\in C^k([0,1],\cg_\ell)$
and $L(\alpha_\ell)\circ \bar{\zeta}=L(\alpha_\ell)\circ L(\alpha_{\ell,i})\circ\zeta
=L(\alpha_\ell\circ\alpha_{\ell,i})\circ\zeta=L(\alpha_i)\circ\zeta=\gamma$;
likewise, $L(\alpha_\ell)\circ\bar{\xi}=\xi$.
Note that
\[
\theta\colon \R\to G_\ell,\quad \theta(t):=\evol_{G_\ell}(\bar{\zeta}+t\bar{\xi})
\]
is a $C^1$-curve in $G_\ell$ such that $\theta'(t)=I_{\cg_\ell}(\Ad(\Evol_{G_\ell}(\bar{\xi})\bar{\zeta})).\evol_{G_\ell}(\bar{\xi})$
Hence $\alpha_\ell\circ\theta\colon \R\to G$, $t\mto\evol_G(\zeta+t\xi)$
is a $C^1$-curve and $\frac{d}{dt}\Big|_{t=0}\evol_G(\zeta+t\xi)=
T(\alpha_\ell)(\theta'(t))=
I_{\cg}(\Ad(\Evol_{Gl}(\xi)\zeta)).\evol_{G}(\xi)$.
Now apply Proposition~\ref{moresitu}.
\end{proof}
The following side result is established using a similar (but easier) argument.
It is related to the open question of whether
every continuous homomorphism between (possibly infinite-dimensional) Lie groups
is smooth.
\begin{prop}
Let $\alpha\colon G\to H$ be a continuous homomorphism
between Lie groups modelled on locally convex spaces. Assume that
\begin{itemize}
\item[\rm(a)]
$H$ is $C^0$-semiregular;
\item[\rm(b)]
There exists a continuous linear map $\beta\colon L(G)\to L(H)$
and a family $(\alpha_j)_{j\in J}$ of smooth homomorphisms
$\alpha_j\colon H\to H_j$ to Lie groups $H_j$
such that $\alpha_j\circ\alpha\colon G\to H_j$
is smooth and $L(\alpha_j\circ \alpha)=L(\alpha_j)\circ\beta$, for all $j\in J$.
\end{itemize}
Then $\alpha$ is smooth.
\end{prop}
\begin{proof}
Let $\pi_G \colon TG \to G$ be the bundle projection.
The map
\[
\omega\colon TG\to TH,\quad v\mto \alpha(\pi_G(v)).\beta(\pi_G(v)^{-1}.v)
\]
is continuous and takes $T_gG$ to $T_{\alpha(g)}H$.
If $\gamma\colon \,]{-\ve},\ve[\,\to G$ is $C^1$,
then $\eta:=\beta\circ \delta^\ell(\gamma)$
is a continuous curve in $L(H)$. By $C^0$-semiregularity,
there exists a unique $C^1$-map $\theta\colon\,]{-\ve},\ve[\,\to H$
such that $\delta^\ell(\theta)=\eta$ and $\theta(0)=e$.
Then
\begin{eqnarray*}
\delta^\ell(\alpha_j\circ\theta)&=& L(\alpha_j)\circ\delta^\ell(\theta)=L(\alpha_j)\circ\eta\\
&=&
L(\alpha_j)\circ\beta\circ \delta^\ell(\gamma)=L(\alpha_j\circ \alpha)\circ\delta^\ell(\gamma)\\
&=&\delta^\ell((\alpha_j\circ\alpha)\circ\gamma)
\end{eqnarray*}
(using hypothesis (b) for the penultimate equality)
and thus $\alpha_j\circ \theta=\alpha_j\circ \alpha\circ\gamma$.
As the $\alpha_j$ separate points, we deduce that $\alpha\circ\gamma=\theta$, which is $C^1$.
Moreover,
$(\alpha\circ\gamma)'(s)=\theta'(s)=\theta(s).\delta^\ell(\theta)(s)=\alpha(\gamma(t)).\beta(\delta^\ell(\gamma)(s))
=\omega(\gamma'(s))$. Thus $\alpha$ is $C^1$, by Lemma~\ref{getC1tool}.
Like any $C^1$-homomorphism between Lie groups, $\alpha$ is smooth
\cite{HOL}.
\end{proof}
Let $M$ be a $C^k$-manifold modelled
on a locally convex space~$E$. Recall
that a subset $N$ of~$M$
is called a \emph{submanifold}
modelled on a closed vector subspace
$F$ of~$E$
if, for each $x\in N$,
there is a chart $\phi\colon U_\phi\to V_\phi\sub E$
of~$M$ around~$x$ such that $\phi(U_\phi\cap N)=V_\phi\cap F$.
A subgroup $H$ of a Lie group~$G$ is called
a \emph{Lie subgroup} if it is a submanifold of~$G$
in the sense just recalled.
The author is grateful to Alexander Schmeding (Trondheim)
for his suggestion to include the following lemma.
\begin{la}\label{Schmed}
Let $G$ be a $C^k$-regular Lie group with $k\in \N_0\cup\{\infty\}$
and $H\sub G$ be a Lie subgroup.
If $H$ is $C^k$-semiregular,
then $H$ is $C^k$-regular.
\end{la}
\begin{proof}
Let $\lambda\colon H\to G$ be the inclusion map,
$\ch:=L(H)$ and $\cg:=L(G)$.
For each $\gamma\in C^k([0,1],\ch)$,
we have $\delta^\ell(\lambda\circ \Evol_H(\gamma))=L(\lambda)\circ
\delta^\ell(\Evol_H(\gamma))=\lambda\circ \gamma$ and hence
$\lambda\circ \Evol_H(\gamma) = \Evol_G(L(\lambda)\circ \gamma)$.
Thus $\lambda(\evol_H(\gamma))=\evol_G(L(\lambda)\circ \gamma)$
and hence
\[
\evol_H=(\evol_G\circ C^k([0,1],\lambda))|^H.
\]
Since $C^k([0,1],\lambda)\colon C^k([0,1],\ch)\to C^k([0,1],\cg)$, $\gamma\mto
\lambda\circ \gamma$ is continuous linear and hence smooth, we deduce
that
\[
\evol_G\circ C^k([0,1],\lambda)\colon C^k([0,1]\,ch)\to G.
\]
is smooth.
As this map has image inside~$H$ and~$H$ is a submanifold of~$G$, we deduce
that $\evol_H=(\evol_G\circ C^k([0,1],\lambda))|^H$ is smooth
(see \cite{BGN}).
\end{proof}
\begin{la}\label{autoimpr}
Let $\eta\colon [0,1]\to G$ be a $C^1$-curve in a Lie group $G$
such that $\delta^\ell(\eta)\colon [0,1]\to L(G)$
is $C^k$ for some $k\in \N_0$.
Then $\eta$ is $C^{k+1}$.
\end{la}
\begin{proof}
Since $\gamma:=\delta^\ell(\eta)$ is $C^k$,
an easy induction based on the formula
\[
\eta'(t)=\eta(t).\gamma(t)\quad\mbox{for all $t\in [0,1]$}
\]
shows that $\eta'$ is $C^j$ for all $j\in \{0,1,\ldots, k\}$.
Hence $\eta$ is $C^{k+1}$.
\end{proof}
Also the following situation arises frequently.
\begin{la}\label{newfreq}
Let $f\colon G\to H$ be a smooth homomorphism
between Lie groups modelled on locally convex spaces
$E$ and $F$, respectively,
with Lie algebras~$\cg$ and~$\ch$.
Assume that $L(f)\colon \cg\to \ch$ is injective
and $\cg$ is integral complete.
Moreover, assume that
there exists a chart
$\phi\colon U_\phi\to V_\phi\sub E$
of~$G$ around~$1$, a chart $\psi\colon U_\psi\to V_\psi\sub F$ of $H$
around~$1$ and a continuous linear map $\alpha\colon E\to F$
such that $f(U_\phi)\sub U_\psi$ and
\begin{equation}\label{loclinbuin}
\psi\circ f\circ \phi^{-1}=\alpha|_{V_\phi}.
\end{equation}
Let $k\in \N_0\cup\{\infty\}$.
If $\gamma\in C^k([0,1],\cg)$
such that
\[
\Evol_H(L(f)\circ \gamma)=f\circ \eta
\]
for a continuous map $\eta\colon [0,1]\to H$
with $\eta(0)=1$,
then $\eta\in C^{k+1}([0,1],H)$ and $\eta=\Evol_G(\gamma)$.
\end{la}
\begin{proof}
We identify $E$ and $F$ with $\cg$ and $\ch$, respectively,
and may assume that the charts have been chosen with $\phi(1)=0$, $\psi(1)=0$,
$d\phi|_\cg=\id_\cg$ and $d\psi|_\ch=\id_\ch$.
By Lemma~\ref{autoimpr},
it suffices to show that $\eta$ is $C^1$ and $\delta^\ell(\eta)=\gamma$.
There is a partition $0=t_0<t_1<\cdots, t_n=1$ of $[0,1]$
such that $\eta(t_{j-1})^{-1}\eta(t)\in U_\phi$
for all $j\in \{1,\ldots, n\}$
and $t\in [t_{j-1}, t_j]$.
Define
\[
\eta_j\colon [0,1]\to G, \quad \eta_j(t):=\eta(t_{j-1})^{-1}\eta(t_{j-1}+t(t_j-t_{j-1})).
\]
It then suffices to show that
\[
\delta^\ell(\eta_j)(t)=(t_j-t_{j-1})\gamma(t_{j-1}+t(t_j-t_{j-1}))
\]
for all $j\in \{1,\ldots, n\}$ and $t\in [0,1]$. After replacing $\eta$ with $\eta_j$,
we may therefore assume that $\eta([0,1])\sub U_\phi$.
The group multiplication of $G$ induces on $U_\phi$ a structure of local Lie group;
then there is a unique local Lie group structure on~$V_\phi$
making $\phi$ an isomorphism of local Lie groups.
Likewise, $H$ and $\psi$ induce local Lie group structures on $U_\psi$ and $V_\psi$.
Then
\[
\alpha|_{V_\phi}=\psi\circ f|_{U_\phi}\circ \phi^{-1}\colon U_\phi\to V_\phi
\]
is both a homomorphism between local Lie groups,
and the restriction of a continuous linear map.
Given $x\in V_\phi$, let $\lambda_x$ be left translation with~$x$
on its natural domain $D$ in the local Lie group~$V_\phi$, and $\lambda^G_{\alpha(x)}$ be
left translation on its natural domain in~$V_\psi$.
Because $\alpha$ is a homomorphism, we have
\begin{equation}\label{thereonch}
\alpha\circ \lambda^G_x=\lambda_{\alpha(x)}^H\circ \alpha|_D.
\end{equation}
Since $\alpha$ is continuous linear, applying the Chain Rule to (\ref{thereonch}),
we obtain
\[
\alpha\circ d\lambda^G_x =d\lambda^H_{\alpha(x)}\circ (\alpha|_D\times \alpha).
\]
Let $\zeta:=\phi\circ \eta$.
Since $E$ is integral complete,
we can define a $C^1$-curve $\theta\colon [0,1]\to E$ via
\[
\theta(t):=\int_0^t d\lambda_{\phi(\gamma(s))}^G(\gamma(s))\,ds.
\]
Then
\begin{eqnarray*}
\alpha(\theta(t)) &=& \int_0^t \alpha(d\lambda_{\phi(\gamma(s))}^G(\gamma(s))\,ds
= \int_0^t d\lambda_{\alpha(\phi(\gamma(s)))}^H(\alpha(\gamma(s)))\,ds\\
&=& \int_0^t d\lambda_{\psi(f(\gamma(s)))}^H(\alpha(\gamma(s)))\,ds
= \psi(\Evol^H(\alpha\circ \gamma)(t))\\
&=& (\psi\circ f\circ \eta)(t)=\alpha((\phi\circ \eta)(t))
\end{eqnarray*}
and hence $\phi\circ \eta=\theta$, which is $C^1$.
Moreover,
\[
(\phi\circ \eta)'(t)=\theta'(t)=
d\lambda_{\phi(\gamma(t))}^G(\gamma(t))
\]
for all $t\in [0,1]$. Hence
\[
\delta^\ell(\eta)=L(\phi)\delta^\ell(\eta)=\delta^\ell(\phi\circ\eta)
=\gamma
\]
and thus $\eta=\Evol^G(\gamma)$.
\end{proof}
\begin{example}
Recall that a Lie group is called a \emph{BCH-Lie group}
if it is an analytic Lie group whose exponential function
induces an analytic diffeomorphism from an
open $0$-neighbourhood onto an open identity neighbourhood
(see \cite{QUO}, \cite{GaN}, and \cite{SUR}).
If $f\colon G\to H$ is an analytic homomorphism
between BCH-Lie groups, then $f\circ \exp_G=\exp_H\circ L(f)$,
showing that $f$ corresponds to the restriction of the continuous
linear map $L(f)$ to an open $0$-neighbourhood in $L(G)$
on which $\exp_G$ is a diffeomorphism.
The hypothesis (\ref{loclinbuin})
in Lemma~\ref{newfreq} is therefore satisfied.
\end{example}
%
%
%
%
%
%
%
%
%
%
%
%
%
\section{Proof of Theorem G}\label{secG}
We first formulate and prove an analogous result for \emph{local}
Lie groups. Part of Theorem~G then is a special case. 
\begin{numba}
Recall that a \emph{local Lie group} is
a quintuple $(U,D,\mu,\eta,e)$
where $U$ is a smooth manifold modelled
on a locally convex space $E$,
$D\sub U\times U$ is an open subset,
$\mu\colon D\to U$, $(x,y)\mto \mu(x,y)=:xy$ and $\eta\colon U\to U$, $x\mto \eta(x)=:x^{-1}$
are smooth maps and $e\in U$, such that natural axioms are satisfied (see \cite{SUR} or \cite{GaN}
for details
and further basic theory and notation).
In particular, $(x,e), (e,x), (x,x^{-1})\in D$ for all $x\in U$.
If $E$ is a locally convex space over $\K\in \{\R,\C\}$
and $U$, $\mu$ and $\eta$ are $\K$-analytic,
then we speak of a \emph{$\K$-analytic local Lie group}.
We write $L(U):=T_eU$ for the Lie algebra of a local Lie group
and recall that $g^{-1}.y\in L(U)$ can be defined for $g\in U$, $y\in T_g(U)$
as in the case of a (global) Lie group,
using left translation with $g^{-1}$ on some open neighbourhood of~$g$.  
Similarly, we can define $g.y\in T_gU$ for $g\in U$, $y\in L(U)$.
\end{numba}
\begin{numba}\label{deflocreg}
A local Lie group (as before) is called \emph{$C^k$-semiregular}
if there exists an open $0$-neighbourhood
$\Omega\sub C^k([0,1],L(U))$
such that each $\gamma\in \Omega$
has a left evolution $\Evol(\gamma):=\eta\in C^{k+1}([0,1],U)$,
determined by $\eta(0)=e$, $\delta^\ell(\eta)(t):=\eta(t)^{-1}.\eta'(t)=\gamma(t)$ for all $t\in [0,1]$.
As in the case of (global) Lie groups, left evolutions
are always unique.
If $\Omega$ can be chosen such that $\evol\colon\Omega\to U$,
$\gamma\mto \Evol(\gamma)(1)$ is smooth,
then the local group is called \emph{$C^k$-regular}.
\end{numba}
\begin{numba}\label{sublocal}
The validity of $C^k$-regularity or $C^k$-semiregularity is unchanged
if we replace $D$ by any open subset $D'$
for which $(U,D',\mu|_{D'},\eta,e)$ is a local Lie group.
\end{numba}
\begin{numba}\label{opnsub}
Let $(U,D,\mu,\eta,e)$ be a local Lie group.
If $V=V^{-1}$ is an open identity neighbourhood
and $D_V\sub D\cap V\times V$
an open subset with $\mu(D_V)\sub V$
such that $(V,D_V,\mu|_{D_V}^V,\eta|_V^V,e)$ is a local group,
then the latter is called an \emph{open local Lie subgroup}
of the given local Lie group.
For example, we can always take $D_V:=(V\times V)\cap\mu^{-1}(V)$.
It is clear that $U$ is $C^k$-regular if and only if $V$
is $C^k$-regular. If $V$ is $C^k$-semiregular, then also $U$ is $C^k$-semiregular.
\end{numba}
\begin{numba}\label{insidenearcircular}
The mapping $h\colon [0,1]\times \Omega\to C^k([0,1],L(U))$
determined by $h(s,\gamma)(t)=s\gamma(st)$ is continuous
%
%
 and $h([0,1]\times \{0\})=\{0\}$.
Hence, by the Wallace Lemma,
there exists
an open $0$-neighbourhood $W\sub\Omega$ such that $h([0,1]\times W)\sub\Omega$.
For each $\gamma\in W$ and $t\in [0,1]$, we then have
\[
\Evol(\gamma)(s)=\evol(s\gamma(s.))=\evol(h(s,\gamma)).
\]
\end{numba}
\begin{numba}\label{globalegal}
It is well known that a (global) Lie group $G$ is $C^k$-regular as a Lie group
if and only if it is $C^k$-regular as a local Lie group
(see, e.g., \cite{Dah}).
The same argument shows that $G$ is $C^k$-semiregular as a Lie group
if and only if $G$ is $C^k$-semiregular as a local Lie group.
\end{numba}
\begin{numba}
Let $(U_j,D_j,\mu_j,\eta_j,e_j)$ be local Lie groups for $j\in \{1,2\}$.
A map $\alpha\colon U_1\to U_2$ is called a \emph{homomorphism
of local Lie groups} if $\alpha$ is smooth, $\alpha(e_1)=e_2$, $(\alpha\times \alpha)(D_1)\sub D_2$,
$\alpha(x,y)=\alpha(x)\alpha(y)$ for all $(x,y)\in D_1$
and $\alpha(x^{-1})=\alpha(x)^{-1}$ for all $x\in U_1$.
\end{numba}
\begin{prop}\label{thmGloc}
Let $U$ be a local Lie group and
\[
W:=\{x\in U\colon (\forall j\in J)\;\alpha_j(x)=\beta_j(x)\}
\]
for families $(\alpha_j)_{j\in J}$
and $(\beta_j)_{j\in J}$ of smooth homomorphisms
$\alpha_j,\beta_j\colon U\to U_j$ to some local Lie groups~$U_j$.
Let $k\in \N_0\cup\{\infty\}$.Then the following holds:
\begin{itemize}
\item[\rm(a)]
If~$U$ is $C^k$-semiregular and $W$ is $C^m$-initial in~$U$
for some $m\in \N\cup\{\infty\}$
such that $m\leq k+1$, then also~$W$ is $C^k$-semiregular.
\item[\rm(b)]
If $U$ is $C^k$-regular and
$W$ is $C^m$-initial and $C^\infty$-initial in~$U$
for some $m\in \N\cup\{\infty\}$
such that $m\leq k+1$,
then also~$W$ is $C^k$-regular.
\end{itemize}
\end{prop}
Here, a subset $W=W^{-1}$of $U$ containing $e$
is called \emph{$C^m$-initial}
if it admits a local Lie group structure
$(W,D,\mu,\eta,e)$ with properties
like those in the case of $C^m$-initial Lie subgroups.
While the Lie group structure on a $C^m$-initial
Lie subgroup is unique, $D$ is not unique in the current situation.
The proposition holds for each fixed choice of~$D$
(cf.\ also \ref{sublocal}).\\[2mm]
\begin{proof}
(a) By hypothesis, there exists an open $0$-neighbourhood $\Omega\sub C^k([0,1],L(U))$
such that each $\gamma\in \Omega$ admits a $C^{k+1}$-evolution $\Evol(\gamma)\colon [0,1]\to U$.
Let $\iota\colon W\to U$ be the smooth inclusion map.
Then $L(\iota)\colon L(W)\to L(U)$ is an injective, continuous linear (and hence smooth) map
and thus also $L(\iota)_*\colon C^k([0,1],L(W))\to C^k([0,1],L(U))$, $\gamma\mto L(\iota)\circ \gamma$
is continuous linear (cf.\ \cite{QUO}). We identify $L(W)$ with the image $\im\,L(\iota)\sub L(U)$
as a vector space (although the topology may be finer).
By the preceding,
\[
\Omega_W:=\Omega\cap C^k([0,1],L(W))=(L(\iota)_*)^{-1}(\Omega)
\]
is an open $0$-neighbourhood in $C^k([0,1],L(W))$.
Note that
\begin{equation}\label{infteseq}
L(\alpha_j).y=L(\alpha_j)L(\iota).y=L(\alpha_j\circ\iota).y=L(\beta_j\circ\iota).y=L(\beta_j).y
\end{equation}
for all $j\in J$ and $y\in L(W)$.
Given $\gamma\in \Omega_W$, let $\eta:=\Evol(\gamma)\colon [0,1]\to U$.
For each $j\in J$, we have that
\[
\delta^\ell(\alpha_j\circ\eta)=L(\alpha_j)\circ \delta^\ell\eta=L(\alpha_j)\circ \gamma
=L(\beta_j)\circ\gamma=\delta^\ell(\beta_j\circ \eta),
\]
using (\ref{infteseq}) to get the third equality.
Hence $\alpha_j\circ \eta=\beta_j\circ\eta$ for all $j\in J$ and thus
$\eta([0,1])\sub W$. Since $\eta$ is $C^{k+1}$ and thus $C^m$ as a map to $U$
and $W$ is $C^m$-initial, we deduce that $\eta$ is $C^m$ as a map to $W$,
hence $C^1$ as a map to $W$ and hence $C^{k+1}$ as a map to $W$
(using that $\eta'=\eta.\delta^\ell\eta=\eta.\gamma$ is $C^k$ as a map to $TW$).
Thus $W$ is $C^k$-semiregular.

(b) If $U$ is $C^k$-regular, choose $\Omega$ in the proof of~(a) such that
$\evol\colon \Omega\to U$ is smooth. Then also $\evol\circ L(\iota)_*|_{\Omega_W}$
is smooth as a map to $U$ and takes values in $W$.
Thus, by $C^\infty$-initiality, $\evol_W:=\evol\circ L(\iota)_*|_{\Omega_W}^W\colon
\Omega_W\to W$ is $C^\infty$.
\end{proof}
\noindent
{\bf Proof of Theorem G.}
(a) follows from Proposition~\ref{thmGloc}\,(a) and \ref{globalegal}.

(b) Let $\iota\colon H\to G$ be the inclusion map.
We know that $\evol C^k([0,1],\cg)\to G$ is smooth and hence $C^m$.
As in the preceding proof, $\evol_H(\gamma):=\evol(\gamma)\in H$
for all $\gamma\in C^k([0,1],\ch)$.
Thus $\evol_H\colon C^k([0,1],\ch)\to H$ is $C^m$,
as we assume that $H$ is $C^m$-initial.
As a consequence, $\evol_H$ is $C^1$ and thus $\evol_H$ is $C^\infty$,
by Theorem~E.\vspace{2.8mm}\Punkt
\begin{rem}
It should be possible to omit the $C^\infty$-initiality in Proposition~\ref{thmGloc}\,(b),
if we first prove an analogue of Theorem~E for local Lie groups.
However, the author did not want to make the presentation more technical.
\end{rem}
\begin{example}\label{mapstar}
Let $n\in \N$, $k^,\ell\in \N_0\cup\{\infty\}$ and $H$ be a $C^k$-regular Lie group,
modelled on a locally convex space~$E$.
Then also the Lie group
\[
C^\ell_*([{-n},n],H):=\{\gamma\in C^\ell([{-n},n],H)\colon \gamma(0)=e\}
\]
is $C^k$-regular.\\[2.3mm]
In fact, if $U=U^{-1}$ is an open identity neighbourhood in~$H$ and $\phi\colon U\to V\sub E$
a chart for~$H$ such that $\phi(e)=0$, then $\phi_*\colon C^\ell([{-n},n],U)\to C^\ell([{-n},n],V)$, $\gamma\mto\phi\circ\gamma$
is a chart for $C^\ell([{-n},n],H)$.
We shall verify in Proposition~\ref{cpcureg} that $C^\ell([{-n},n],H)$ is $C^k$-regular.
Since
\[
\phi_*(C^\ell([{-n},n],U)\cap C^\ell_*([{-n},n],H))=C^\ell([{-n},n],V)\cap C^\ell_*([{-n}.n],E),
\]
$C^\ell_*([{-n},n],H)$ is a Lie subgroup in $C^\ell([{-n},n],H)$ and hence $C^m$-initial
for each $m\in \N_0\cup\{\infty\}$.
Moreover,
\[
C^\ell_*([{-n},n],H)=\{\gamma\in C^\ell([{-n},n],H)\colon \alpha(\gamma)=\beta(\gamma)\}
\]
with the smooth homomorphisms $\alpha,\beta\colon C^\ell([{-n},n],H)\to H$,
\[
\alpha(\gamma):=\gamma(0),\qquad \beta(\gamma):=e.
\]
Thus Theorem~G applies and shows that $C^\ell_*([{-n},n],H)$
inherits $C^k$-regularity from $C^\ell([{-n},n],H)$.
\end{example}
A more substantial application will be given in Proposition~\ref{gaugereg}.
\section{Application to complexifications and\\
projective limits}\label{cxpl}
\begin{prop}\label{regpl}
Let $(I,\leq)$ be a directed set and $\cS:=((G_i)_{i \in I},(\phi_{i,j})_{i\leq j})$
be a projective system of
Lie groups $G_i$ and smooth homomorphisms $\phi_{i,j}\colon G_j\to G_i$.
Let $G$ be a Lie group, $k\in \N_0\cup\{\infty\}$,
$m\in \N\cup\{\infty\}$ such that $m\leq k+1$ $($e.g., $m=k+1)$
and $\phi_i\colon G\to G_i$
be smooth homomorphisms such that $(G,(\phi_i)_{i\in I})$
is the projective limit of $\cS$ in the category of $C^m$-manifolds
$($possibly with boundary$)$ and $C^m$-maps between them,
and the maps $L(\phi_i)$ separate points on $L(G)$
for $i\in I$.
\begin{itemize}
\item[\rm(a)]
If each Lie group $G_i$ is $C^k$-semiregular, then also $G$ is $C^k$-semiregular.
\item[\rm(b)]
If each Lie group $G_i$ is $C^k$-regular and $(G,(\phi_i)_{i\in I})$ is the projective limit of $\cS$
also in the category of sets and maps,
then $G$ is $C^k$-regular.
\end{itemize}
\end{prop}
\begin{proof}
(a) Let $\gamma\colon [0,1]\to L(G)$ be $C^k$.
Then $L(\phi_i)\circ \gamma\colon [0,1]\to L(G_i)$ is $C^k$ and thus
$\eta_i:=\Evol_{G_i}(L(\phi_i)\circ\gamma)\in C^{k+1}([0,1],G_i)$
can be formed. The maps $\eta_i$ are $C^m$ in particular.
From $\delta^\ell(\phi_{i,j}\circ \eta_j)=L(\phi_{i,j})\circ \delta^\ell(\eta_j)=L(\phi_{i,j})\circ L(\phi_j)\circ \gamma
=L(\phi_{i,j}\circ \phi_j)\circ\gamma=L(\phi_i)\circ\gamma=\delta^\ell(\eta_i)$
we deduce that $\phi_{i,j}\circ \eta_j=\eta_i$.
The universal property of the projective limit therefore provides a unique
$C^m$-map $\eta\colon [0,1]\to G$ such that $\phi_i\circ\eta=\eta_i$
for all $i\in I$. For each $i$, we have $L(\phi_i)\circ\delta^\ell(\eta)=\delta^\ell(\phi_i\circ \eta)=\delta^\ell(\eta_i)=L(\phi_i)
\circ \gamma$. Hence $\delta^\ell(\eta)=\gamma$ (as the $L(\phi_i)$ separate points)
and thus $\eta=\Evol_G(\gamma)$. Since $\eta$ is $C^1$ and $\delta^\ell\eta=\gamma$ is $C^k$,
the map $\eta$ is $C^{k+1}$.

(b) Because $G$ is the projective limit of $\cS$ both as a set and as a $C^m$-manifold,
a map $f\colon N\to G$ on a $C^m$-manifold~$N$ is $C^m$ if and only if $\phi_i\circ f$ is $C^m$
for each $i\in I$.
We have just seen that $\phi_i\circ \Evol_G(\gamma)=\Evol_{G_i}(L(\phi_i)\circ\gamma)$,
whence $\phi_i(\evol_G(\gamma))=\evol_{G_i}(L(\phi_i)\circ\gamma)$.
Using the linear map
\[
L(\phi_i)_*\colon C^k([0,1],L(G))\to C^k([0,1],L(G_i)),\quad \gamma\mto L(\phi_i)\circ\gamma
\]
which is continuous (cf.\ \cite{QUO}) and hence smooth, we can
thus write
\[
\phi_i\circ \evol_G=\evol_{G_i}\circ L(\phi_i)_*.
\]
Thus $\phi_i\circ \evol_G$ is smooth and therefore $C^m$,
and hence $\evol_G$ is $C^m$ (by our preparatory consideration).
Theorem~E now shows that $\evol_G$ is smooth,
completing the proof that $G$ is $C^k$-regular.
\end{proof}
Two special cases with more tangible hypotheses are of particular interest.
\begin{cor}\label{easierplim}
Let $G_1\supseteq G_2\subseteq\cdots$ be a descending sequence of Lie groups
such that the inclusion maps $\phi_{i,j}\colon G_j\to G_i$ are smooth homomorphisms for $i\leq j$.
Assume that $G:=\bigcap_{i\in \N}G_i$ is endowed with a Lie group structure
modelled on a locally convex space $E$, such that the inclusion maps $\phi_i\colon G\to G_i$
are smooth homomorphisms.
Let $E_i$ be the modelling locally convex space of $G_i$.
Assume that $E_j$ is a vector subspace of $E_i$ for $i\leq j$ $($possibly with a finer topology$)$
and
\[
E=\bigcap_{i\in \N}E_i=\pl\, E_i\vspace{-.8mm}
\]
as a locally convex space.
Finally, assume that there exist charts $\psi_i\colon U_i\to V_i\sub E_i$ for $G_i$
around~$e$ and a chart $\psi\colon U\to V\sub E$ for $G$ around $e$,
with $\psi(e)=0$,
such that $U=U_i\cap G$ and $\psi=\psi_i|_U$ for all $i\in I$,
and $U_j=U_i\cap G_j$ and
$\psi_j=\psi_i|_{U_j}$ whenever $i\leq j$.
Then
\[
G=\pl\, G_i\vspace{-.8mm}
\]
in the category of $C^m$-manifolds $($with or without boundary$)$,
for each $m\in \N_0\cup\{\infty\}$.
If $k\in \N_0\cup\{\infty\}$ and each $G_i$ is $C^k$-regular $($resp., $C^k$-semiregular$)$,
then also $G$ is $C^k$-regular $($resp., $C^k$-semiregular$)$.
\end{cor}
\begin{proof}
Let $N$ be a $C^m$-manifold (with or without boundary) and $f_i\colon N\to G_i$ be $C^m$-maps
such that $\phi_{i,j}\circ f_j=f_i$ whenever $i\leq j$, i.e., $f_j(x)=f_i(x)$.
Then $f(x):=f_i(x)$ is independent of $i\in \N$ and takes values in $\bigcap_{i\in \N}G_i=G$.
We claim that each $p\in N$ has an open neighbourhood $P\sub N$
such that $f|_P$ is $C^m$. If this is true, then $f$ is $C^m$
and we deduce that $G=\pl\, G_i$\vspace{-.8mm} in the category of $C^m$-manifolds (with or without boundary).
From $\psi\circ \phi_i=\psi_i|_U$ we deduce that $L(\phi_i)$ is injective for each $i$,
and hence the $L(\phi_i)$ separate points on $L(G)$.
Therefore all hypotheses of Proposition~\ref{regpl}
are satisfied and we get the desired conclusions.
It remains to prove the claim. We first consider the special case that $f(p)=e$.
Since $f_1$ is continuous, there exists and open neighbourhood $P\sub N$
of $p$ such that $f_1(P)\sub U_1$.
Then $f_j(P)=f_1(P)\sub G_j\cap U_1= U_j$
for each $j\in \N$. Thus $\psi_j\circ f_j|_P$ is a $C^m$-map to $E_j$,
for each $j\in \N$. Likewise, $\psi\circ f|_P$ is defined.
Composes with the inclusion map $E\to E_j$, the map $\psi\circ f|_P$
yields the $C^m$-map $\psi_j\circ f_j|_P$. If $N$ is a manifold
without boundary, we may assume that $P$ is an open subset of a locally convex space.
Since $E=\pl\, E_j$\vspace{-.8mm} as a locally convex space,
we deduce from \cite[Lemma~10.3]{BGN} that $\psi\circ f|_P$ is $C^m$.
If $N$ has a boundary, an analogous lemma from \cite{GaN}
for $C^m$-maps on non-open subsets applies.
Now consider the general case, that $f(p)$ is arbitrary.
We then define $g_i(x):=f_i(x)f_i(p)^{-1}$
and $g(x):=f(x)f(p)^{-1}$. Since $g(p)=e$,
$g|_P$ is $C^m$ by the special case already treated,
for an open neighbourhood $P\sub N$ of~$p$.
Thus also $f|_P=g|_Pf(p)$~is~$C^m$.
\end{proof}
\begin{cor}\label{bothgptvs}
Let $(I,\leq)$ be a directed set and $((E_i)_{i\in I},(\phi_{i,j})_{i\leq j})$
be a projective system of locally convex spaces $E_i$ and continuous
linear mappings\linebreak
$\phi_{i,j}\colon E_j\to E_i$. Let
\[
E=\pl\, E_i
\]
as a locally convex space, together with the continuous linear maps
$\phi_i\colon E\to E_i$. Assume that $\mu_i\colon E_i\times E_i\to E_i$
is a smooth map which is the group multiplication of a Lie group
structure on~$E_i$, such that $\phi_{i,j}$ is a homomorphism
of groups for all $i\leq j$.
Then there is a group multiplication $\mu$ on $E$ making it a Lie group.
For $k\in \N_0\cup\{\infty\}$, $(E,\mu)$ is
$C^k$-semiregular if $(E_i,\mu_i)$ is $C^k$-semiregular for each $i\in I$.
Likewise, $(E,\mu)$ is $C^k$-regular if $(E_i,\mu_i)$
is $C^k$-regular for each $i\in I$.
\end{cor}
\begin{proof}
A map $f$ from a $C^m$-manifold $N$ to the projective limit locally convex space $E$
is $C^m$ if and only if $\phi_i\circ f$ is $C^m$ for each
$i\in I$ (we may assume that $N$ is an open subset
of a locally convex space, and apply \cite[Lemma~10.3]{BGN}).
The same holds if $N$ is a manifold with boundary (cf.\ \cite{GaN}).
Hence $E=\pl\, E_i$\vspace{-.8mm} in the category of $C^m$-manifolds (with or without boundary).
Since the $\phi_i$ separate points on the projective limit,
all hypotheses of Proposition~\ref{regpl} are satisfied. 
\end{proof}
\begin{example}\label{Wage}
Let $k\in \N_0$ and $H$ be a $C^k$-regular Lie group.
Then $H$ is regular and hence there is a regular Lie group structure
on $C^\infty(\R,H)$ (as described in \cite{NaW}).
It is know from \cite[Corollary~144]{Alz} that $C^\infty(\R,H)$
is $C^k$-regular. This result was obtained in \emph{op.\,cit.}
as part of a long-winded, general theory. Using Corollary~\ref{bothgptvs},
we now give a very short, alternative proof:
\emph{$C^\infty(\R,H)$ is $C^k$-regular.}\\[2.8mm]
By \cite{NaW},
\[
C^\infty(\R,H)\cong C^\infty_*(\R,H)\rtimes H,
\]
where $C^\infty_*(\R,H):=\{\gamma\in C^\infty(\R,H)\colon \gamma(0)=e\}$
is a regular Lie group isomorphic to $(C^\infty(\R,\ch),*)=:G$ via the isomorphism
\[
\delta^\ell\colon C^\infty_*(\R,H)\to C^\infty(\R,\ch),\quad\gamma\mto\delta^\ell(\gamma).
\]
If we can show that $G$ is $C^k$-regular, then also $C^\infty(\R,H)$ will be $C^k$-regular,
being an extension of $C^k$-regular Lie groups (see \cite{NaS}).
Now $C^\infty_*([{-n},n],H)$ is $C^k$-regular (see Example~\ref{mapstar})
and
\[
\delta^\ell\colon C^\infty_*([{-n},n],H)\to (C^\infty([{-n},n],\ch),*)=:G_n,\quad \gamma\mto \delta^\ell(\gamma)
\]
is an isomorphism of Lie groups (with inverse $\Evol$), by the regularity of $H$.
Hence $H_n$ is $C^k$-regular for each $n\in\N$.
%
%
Thus $G=\pl\,G_n$\vspace{-.8mm} both as a locally convex space and
as a group. Hence also $G$ is $C^k$-regular (as required), by Corollary~\ref{bothgptvs}.
\end{example}
\begin{rem}
Projective limit arguments have a long history in infinite-dimensional
Lie theory (cf.\ \cite{OMO}, \cite{OMBOOK}).
See \cite{Wal}
for another example of their use
in connection with regularity.
\end{rem}
\begin{numba}
Let $E$ be a real locally convex space.
A complex analytic local Lie group $(U^*,D^*,\mu^*,\eta^*,e)$ modelled on $E_\C$
is called a \emph{complexification} of a real analytic local Lie group $(U,D,\mu,\eta,e)$
if $U$ is a real analytic submanifold of $U^*$,
the inclusion map $U\to U^*$ is a homomorphism of local groups
and for each $x\in U$, there exists an open neighbourhood $V\sub U^*$ and a complex analytic diffeomorphism
$\phi\colon V\to W\sub E_\C$ such that $\phi(V\cap U)=W\cap E$.
\end{numba}
\begin{numba}
\emph{Every real analytic local Lie group $(U,D,\mu,\eta,e)$
has an open local Lie subgroup which admits a complexification.}\\[2.5mm]
In fact, there is an open identity neighbourhood $P=P^{-1}$ in $U$
on which a chart $\phi\colon P\to Q$ with $\phi(e)=0$ is defined,
with $Q$ an open $0$-neighbourhood in the modelling space~$E$ of~$U$.
Then $P$ is an open local Lie subgroup
with domain $D_P=(P\times P)\cap\mu^{-1}(P)$
for the multiplication. We may replace $U$ with $P$.
Since we can make $Q$ a local Lie group isomorphic to $P$,
eventually we may assume that $U=Q$ is an open $0$-neighbourhood in~$E$.
Now the real analytic maps $\mu$ and $\eta$ extend to complex analytic maps
$\tilde{\mu}\colon \tilde{D}\to E_\C$
and $\tilde{\eta}\colon \tilde{U}\to E_\C$, respectively,
with $\tilde{D}$ an open neighbourhood of $D$ in $E_\C\times E_\C$ and
$\tilde{U}$ an open neighbourhood of~$U$ in~$E_\C$.
Let $A\sub E_\C$ be an open $0$-neighbourhood such that $A\times A\sub \tilde{D}$.
Let $B\sub A$ be an open connected $0$-neighbourhood such that $B\times B\sub \tilde{\mu}^{-1}(A)$
and $B\sub \tilde{U}\cap\tilde{\eta}^{-1}(\tilde{U})$.
Then $\tilde{\eta}(\tilde{\eta}(x))$ is defined for all $x\in B$.
Since $B$ is connected and meets $U$, and $\eta\circ\eta=\id_U$,
the identity theorem shows that
\[
\tilde{\eta}(\tilde{\eta}(x))=x\quad
\mbox{for all $x\in B$.}
\]
Hence $\tilde{\eta}|_B$ is injective.
Finally, let $W$ be the connected component of $0$ of the open set $B\cap \tilde{\eta}^{-1}(B)=\{x\in B\colon
\tilde{\eta}(x)\in B\}$.
For $x\in B$, we have $\tilde{\eta}(x)\in B$ and $\tilde{\eta}(\tilde{\eta}(x))=x\in B$,
i.e., $\tilde{\eta}(x)\in B\cap\tilde{\eta}^{-1}(B)$.
Thus $\tilde{\eta}(W)\sub B\cap\tilde{\eta}^{-1}(B)$
and hence $\tilde{\eta}(W)\sub W$, by connectedness.
From $W=\tilde{\eta}(\tilde{\eta}(W))\sub \tilde{\eta}(W)\sub W$, we deduce
that $\tilde{\eta}(W)=W$. Thus $\tilde{\eta}|_W\colon W\to W$ is an involutive
complex analytic diffeomorphism. Write $xy:=\tilde{\mu}(x,y)$.
Since all triple products of elements of $W$ are defined and $W$ is connected,
using the identity theorem we see that all axioms for a complex analytic
local Lie group $(W,D_W,\tilde{\mu}|_{D_W},\tilde{\eta}|_W,0)$
are satisfied if we set $D_W:=(W\times W)\cap\tilde{\eta}^{-1}(W)$.
Let $V\sub U$ be an open $0$-neighbourhood such that $V=\eta(V)$,
$V\times V\sub D$ and $V\times V\sub D_W$.
Then $V$ is an open local Lie subgroup of $U$ if we take
$D_V:=(V\times V)\cap\mu^{-1}(V)$ as domain of definition
for the multiplication $\mu|_{D_V}$,
and $W$ is a complexification of $V$.
%
%
\end{numba}
\begin{la}
Let $U$ be a complex analytic local Lie group
which is $C^k$-regular for some $k\in \N_0\cup\{\infty\}$.
Then $\evol\colon C^k([0,1],L(U))\supseteq \Omega\to U$
is complex analytic on some open $0$-neighbourhood in $C^k([0,1],L(U))$.
\end{la}
\begin{proof}
Choose $\Omega\sub C^k([0,1],L(U))$ such that
$\evol\colon \Omega\to U$ is $C^\infty$.
In particular, $\Evol(\eta)\in C^{k+1}([0,1],U)$
exists for each $\eta\in\Omega$,
enabling us to define
\[
C^k([0,1],L(U))\times\Omega\to C^k([0,1],L(U)),\quad (\gamma,\eta)\mto \gamma*\eta
\]
%
%
%
with $\gamma*\eta:=\Ad(\Evol(\eta))^{-1}\gamma-\eta$.
There is an open zero-neighbourhood $\Omega_1\sub\Omega$
such that $\evol(\gamma*\eta)=\evol(\gamma)\evol(\eta)$
for all $\gamma,\eta\in\Omega_1$. For $\eta\in\Omega_1$, the map
$\rho_\eta:=(.)*\eta$ is an invertible is continuous complex affine-linear
with continuous inverse, and hence is a complex analytic
diffeomorphism. Also the right translation $r_{\evol(\eta)}\colon V\to U$
is complex analytic on some identity neighbourhood $V\sub U$. By the preceding,
$
T_\eta\evol \circ T_0\rho_\eta=T_0r_{\evol(\eta)}\circ T_0\evol
$
and thus
\begin{equation}\label{compolin}
T_\eta\evol=T_0r_{\evol(\eta)}\circ T_0\evol\circ (T_0\rho_\eta)^{-1}.
\end{equation}
Here $T_0\evol$ corresponds to the integration map
and therefore is complex linear.
As also the remaining maps on the right-hand side of
(\ref{compolin}) are complex linear, we deduce that so is $T_\eta\evol$.
Being smooth with complex linear tangent maps, $\evol$ is complex
analytic (cf.\ \cite{RES}).
\end{proof}
\begin{prop}\label{regcx}
Let $U$ be a real analytic local Lie group and
$U^*$ be a complex analytic local Lie group which is a complexification for $U$.
Let $k\in \N_0\cup\{\infty\}$.
Then the following conditions are equivalent:
\begin{itemize}
\item[\rm(a)]
$U^*$ is $C^k$-regular.
\item[\rm(b)]
$U$ is $C^k$-regular and the evolution
$\evol\colon \Omega\to U$ $($in~{\rm\ref{deflocreg}}$)$
can be chosen as a real analytic map.
\end{itemize}
\end{prop}
\begin{proof}
(a)$\impl$(b):
After shrinking $U^*$ and $U$, we may assume that $U$ is a real analytic submanifold
of $U^*$ and $U=\{z\in U^*\colon \sigma(z)=z\}$
for an antiholomorphic automorphism $\sigma\colon U^*\to U^*$.
By Proposition~\ref{thmGloc},
$U$ is $C^k$-regular,
with an evolution map which is a restriction
of the complex analytic evolution map for $U^*$
and hence real analytic.

(b)$\impl$(a):
After shrinking $U$ and $U^*$,
we may assume (up to isomorphism)
that $U$ is an open subset of some real locally convex space $E$
and $U^*$ an open subset of $E_\C$. Identify $L(U)=T_0(U)$ with $E$,
and likewise for $U^*$.
Since $\evol$ is real analytic, it admits a complex analytic
extension $h\colon \Omega^*\to E_\C$
to an open superset $\Omega^*$ of $\Omega$ in $C^k([0,1],E_\C)$.
After shrinking $\Omega^*$, we may assume that $h(\Omega^*)\sub U^*$.
After replacing $\Omega^*$ by the union of such, we may assume that each component of
$\Omega^*$ meets $\Omega$.
The map $\delta^\ell\colon C^{k+1}([0,1],U^*)\to C^k([0,1],E_\C)$
is complex analytic, and $\delta^\ell(h(\gamma))=\gamma$ for all $\gamma\in \Omega$.
Hence
$\delta^\ell\circ h=\id_{\Omega^*}$, by the Identity Theorem.
\end{proof}
\begin{cor}
Let $G$ be a real analytic Lie group
and $U^*$ be a complexification of an open local Lie subgroup
$U\sub G$. Let $k\in \N_0\cup\{\infty\}$.
Then the following conditions are equivalent:
\begin{itemize}
\item[\rm(a)]
$U^*$ is $C^k$-regular.
\item[\rm(b)]
$G$ is $C^k$-regular and $\evol\colon C^k([0,1],\cg)\to G$
is real analytic.
\end{itemize}
\end{cor}
\begin{proof}
Combine Proposition \ref{regcx} and the following lemma.
\end{proof}
\begin{la}
Let $k\in \N_0\cup\{\infty\}$ and $G$ be a $C^k$-regular real analytic Lie group.
If $\evol\colon C^k([0,1],\cg)\to G$ is real analytic on some open $0$-neighbourhood,
then $\evol$ is real analytic.
\end{la}
\begin{proof}
By hypothesis, there is $\ell\in \N_0$ such that $\ell\leq k$
and a continuous seminorm $q$ on $\cg$ such that
$\evol$ is real analytic on
\[
Q_0:=\{\gamma\in C^k([0,1],\cg)\colon \|\gamma\|_{C^\ell,q}<1\}.
\]
We show that $\evol$ is real analytic on
$Q_n:=\{\gamma\in C^k([0,1],\cg)\colon \|\gamma\|_{C^\ell,q}<2^n\}$
for each $n\in\N_0$; since $C^k([0,1],\cg)=\bigcup_{n\in\N_0}Q_n$,
the desired real analyticity of $\evol$ then follows.
We proceed by induction on~$n$;
the case $n=0$ is valid by choice of $q$ and $\ell$.
To perform the induction step, note that the maps
$C^k([0,1],\cg)\to C^k([0,1],\cg)$ taking
$\gamma$ to $\gamma_0$ and $\gamma_1$, respectively, are
continuous linear and hence real analytic,
where
\[
\gamma_j(t):=\frac{1}{2}\gamma\left(\frac{j+t}{2}\right)\quad\mbox{for all $t\in[0,1]$
and $j\in\{0,1\}$.}
\]
Moreover, $\|\gamma_0\|_{C^\ell,q},\|\gamma_1\|_{C^\ell,q}\leq\frac{1}{2}\|\gamma\|_{C^\ell,q}$.
If $n\geq 1$ and $\gamma\in Q_n$,
then $\gamma_0,\gamma_1\in Q_{n-1}$
by the preceding estimate.
Note that
\[\Evol(\gamma)(t)=\left\{
\begin{array}{cl}
\Evol(\gamma_0)(2t)&\mbox{if $t\in[0,\frac{1}{2}]$,}\\
\evol(\gamma_0)\Evol(\gamma_1)(2t-1)&\mbox{if $t\in[\frac{1}{2},1]$}
\end{array}\right.
\]
and thus $\evol(\gamma)=\evol(\gamma_0)\evol(\gamma_1)$,
which is real analytic in $\gamma$.
\end{proof}
\section{Regularity if there is a candidate for {\boldmath$\evol$}}
Frequently, we already have a good guess for a mapping $f\colon C^k([0,1],\cg)\to G$
which should be the evolution map $\evol$.
In this section, we provide simple criteria ensuring that indeed $f=\evol$.
\begin{la}\label{lala1}
Let $G$ be a Lie group, $\gamma\in C^0([0,1],\cg)$
and $\eta\in C^1([0,1],G)$.
If there is a family $(\alpha_j)_{j\in J}$
of smooth homomorphisms $\alpha_j\colon G\to G_j$ to
Lie groups $G_j$ such that
\[
(\forall j\in J)\quad
\delta^\ell(\alpha_j\circ \eta)=L(\alpha_j)\circ \gamma
\]
and the maps $L(\alpha_j)$ separate points on $L(G)$ for $j\in J$,
then $\delta^\ell(\eta)=\gamma$.
\end{la}
\begin{proof}
Since also $\delta^\ell(\alpha_j\circ \eta)=L(\alpha_j)\circ \delta^\ell(\eta)$
and the $L(\alpha_j)$ separate ponts,
we obtain $\delta^\ell\eta=\gamma$.
\end{proof}
\begin{la}\label{lala2}
Let $G$ be a Lie group, $k\in \N_0\cup\{\infty\}$ and $f\colon C^k([0,1],\cg)\to G$
be a $C^1$-map such that
\[
[0,1]\to G,\quad s\mto f(s\gamma(s.))
\]
is $C^1$ for each $\gamma\in C^k([0,1],\cg)$.
If there exists a family $(\alpha_j)_{j\in J}$ of smooth
homomorphisms $\alpha_j\colon G\to G_j$ to Lie groups $G_j$
such that
\begin{equation}\label{taeusch}
\alpha_j\circ f=\evol_{G_j}\circ L(\alpha_j)
\end{equation}
and the maps $L(\alpha_j)$ separate points on $\cg$,
then $G$ is $C^k$-regular and $f=\evol$.
\end{la}
The meaning of (\ref{taeusch}) is clear if we assume that each $G_j$ is $C^k$-semiregular.
In the general case, let $\Omega_j$ be the set of all $\gamma\in C^k([0,1],\cg_j)$
such that $\Evol(L(\alpha_j)\circ\gamma)$
exists, and define $\evol_{G_j}\colon \Omega_j\to G_j$,
$\gamma\mapsto \Evol(\gamma)(1)$.
We require that $L(\alpha_j)\circ\gamma\in \Omega_j$ for all
$\gamma\in C^k([0,1],\cg)$ and (\ref{taeusch}) holds.\\[2mm]
\begin{proof}
For each $\gamma\in C^k([0,1],\cg)$ and $j\in J$, we have
\begin{eqnarray*}
\alpha_j(f(s\gamma(s.))&=&\evol_{G_j}(L(\alpha_j)(s\gamma(s,.))=\evol_{G_j}(s(L(\alpha_j\circ\gamma)(s.))\\
&=&
\Evol(L(\alpha_j)\circ\gamma)(s)
\end{eqnarray*}
for each $s\in [0,1]$.
Hence, by Lemma~\ref{lala1}, $(s\mto f(s\gamma(s.)))=\Evol(\gamma)$.
Thus $G$ is $C^k$-semiregular with $\Evol=f$ a $C^1$-map, and thus $G$
is $C^k$-regular (by Theorem~E).
\end{proof}
\begin{la}\label{lala3}
Let $G$ be a Lie group and $f\colon C^0([0,1],\cg)\to G$
be a continuous map such that
\[
[0,1]\to G,\quad s\mto f(s\gamma(s.))
\]
is $C^1$ for each $\gamma\in C([0,1],\cg)$.
If $\cg$ is integral complete and there exists a family $(\alpha_j)_{j\in J}$ of smooth
homomorphisms $\alpha_j\colon G\to G_j$ to $C^0$-regular Lie groups $G_j$
such that
\begin{equation}\label{taeusch2}
\alpha_j\circ f=\evol_{G_j}\circ L(\alpha_j)
\end{equation}
and the maps $L(\alpha_j)$ separate points on $\cg$,
then $G$ is $C^0$-regular and $f=\evol$.
\end{la}
\begin{proof}
As in the preceding proof, we see that $G$ is $C^0$-regular and $f=\evol$.
Now Theorem F applies.
\end{proof}
\section{Regularity of weak direct products}\label{sum}
Let $(G_j)_{j\in J}$
be a family of Lie grpoups $G_j$, with modelling space $E_j$.
Let
\[
E:=\bigoplus_{j\in J}E_j:=\{(x_j)_{j \in J}\in\prod_{j\in J}E_j\colon\mbox{$x_j=0$
for all but finitely many $j$}\}
\]
be the direct sum of the given locally convex spaces,
endowed with the locally convex direct sum topology.
Then
\[
G:=\bigoplus_{j\in J}G_j:=\{(x_j)_{j \in J}\in\prod_{j\in J}G_j\colon\mbox{$x_j=e$
for all but finitely many $j$}\}
\]
is a group under pointwise multiplication.
If $\phi_j\colon U_j\to V_j$ is a chart for $G_j$ defined on
an open identity neighbourhood $U_j=U_j^{-1}$ with $\phi(e)=0$,
then $G$ can be given a Lie group structure modelled on~$E$
such that
\[
\phi:=\oplus_{j\in J}\phi_j\colon \bigoplus_{j\in J}U_j\to \bigoplus_{j\in J}V_j,\quad (x_j)_{j\in J}\mto
(\phi_j(x_j))_{j\in J}
\]
is a chart around the identity element (cf.\ \cite{MEA}).\\[2.3mm]
Let $\pi_i\colon E\to E_i$, $(x_j)_{j\in J}\mto x_i$
be the projection onto the $i$-th component, for $i\in J$.
We shall use the following fact (see \cite{DIF} or \cite{Alx}):
\begin{numba}\label{numbacompmap}
The map
\begin{equation}\label{compmap}
\Phi\colon C^k([0,1],E)\to\bigoplus_{j\in J}C^k([0,1],E_j),\quad f\mto (\pi_j\circ f)_{j\in J}
\end{equation}
sending $f$ to its family of components $f_j:=\pi_j\circ f$ is an isomorphism
of vector spaces.
If $J$ is countable and $k\in \N_0$ is finite, then $\Phi$ is an isomorphism
of topological vector spaces.
\end{numba}
\begin{prop}\label{weakreg}
Let $k\in \N_0\cup\{\infty\}$ and $G:=\bigoplus_{j\in J}G_j$ be
the weak direct product of a family $(G_j)_{j\in J}$ of Lie groups,
with Lie algebra $\cg:=L(G)$. 
\begin{itemize}
\item[\rm(a)]
If $G_j$ is $C^k$-semiregular for each $j\in J$,
then also the weak direct product $G:=\bigoplus_{j\in J}G_j$
is $C^k$-semiregular.
\item[\rm(b)]
If $J$ is countable, $k<\infty$ and $G_j$ is $C^k$-regular for each $j\in J$,
then also $\bigoplus_{j\in J} G_j$
is $C^k$-regular.
\item[\rm(c)]
If $G_j$ is $C^k$-regular for each $j\in J$
and $\evol\colon C^k([0,1],\cg)\to G$ is continuous,
then $G$ is $C^k$-regular.
\end{itemize}
\end{prop}
\begin{proof}
Let $\pi_j\colon G\to G_j$ be the projection onto the $j$th component.
Let $I$ be the set of all finite subsets $F\sub J$
and set $G_F:=\prod_{j\in F}G_j$ for $F\in J$,
which is a Lie subgroup of~$G$.
Let $\iota_F\colon G_F\to G$ be the inclusion map.
Then $G=\bigcup_{F\in I}G_F$
is a directed union. We can identify $L(G)$ with $\bigoplus_{j\in J}L(G_j)$
by means of the Lie algebra homomorphism $(L(\pi_j))_{j\in J}$,
which is an isomorphism of topological vector spaces
by the above discussion.
If $\gamma\in C^k([0,1],\cg)$, then $\gamma([0,1])\sub G_F$
for some $F\in I$, the co-restriction
$\gamma|^{L(G_F)}\colon [0,1]\to L(G_F)=\prod_{j\in F} L(G_j)$
is $C^k$
and $\gamma=\iota_F\circ \gamma|^{L(G_F)}$
(cf.\ \ref{compmap})).
Thus (a) and (c) follow from Corollary~\ref{moredl}.

(b) Step 1. We may assume that $J=\N$. Let $E_n$ be the modelling space of $G_n$ and
$\phi=\oplus_{n\in \N}\phi_n$ be the chart from above (with $J:=\N$).
Identifying $\cg$ with $E=\bigoplus_{n\in\N}E_n$ by means of the isomorphism $d\phi|_\cg$
and $\cg_n$ with $E_n$ by means of $d\phi_n|_{\cg_n}$,
the map $L(\pi_n)$ corresponds to the projection
$\pr_n\colon E\to E_n$ onto the $n$-th component.
If $\gamma=(\gamma_n)_{n\in\N}\in C^k([0,1],E)$,
then there exists a finite set $F\sub \N$ such that $\gamma_n=0$ for all $n\in \N\setminus F$.
Then
\[
\eta:=(\Evol(\gamma_n))_{n\in F}\colon [0,1]\to\prod_{n\in F}G_n=:P
\]
is the evolution of $\gamma$, considered as a $C^k$-map to $L(P)$.
Since $P$ is a Lie subgroup of $G$, we see that $\Evol(\gamma)=\eta$.

Step 2. As each of the maps $\evol_{G_n}\colon C^k([0,1],E_n)\to G_n$
is smooth, also the map
\[
\oplus_{n\in\N}\evol_{G_n}\colon \bigoplus_{n\in\N} C^k([0,1],E_n)\to\bigoplus_{n\in\N}G_n=G,\quad
(\gamma_n)_{n\in\N}\mto(\evol_{G_n}(\gamma_n))_{n\in\N}
\]
is smooth~\cite{MEA}.
%
%
Hence also the map $f:=\big(\oplus_{n\in \N}\evol_{G_n}\big)\circ\Phi\colon
C^k([0,1],E)\to G$ is smooth, and its satisfies
$\pi_n\circ f=\evol_{G_n}\circ\pr_n$ by contruction (where the $\pr_n$
separate points on $E$). Finally, for $\gamma$ and $\eta$ as in Step~1,
\[
f(s\gamma(s.))=(\evol_{G_n}(s\gamma_n(s.)))_{n\in\N}=(\Evol_{G_n}(\gamma_n)(s))_{n\in\N}=\eta(s)
\]
is a $C^1$-map in $s$. Thus all hypotheses of Lemma~\ref{lala2}
are satisfied and thus $G$ is $C^k$-regular with $f=\evol_G$.
\end{proof}
\begin{rem}
Assume that $E_j\not=\{0\}$
for all $j\in J$. If $J$ is uncountable or $k=\infty$
then the map (\ref{compmap}) is not continuous
(see, e.g., \cite{Alx} and \cite{DIF}, respectively).
Therefore the method of the preceding proof
breaks down.
\begin{itemize}
\item[(a)]
The author therefore does not know
if
$\bigoplus_{j\in J}G_j$ is regular
if the $G_j$ are merely regular
(but not $C^k$-regular for a fixed $k\in \N_0$).
\item[(b)]
The author also does not know whether $\bigoplus_{j\in J}G_j$
is $C^k$-regular if $J$ is uncountable and
each $G_j$ a $C^k$-regular Lie group
(where $k\in \N_0$).
A special case where this is true will be discussed in Section~\ref{secergae}.
It is also known that $\bigoplus_{j\in J}G_j$ will be
$C^{k+1}$-regular in the preceding situation
(see Corollary~\ref{alsodoch}).
\end{itemize}
\end{rem}
\section{Regularity of mapping groups and\\
gauge groups}\label{gauge}
In this section, we establish $C^k$-regularity for some important
classes of Lie groups in larger generality than recorded previously.
The proofs illustrate the typical use of many of the tools
and techniques from the previous sections.\\[2.3mm]
The following result was announced in~\cite{OWR};
the arguments were then also spelled out in~\cite{Alz}.
\begin{prop}\label{cpcureg}
Let $M$ be a compact smooth manifold $($with or without smooth boundary$)$, $k,\ell\in \N_0\cup\{\infty\}$
and $H$ be a $C^k$-regular Lie group.
Then also $G:=C^\ell(M,H)$ is $C^k$-regular.
\end{prop}
\begin{proof}
Let $E$ be the modelling space of $H$ and $\phi\colon U=U^{-1}\to V\sub E$
be a chart around $e$ such that $\phi(e)=0$.
Then $\phi_*\colon C^\ell(M,U)\to C^\ell(M,V)$, $\gamma\mto\phi\circ\gamma$ is a chart
for~$G$. For $x\in M$, let $\ve_x\colon C^\ell(M,H)\to H$
and $e_x\colon C^\ell(M,E)\to E$ be the point evaluation at $x$ sending
$\gamma$ to $\gamma(x)$.
Identifying $\ch$ with $E$ by means of $d\phi|_\ch$ and
$C^\ell(M,E)$ with $\cg$ by means of $d(\phi_*)|_\cg$,
we have $L(\ve_x)=e_x$.
If $\gamma\in C^k([0,1], C^\ell(M,\ch)$,
then $\Phi(\gamma):=\gamma^\wedge\colon [0,1]\times M\to \ch$
is a $C^{k,\ell}$-map and the map
\[
\Phi\colon C^k([0,1], C^\ell(M,E))\to C^{k,\ell}([0,1]\times M,E)
\]
so obtained an isomorphism of locally convex spaces (see \cite{Alz} or \cite{AS}).
Also the mappings
\[
\Psi\colon C^{k,\ell}([0,1],M,E)\to C^{\ell,k}(M\times [0,1],E),\quad \Psi(\gamma)(x,s):=\gamma(s,x)
\]
and
\[
\Theta\colon C^{\ell,k}(M\times [0,1],E)\to C^\ell(M,C^k([0,1],E)),\quad \gamma\mto \gamma^\vee
\]
are isomorphisms of locally convex spaces (see \emph{loc.\,cit.}),
and hence so is $\Theta\circ \Psi\circ\Phi\colon
C^k([0,1],C^\ell(M,E))\to C^\ell(M,C^k([0,1],E))$.
Similarly, the map
\[
\Gamma\colon C^{k+1}(M,C^\ell([0,1],H)\to C^\ell([0,1],C^{k+1}(M,H)), \;
\Gamma(\gamma)(s,x)=\gamma(x)(s)
\]
is an isomorphism of Lie groups~\cite{Alz}.
Finally, because $\Evol_H\colon C^k([0,1],\ch)\to C^{k+1}([0,1],H)$ is smooth, also the map
\[
(\Evol_H)_*\colon C^\ell(M,C^k([0,1],E))\to C^\ell(M,C^{k+1}([0,1],H))
\]
is smooth (cf.\ \cite{QUO}). Hence $h:=\Gamma\circ (\Evol_H)_*\circ \Theta\circ\Psi\circ\Phi$
is a smooth map $C^k([0,1],C^\ell(M,E))\to C^{k+1}([0,1],C^\ell(M,H))$.
As the smooth homomorphisms $\ve_x$ separate points on $C^\ell(M,H)$ and
\[
C^{k+1}([0,1],\ve_x)\circ h=\Evol_H\circ C^k([0,1],e_x)
\]
for all $x$,
we deduce that $h=\Evol_G$ for $G:=C^\ell(M,H)$.
\end{proof}
\begin{numba}\label{preparetestfu}
If $M$ is a paracompact, finite-dimensional smooth manifold,
$H$ a Lie group and $\ell\in\N_0\cup\{\infty\}$,
then the group $C^\ell_c(M,H)$ of compactly supported $H$-valued $C^\ell$-maps
on~$M$ is a Lie group which can be identified with the Lie subgroup
\[
S:=\{(\gamma_j)_{j\in J}\in\prod_{j\in J}C^\ell(M_j,H)\colon (\forall i,j\in J)(\forall x\in M_i\cap M_j)\, \gamma_i(x)=\gamma_j(x)\}
\]
of the weak direct product $\bigoplus_{j\in J}C^\ell(M_j,H)$,
for $(M_j)_{j\in J}$ a locally finite family of compact submanifolds
with boundary of $M$ whose interiors cover~$M$.
Hence, by Theorem~G, the Lie group $C^\ell_c(M,H)$ will be $C^k$-regular
whenever we can show that $\bigoplus_{j\in J}C^\ell(M_j,H)$ is $C^k$-regular.
\end{numba}
\begin{prop}\label{testfreg}
If $M$ is a paracompact, finite-dimensional smooth
manifold and $H$ a Lie group which is $C^k$-regular for some
$k\in\N_0$, then $C^\ell_c(M,H)$ is $C^{k+1}$-regular
for each $\ell\in \N_0\cup\{\infty\}$.
If $M$ is, moreover, $\sigma$-compact, then $C^\ell_c(M,H)$ is $C^k$-regular.
\end{prop}
\begin{proof}
Assume that $M$ is $\sigma$-compact.
By Propositions~\ref{weakreg}\,(b) and \ref{cpcureg},
the weak direct product $\bigoplus_{j\in J}C^\ell(M_j,H)$
is $C^k$-regular and hence also $C^\ell_c(M,H)$, by \ref{preparetestfu}.
If $M$ is merely paracompact, we can argue in the same
way, using Corollary~\ref{alsodoch} to get $C^{k+1}$-regularity
of the weak direct product.
\end{proof}
\begin{prop}\label{gaugereg}
Let $P\to M$ be a smooth principal bundle
over a $\sigma$-compact finite-dimensional smooth
manifold $M$, whose structure group is a Lie group $G$
modelled on a locally convex space.
Assume that the condition $SUB_\oplus$
from {\rm\cite{Sch}} is satisfied
and $H$ is $C^k$-regular
for some $k\in \N_0\cup\{\infty\}$.
Let
$\Aut_c(P)$ be the Lie group of all compactly supported symmetries of $P$
{\rm(as constructed in \cite{Sch}).}
If $M$ is compact or $k<\infty$, then
$\Gau_c(P)$ and $\Aut_c(P)$ are $C^k$-regular Lie group.
\end{prop}
\begin{proof}
By \cite[Lemma~4.12]{Sch}, the gauge group $\Gau_c(P)$
is isomorphic to a Lie subgroup $S$ of the weak direct product
$\bigoplus_{n\in \N}C^\infty(\wb{V_i},G)$
for a suitable locally finite cover of $M$ by compact submanifolds
$\wb{V_i}$ with corners. The subgroup is the equalizer
of smooth homomorphisms:
It consists of all $(\gamma_n)_{n\in\N}$ in the weak direct product
such that $k_{ij}(x)\gamma_i(x)k_{ji}(x)=\gamma_j(x)$
for all
$i,j\in\N$ and $x\in \wb{V_i}\cap\wb{V_j}$
(with suitable elements $k_{i,j}\in G$).
Hence $S$ (and thus also $\Gau_c(P)$)
is $C^k$-regular, by Proposition~\ref{weakreg}
and Theorem~G.
Since $\Aut_c(P)$ is an extension
of an open subgroup of $\Diff_c(M)$ (which is $C^0$-regular,
see \cite{Alx}, cf.\ \cite{DIF})
by the $C^k$-regular Lie group $\Gau_c(P)$,
it follows that $\Aut_c(P)$ is $C^k$-regular
(as $C^k$-regularity is an extension property~\cite{NaS}).
\end{proof}
\section{Regularity properties of uncountable weak direct
products}
It is useful to strengthen the concept of $C^\ell$-regularity by weakening
the topology on the space of curves in the Lie algebra.
\begin{defn}\label{def2parreg}
Let $\ell\in \N_0\cup\{\infty\}$
and $\cO$ be a functional class which assigns to each
locally convex space $E$ a locally convex vector topology
$\cO(E)$ on $C^\ell([0,1],E)$ which is coarser
than the compact-open $C^\ell$-topology.
We say that a Lie group $G$ is $(C^\ell,\cO)$-\emph{regular}
if $G$ is $C^\ell$-semiregular and
\[
\evol_G\colon \big(C^\ell([0,1],\cg),\cO(\cg)\big)\to G
\]
is smooth, where $\cg:=L(G)$.
To make the notation less heavy, we frequently write $\cO$ instead of $\cO(E)$.
\end{defn}
\begin{example}
Let $k,\ell\in \N_0\cup\{\infty\}$ such that $k\leq \ell$
and $G$ be a Lie group with Lie algebra~$\cg$.
Then $G$ is $(C^\ell,\cO_{C^k})$-regular
if and only if $G$ is $C^\ell$-semiregular and
$\evol_G\colon \big(C^\ell([0,1],\cg),\cO_{C^k}\big)\to G$
is smooth.
\end{example}
Further instances of Definition~\ref{def2parreg} are useful.
The current section hinges on a notion of
$(C^\ell,\cO_{C^k_+})$-regular Lie groups,
and Section~\ref{secergae}
will involve a notion of $(C^0,\cO_{L^1})$-regularity.
These involve topologies defined as follows:
\begin{defn}
If $E$ is a locally convex space
and $q$ a continuous seminorm on $E$,
we define
\[
\|\gamma\|_{L^1,q}:=\int_0^1q(\gamma(t))\, dt
\]
for $\gamma\in C([0,1],\cg)$.
Then $\|.\|_{L^1,q}$
is a seminorm on $C([0,1],\cg)$
which is continuous with respect to the $C^0$-topology
(as $\|\gamma\|_{L^1,q}\leq \sup\{q(\gamma(t))\colon t\in [0,1]\}$).
We let $\cO_{L^1}$ be the locally convex vector topology
on $C([0,1],E)$ defined by the seminorms $\|.\|_{L^1,q}$,
for $q$ ranging through the set of all continuous seminorms on~$E$.
If $\ell\in \N_0\cup\{\infty\}$ and $k\in \N_0$ such that $k<n$,
we write $\cO_{C^k_+}$ for the topology on $C^\ell([0,1],E)$
which is initial with respect to the
inclusion map
$C^\ell([0,1],E)\to (C^k([0,1],E),\cO_{C^k})$
and the mapping
\[
C^\ell([0,1],E)\to (C([0,1],E),\cO_{L^1}),\quad\gamma\mto\gamma^{(k+1)}.
\]
Thus $\cO_{C^k}\sub \cO_{C^k_+}\sub \cO_{C^\ell}$ on $C^\ell([0,1],E)$.
\end{defn}
\begin{rem}
Let $G$ be a Lie group.
\begin{itemize}
\item[(a)]
Let $\ell$ and $\cO$ be as in Definition~\ref{def2parreg}.
If $G$ is $(C^\ell,\cO)$-regular, then $G$
is $C^\ell$-regular.
\item[(b)]
If $G$ is $C^k$-regular with $k\in \N_0$, then $G$ is $(C^\ell,\cO_{C^k_+})$-regular
for each $\ell\in \N\cup\{\infty\}$ such that $\ell>k$.
\item[(c)]
The notion of $C^k$-regularity used in this article
was called "strong $C^k$-regularity'' in some
of the author's older works,
while the old notion of "$C^k$-regularity''
is $(C^\infty,\cO_{C^k})$-regularity in the current
terminology.
\end{itemize}
\end{rem}
We prove the following result:
\begin{thm}\label{great}
Let $\ell \in \N\cup\{\infty\}$, $k \in \N_0$ such that $k<\ell$
and $(G_j)_{j\in J}$ be a family of $(C^\ell,\cO_{C^k_+})$-regular Lie groups,
with Lie algebra $\cg_j$.
Then also $G:=\bigoplus_{j\in J}G_j$
is a $(C^\ell,\cO_{C^k_+})$-regular Lie group
and hence $G$ is $C^\ell$-regular.
\end{thm}
Taking $\ell:=k+1$, we deduce:
\begin{cor}\label{alsodoch}
Let $k\in \N_0$ and $(G_j)_{j\in J}$ be a family of $C^k$-regular Lie groups.
Then $\bigoplus_{j\in J}G_j$ is $C^{k+1}$-regular.\,\Punkt
\end{cor}
If $M$ is a paracompact finite-dimensional smooth manifold, consider the Lie algebra
$\cV_c(M)$ of compactly supported smooth vector fields on $M$
as the locally convex direct limit of the spaces $\cV_K(M)$
of smooth vector fields supported in a compact set $K\sub M$
(where $\cV_K(M)$ is endowed with the compact-open $C^\infty$-topology).
\begin{cor}\label{cordffeo}
Let $M$ be a paracompact finite-dimensional smooth manifold
and $\Diff(M)$ be the Lie group of all $C^\infty$-diffeomorphisms of~$M$,
modelled on the locally convex space $\cV_c(M)$.
\begin{itemize}
\item[\rm(a)]
If $M$ is $\sigma$-compact,
then $\Diff(M)$ is $C^0$-regular.
\item[\rm(b)]
If $M$ is not $\sigma$-compact, then
$\Diff(M)$ is $C^1$-regular.
\end{itemize}
\end{cor}
\begin{proof}
(a)
This is a special case of the corresponding result for
diffeomorphism groups of $\sigma$-compact orbifolds in \cite{Alx} (cf.\ also the author's
unpublished preparatory work \cite{DIF}
and later improvements thereof).

(b) Let $(M_j)_{j\in J}$ be the family of pairwise distinct connected components of $M$.
Then $M_j$ is a $\sigma$-compact, open submanifold of $M$ for each $j\in J$ and
thus $\Diff(M_j)$ is $C^0$-regular, by (a).
Therefore the weak direct product
$U:=\bigoplus_{j\in J}\Diff(M_j)$ is $C^1$-regular, by Corollary~\ref{alsodoch}.
Since $U$ is an open subgroup of $\Diff(M)$ by construction of the Lie group
structure thereon, we deduce that also $\Diff(M)$ is $C^1$-regular.
\end{proof}
The same argument shows that diffeomorphism groups of paracompact
orbifolds (as studied in \cite{Alx}) are $C^1$-regular.\\[2.3mm]
The following three lemmas are the key for our proof of Theorem~\ref{great}.
\begin{la}\label{topviaL}
Let $k\in \N_0$ and $E$ be an integral complete locally convex space.
Then
\[
(C^{k+1}([0,1],E),\cO_{C^k_+})\cong E^{k+1}\times (C([0,1],E),\cO_{L^1})
\]
as a locally convex space via $\gamma\mto (\gamma(0),\gamma'(0),\ldots,\gamma^{(k)}(0),\gamma^{(k+1)})$.
\end{la}
\begin{proof}
Let $h_k$ be the map described in the lemma.
%
%
The topology on $(C^{k+1}([0,1],E),\cO_{C^k_+})$ is initial with respect
to the inclusion map $C^{k+1}([0,1],E)\to (C([0,1],E),\cO_{C^0})$
and the map $C^{k+1}([0,1],E)\to (C^k([0,1],E),\cO_{C^{k-1}_+})$, $\gamma\mto\gamma'$
(where $\cO_{C^{-1}_+}:=\cO_{L^1}$).
As a consequence, the linear map
\[
f_k\colon (C^{k+1}([0,1],E),\cO_{C^k_+})\to E\times (C^k([0,1],E),\cO_{C^{k-1}_+}),\;\;
\gamma\mto (\gamma(0),\gamma')
\]
is continuous, for each $k\in\N$.
Now consider the linear map
\[
g_k\colon E\times (C^k([0,1],E),\cO_{C^{k-1}_+})\to (C^{k+1}([0,1],E),\cO_{C^k_+})
\]
determined by
$g_k(y,\gamma)(t)=y+\int_0^t\gamma(s)\,ds$.
Note that $g_k(y,\gamma)'=\gamma$ is continuous in $(y,\gamma)$ as a map
to $(C^k([0,1],E),\cO_{C^{k-1}_+})$ and
\[
\sup_{t\in[0,1]}q(g_k(y,\gamma)(t))\leq q(y)+\|\gamma\|_{L^1,q}
\]
for each continuous seminorm $q$ on $E$,
entailing that $g_k$ is continuous as a map to $(C([0,1],E),\cO_{C^0})$.
We now deduce with the initiality property just mentioned that
$g_k$ is continuous. By the Fundamental Theorem of Calculus, both, $f_k\circ g_k$ and $g_k\circ f_k$
is the respective identity map. Hence $f_k$ is invertible and $f_k^{-1}=g_k$ is continuous.
In particular, $h_0=f_0$ is an isomorphism of topological vector spaces.
If $k\geq 1$, then
$h_k(\gamma)=(\gamma(0),h_{k-1}(\gamma'))$
and thus
\begin{equation}\label{hkfk}
h_k=(\id_E\times h_{k-1})\circ f_k.
\end{equation}
Since $h_{k-1}$
is an isomorphism of topological vector spaces
by induction, and $f_k$ is an isomorphism of topological vector spaces,
we deduce with (\ref{hkfk}) that also $h_k$ is an isomorphism of topological vector spaces. 
\end{proof}
\begin{la}\label{keylemsum}
Let $(E_j)_{j\in J}$ be a family of locally convex spaces.
Give $\bigoplus_{j\in J}E_j$ and
\[
\bigoplus_{j\in J}\left(C([0,1],E_j),\cO_{L^1}\right)
\]
the locally convex direct sum topology.
Let
\[
\Psi\colon (C([0,1],{\textstyle\bigoplus_{j\in J}}E_j),\cO_{L^1})\to\bigoplus_{j\in J}\big(C([0,1],E_j),\cO_{L^1}\big)
\]
be the map taking a function to its family of components.
Then $\Psi$ is an isomorphism of topological vector spaces.
\end{la}
\begin{proof}
We know from \ref{numbacompmap} that $\Psi$ is an isomorphism of vector spaces.
A typical continuous seminorm on the range of $\Psi$ is of the form
\[
q\colon \bigoplus_{j\in J}C([0,1],E_j)\to[0,\infty[,\quad
q((\gamma_j)_{j\in J}:=
\sum_{j\in J}\|\gamma_j\|_{L^1,q_j}
\]
with continuous seminorms $q_j\colon E_j\to[0,\infty[$.
Now 
\[
p\colon \bigoplus_{j\in J}E_j \to[0,\infty[, \quad p((x_j)_{j\in J}):=\sum_{j\in J}q_j(x_j)
\]
is a continuous seminorm on $E:=\bigoplus_{j\in J}E_j$. If $\gamma=(\gamma_j)_{j\in J}\in
C([0,1],E)$, then there is a finite set $F\sub J$ such that $\gamma_j=0$ for all
$j\in J\setminus F$. Then
\begin{eqnarray*}
q\Big(\Psi(\gamma)\Big)
&=&\sum_{j\in J}\|\gamma_j\|_{L^1,q_j}
=\sum_{j\in F}\int_0^1q_j(\gamma_j(t))\,dt\\
&=&\int_0^1\sum_{j\in F}q_j(\gamma_j(t))\,dt
=\int_0^1 p(\gamma(t))\,dt=\|\gamma\|_{L^1,p}.
\end{eqnarray*}
Hence $\Psi$ is continuous
and actually a topological embedding (since $\|.\|_{L^1,p}$
with $p$ as just encountered is a typical
seminorm on the domain of $\Psi$.
\end{proof}
\begin{la}\label{keyla3x}
Let $\ell\in\N\cup\{\infty\}$, $k\in \N_0$ such that $k< \ell$
and $(E_j)_{j\in J}$ be a family of locally convex
spaces. Endow $E:=\bigoplus_{j\in J} E_j$
and
\[
\bigoplus_{j\in J}\big(C^\ell([0,1],E_j),\cO_{C^k_+}\big)
\]
with the locally convex direct sum topology. Let
\[
\Phi\colon \left( C^\ell\big([0,1],{\textstyle\bigoplus_{j\in J}E_j}\big),\cO_{C^k_+}\right)
\to\bigoplus_{j\in J}\big(C^\ell([0,1],E_j),\cO_{C^k_+}\big)
\]
be the map taking a function to its family of components.
Then $\Phi$ is an isomorphism of topological vector spaces.
\end{la}
\begin{proof}
We know that $\Phi$ is an isomorphism of vector spaces
(see \ref{numbacompmap}).
The domain of $\Phi$ is a vector subspace
of $\left( C^{k+1}\big([0,1],\bigoplus_{j\in J}E_j\big),\cO_{C^k_+}\right)$,
endowed with the induced topology.
Moreover, the range of $\Phi$ is a vector subspace of
$\bigoplus_{j\in J}\big(C^{k+1}([0,1],E_j),\cO_{C^k_+}\big)$, endowed with the induced topology.
To see that $\Phi$ is a homeomorphism,
it therefore suffices to consider the case that $\ell=k+1$.
Let $\wt{E}_j$ be a completion of $E_j$ such that $E_j\sub\wt{E}_j$.
The domain of $\Phi$ is a topological vector subspace
of $\left( C^{k+1}\big([0,1],\bigoplus_{j\in J}\wt{E}_j\big),\cO_{C^k_+}\right)$
and
the range of $\Phi$ is a topological vector subspace of
$\bigoplus_{j\in J}\big(C^{k+1}([0,1],\wt{E}_j),\cO_{C^k_+}\big)$.
We may therefore assume that each $E_j$ (and hence also
$\bigoplus_{j\in J}E_j$) is complete.
Lemma~\ref{topviaL} provides isomorphisms
\[
(C^{k+1}([0,1],E_j),\cO_{C^k_+})\to E_j^{k+1}\times (C([0,1],E_j),\cO_{L^1})
\]
of topological vector spaces.
Since locally convex direct sums are compatible with finite direct products,
the preceding isomorphisms combine to an isomorphism
\[
\Gamma\colon
\bigoplus_{j\in J}\big(C^\ell([0,1],E_j),\cO_{C^k_+}\big)\to
E^{k+1}\times \bigoplus_{j\in J}\big(C([0,1],E_j),\cO_{L^1}\big)
\]
of topological vector spaces.
Lemma~\ref{topviaL} also provides an isomorphism
\[
\Theta\colon C^{k+1}([0,1],E)\to E^{k+1}\times (C([0,1],E),\cO_{L^1})
\]
of topological vector spaces.
Finally, Lemma~\ref{keylemsum}
provides an isomorphism
\[
\Psi\colon  (C([0,1],{\textstyle\bigoplus_{j\in J}}E_j),\cO_{L^1})\to\bigoplus_{j\in J}\big(C([0,1],E_j),\cO_{L^1}\big)
\]
of topological vector spaces.
Then also $\id\times\Psi\colon E^{k+1}\times (C([0,1],E),\cO_{L^1})\to E^{k+1}\times \bigoplus_{j\in J}\big(C([0,1],E_j),\cO_{L^1}\big)$ is an isomorphism of topological vector spaces, where $\id\colon E^{k+1}\to E^{k+1}$
is the idenitity map. It is clear from the construction of the preceding mappings that
\[
\Gamma\circ\Phi=(\id\times \Psi)\circ \Theta.
\]
Since $(\id\times \Psi)\circ \Theta$ is an isomorphism of topological vector spaces,
also $\Phi=\Gamma^{-1}\circ (\id\times \Psi)\circ\Theta$
is an isomorphism of topological vector spaces.
\end{proof}
{\bf Proof of Theorem~\ref{great}.}
By Proposition~\ref{weakreg}\,(a),
$G$ is $C^\ell$-semiregular.
By Lemma~\ref{keyla3x}, the map
\[
\Phi\colon (C^\ell([0,1],\cg),\cO_{C^k})\to\bigoplus_{j\in J}(C^\ell([0,1],\cg_j),\cO_{C^k})
\]
taking a $\cg$-valued $C^\ell$-curve to its family of components is an isomorphism
of topological vector spaces.
It remains to observe that $\evol_G=\big(\oplus_{j\in J}\evol_{G_j}
\big)\circ\Phi$, where $\oplus_{j\in J}\evol_{G_j}$ is smooth
(like each $\evol_{G_j}$), by \cite{MEA}.\,\Punkt
\section{Weak direct products of locally {\boldmath$\mu$}-convex groups}\label{secergae}
In this section, we establish stronger regularity properties
for weak direct products $\bigoplus_{j\in J}G_j$ over arbitrary (not necessarily countable) index sets~$J$
if all $G_j$ are sufficiently nice (e.g., Banach-Lie groups).
\begin{defn}
If $G$ is a $C^k$-semiregular Lie group for some $k\in \N_0\cup\{\infty\}$
and $q$ is a continuous seminorm on $\cg:=L(G)$,
we define
\[
\|\gamma\|_{L^1,q}:=\int_0^1q(\gamma(t))\, dt
\]
for $\gamma\in C^k([0,1],\cg)$.
Then $\|.\|_{L^1,q}$
is a seminorm on $C^k([0,1],\cg)$
which is continuous with respect to the $C^0$-topology
(as $\|\gamma\|_{L^1,q}\leq \sup\{q(\gamma(t))\colon t\in [0,1]\}$).
We let $\cO_{L^1}$ be the locally convex vector topology
on $C^k([0,1],\cg)$ defined by the seminorms $\|.\|_{L^1,q}$,
for $q$ ranging through the set of all continuous seminorms on $\cg$.
\end{defn}
In the next lemma, $\gamma*\eta=\Ad(\Evol(\eta))^{-1}.\gamma+\eta$
(as before).
\begin{la}\label{L1ctszero}
If $k\in \N_0\cup\{\infty\}$ and $G$ is a $C^k$-semiregular Lie group with Lie algebra $\cg:=L(G)$,
then right translation
\[
\rho_\eta\colon (C^k([0,1],\cg),\cO_{L^1})\to (C^k([0,1],\cg),\cO_{L^1}),\quad
\gamma\mto\gamma*\eta
\]
is a continuous affine-linear map and hence smooth,
for each $\eta\in C^k([0,1],\cg)$.
As a consequence, $\rho_\eta$ is a homeomorphism and thus
\[
\evol\colon (C^k([0,1],\cg),\cO_{L^1})\to G
\]
is continuous if and only if it is continuous at~$0$.
\end{la}
\begin{proof}
It suffices to show that, for each $\eta\in C^k([0,1],\cg)$,
the linear self-map $\alpha\colon \gamma\mto\Ad(\Evol(\eta))^{-1}\gamma$
is continuous. Then $\rho_\eta$ is continuous, hence a homeomorphism
(as $\rho_{\eta^{-1}}$ is its inverse) and the final conclusion follows
as in the proof of Theorem~D.

Let $q$ be a continuous seminorm on~$\cg$.
The map $h\colon [0,1]\times \cg\to\cg$, $(t,v)\mto \Ad(\Evol(\eta)(t)^{-1})(v)$
is continuous and vanishes on $[0,1]\times\{0\}$.
Then $h([0,1]\times Q)\sub B^q_1(0)$ for some
zero-neighbourhood $Q\sub \cg$ (by the Wallace Lemma).
We have $B^p_1(0)\sub Q$ for some continuous
seminorm $p$ on $\cg$.
If $v\in \cg$ and $r>0$ such that $rp(v)<1$, then
$rq(h(t,v))=q(h(t,rv))<1$ and thus $q(h(t,v))\leq \frac{1}{r}$.
Letting $\frac{1}{r}\to p(v)$, we deduce that $q(h(t,v))\leq p(v)$.
As a consequence, $\|\alpha(\gamma)\|_{L^1,q}\leq \|\gamma\|_{L^1,p}$
for all $\gamma$ and hence $\alpha$ is continuous.
\end{proof}
\begin{defn}\label{defmuc}
If $G$ is a Lie group with modelling space~$E$, $\phi\colon U\to V\sub E$
a chart for $G$ around~$e$ with $\phi(e)=0$ and $q$ a seminorm on $E$, we write
\[
q_\phi(g):=q(\phi(g))
\]
for all $g\in U$.
The Lie group $G$ is called \emph{locally $\mu$-convex}
if there exists a chart $\phi\colon U\to V$ of $G$ with $\phi(e)=0$
such that, for each continuous seminorm $q$ on $E$,
there exists a continuous seminorm $p$ on $E$
and $R>0$ such that
\begin{eqnarray*}
\lefteqn{(\forall r\in \,]0,R])\,(\forall n\in\N)\, (\forall g_1,\ldots, g_n\in U)}\qquad\quad\\
& & \sum_{j=1}^np_\phi(g_j)<r\impl g_1g_2\cdots g_n\in U\;\mbox{and}\;
q_\phi(g_1g_2\cdots g_n)< r.
\end{eqnarray*}
\end{defn}
\begin{rem}
If $G$ is locally $\mu$-convex,
then the statement at the end of Definition~\ref{defmuc}
is valid for every chart $\phi$ with$\phi(e)=0$
(with different choices of $R$ and $p$).
To see this, note that the change of charts is a smooth map
and hence Lipschitz in a suitable sense (which we presently recall).
\end{rem}
We recall from \cite{HOL}:
%
%
\begin{numba}
If $E$ and $F$ are locally convex spaces, $U\sub E$is open and $f\colon U\to F$ a $C^1$-map,
then $f$ is Lipschitz in the sense that, for each $x\in U$ and continuous seminorm
$p$ on $F$, there exists a neighbourhood $W\sub U$ of $x$ and continuous seminorm $q$ on~$E$ such that
\begin{equation}\label{deflips}
(\forall y,z\in U)\;\; p(f(z)-f(y))\leq q(z-y).
\end{equation}
\end{numba}
\begin{prop}\label{Banachismu}
Every Banach-Lie group is locally $\mu$-convex.
\end{prop}
%
\begin{proof}
If $G$ is a Lie group modelled on a Banach-space $(E,\|.\| )$
and $\phi\colon U\to V$ a chart around $1$ such that $\phi(1)=0$,
we can choose an open identity neighbourhood $P\sub U$ such that $PP\sub U$.
Then $Q:=\phi(P)$ is an open zero-neighbourhood in~$V$ and
\[
m \colon Q\times Q\to V,\quad m(x,y):=\phi(\phi^{-1}(x)\phi^{-1}(y))
\]
expresses the group multiplication in the local chart. The first
order Taylor expansion around $(0,0)$ reads
\begin{equation}\label{tay1}
m(x,y)=x+y+\rho(x,y)
\end{equation}
and, after shrinking $Q$ if necessary, there is $C\in \,]0,\infty[$ such that
\begin{equation}\label{tay2}
\|\rho(x,y)\|\leq C\,\|x\|\,\|y\|\quad\mbox{for all $x,y\in Q$}
\end{equation}
(see, e.g., \cite{PAD}). 
After replacing $\|.\|$ with $C\|.\|$, we may assume that $C=1$.
Now $B_\ve(0)\sub Q$ for some $\ve\in \,]0,1]$.
If $q$ is a continuous seminorm on~$E$, then $q\leq a\|.\|$ for some $a\in\,]0,\infty[$.
Let $p:=\max\{1,2a\}\|.\|$,
$R:=\ve\max\{1,2a\}/2$ and $r\in\,]0,R]$.
If $n\in\N$ and $g_1,\ldots, g_n\in U$
such that $\sum_{j=1}^n p_\phi(g_j)<r$,
set $x_j:=\phi(g_j)$ for $j\in\{1,\ldots, n\}$.
Thus $\sum_{j=1}^n p(x_j)<r$ and hence $\sum_{j=1}^n\|x_j\|<\varepsilon/2$.
Then $z_1:=x_1$, $z_{j+1}:=m(z_j,x_{j+1})$
can be formed for all $j\in\{1,\ldots, n-1\}$ and
\[
\|z_j\|\leq 2\sum_{i=1}^j\|x_i\|,
\]
as we verify by induction: If $j=1$,
then $\|z_j\|=\|x_1\|\leq 2\|x_1\|$ indeed.
If the assertion holds for some $j\in\{1,\ldots,n-1\}$,
then $\|z_j\|\leq 2\sum_{i=1}^j\|x_i\|<\ve$ shows that $z_j\in Q$.
Since $\|x_{j+1}\|<\varepsilon/2$, we also have $x_{j+1}\in Q$
and thus $z_{j+1}:=m(z_j,x_{j+1})$ makes sense.
Moreover, by (\ref{tay1}) and (\ref{tay2}),
\begin{eqnarray*}
\|z_{j+1}\|&=&\|m(z_j,x_{j+1})\|=\|z_j+x_{j+1}+\rho(z_j,x_{j+1})\\
&\leq& \|z_j\|+\|x_j\|+\|\rho(z_j,x_{j+1})\|
\leq 2\sum_{i=1}^j\|x_i\|+\|x_{j+1}\|+\|z_j\|\,\|x_{j+1}\|.
\end{eqnarray*}
Since $\|z_j\|<\ve\leq 1$, we deduce that $\|z_{j+1}\|\leq 2\sum_{i=1}^{j+1}\|x_i\|$,
which completes the inductive proof.
It only remains to note that $q_\phi(g_1\cdots g_n)=q(z_n)\leq a\|z_n\|
\leq 2a\sum_{j=1}^n\|x_j\|\leq\sum_{j=1}^np(x_j)<r$.
\end{proof}
\begin{prop}\label{plismuconv}
Let $G=\bigcap_{i\in\N}G_i$ be as in Corollary~{\rm\ref{easierplim}}
and assume that, moreover, each $G_i$ is a Banach-Lie group.
Then $G$ is locally $\mu$-convex.
\end{prop}
\begin{proof}
Let $E$, $E_i$, $\psi\colon U\to V$ and $\psi_i\colon U_i\to V_i$ be as in Corollary~\ref{easierplim},
and $\|.\|_i$ be a norm on $E_i$ defining its topology.
If $q$ is a continuous seminorm on $E$,
then there is $i\in\N$ and $c>0$ such that $q(x)\leq c\|x\|_i$
for all $i\in E$. Set $Q:=c\|.\|_i$. Since $G_i$ is locally $\mu$-convex,
there exists $R>0$ and a continuous seminorm $p$ on $E_i$
such that, for all $r\in\,]0,R]$, $n\in\N$ and $g_1,\ldots, g_n\in U_i$
with $\sum_{j=1}^n p_{\psi_i}(g_j)<r$,
we have $g_1\cdots g_n\in U_i$ and $Q_{\psi_i}(g_1\cdots g_n)<r$.
As a consequence,
for all $r\in\,]0,R]$, $n\in\N$ and $g_1,\ldots, g_n\in U$
with $\sum_{j=1}^n p_{\psi}(g_j)<r$,
we have $g_1\cdots g_n\in U_i\cap G=U$ and $q_\psi(g_1\cdots g_n)\leq Q_{\psi_i}(g_1\cdots g_n)<r$.
\end{proof}
\begin{example}\label{preprex}
If $M$ is a compact smooth manifold with or without boundary
and $H$ is a Banach-Lie group,
then $C^k(M,H)$ is locally $\mu$-convex for all $k\in \N_0\cup\{\infty\}$.\\[2.3mm]
[If $k$ is finite, then $C^k(M,H)$ is a Banach-Lie group and hence locally $\mu$-convex by
Proposition~\ref{Banachismu}.
To $C^\infty(M,H)=\bigcap_{i\in \N_0}C^i(M,H)$,
Proposition~\ref{plismuconv} applies.\,]
\end{example}
\begin{la}\label{getevoct}
Let $k\in \N_0\cup\{\infty\}$ and $G$ be a $C^k$-regular Lie group
which is locally $\mu$-convex.
Then $\evol\colon (C^k([0,1],\cg),\cO_{L^1})\to G$ is continuous.
\end{la}
\begin{proof}
Let $Q$ be an identity neighbourhood in~$G$.
Let $\phi\colon U\to V\sub E$, $p$, $q$ and $R$ be as in the definition
of local $\mu$-convexity; we may assume that $E=\cg$ and $d\phi|_\cg=\id_{\cg}$.
Assume that $q$ has been chosen such that $B^q_s(0)\sub V$
and $\phi^{-1}(B^q_s(0))\sub Q$ for some $s>0$.
After shrinking $R$, we may assume that $R\leq s$.
Because $\evol\colon C^k([0,1],\cg)\to G$ is smooth
with respect to the $C^k$-topology on its domain,
there is a $0$-neighbourhood
$W\sub C^k([0,1],\cg)$ such that $\evol(W)\sub U$.
We may assume that $W=\{\gamma\in C^k([0,1],\cg)\colon \|\gamma\|_{C^\ell,P}<1\}$
for a continuous seminorm~$P$ on~$\cg$
and $\ell\in \N_0$ with $\ell\leq k$.
Since $\phi\circ\evol$ is smooth
and
\[
(\phi\circ\evol)'(0)(\gamma)=\int_0^1\gamma(t)\,dt \quad\mbox{for all
$\gamma\in C^k([0,1],\cg)$,}
\]
we have
\begin{equation}\label{gammone}
(\phi\circ\evol)(\gamma)=\int_0^1\gamma(t)\, dt\,+\, \rho(\gamma)\quad
\mbox{for all $\gamma\in C^k([0,1],\cg)$,}
\end{equation}
with the first order Taylor remainder $\rho(\gamma)\in \cg$.
After increasing $\ell$ and $P$ if necessary, we may assume that
\[
p(\rho(\gamma))\leq (\|\gamma\|_{C^\ell,P})^2
\]
for all $\gamma\in W$.
%
%
 We may also assume that $P\geq p$ pointwise. Now
\[
W_1:=\left\{\gamma\in C^k([0,1],\cg)\colon \|\gamma\|_{L^1,P}<\frac{R}{2}\right\}
\]
is a zero-neighbourhood in $(C^k([0,1],\cg),\cO_{L^1})$.
Given $\gamma\in W_1$ and $n\in \N$, define
$\gamma_{n,j}\in C^k([0,1],\cg)$ for $j\in\{1,\ldots, n\}$ via
\[
\gamma_{n,j}(t):=\frac{1}{n}\, \gamma\left(\frac{j-1+t}{n}\right).
\]
Then
\begin{equation}\label{recogamm}
\evol(\gamma)=\evol(\gamma_{n,1})\evol(\gamma_{n,2})\cdots\evol(\gamma_{n,n}).
\end{equation}
Choose $N\in \N$ so large that $\frac{1}{N}\|\gamma\|_{C^\ell,P}<1$
and  $\frac{1}{N}(\|\gamma\|_{C^\ell,P})^2<\frac{R}{2}$.
Then $\|\gamma_{n,j}\|_{C^\ell,P}\leq\frac{1}{n}\|\gamma\|_{C^\ell,P}<1$
for all $n\geq N$ and $j\in\{1,\ldots, n\}$, whence $\gamma_{n,j}\in W$;
moreover,
\begin{equation}\label{gammtwo}
p(\rho(\gamma_{n,j}))\leq \frac{1}{n}\frac{(\|\gamma\|_{C^\ell,P})^2}{N}<\frac{R}{2n}.
\end{equation}
Combining (\ref{gammone}) and (\ref{gammtwo}),
we obtain
\begin{eqnarray*}
p_\phi(\evol(\gamma_{n,j}))& =& p(\phi(\evol(\gamma_{n,j})))
\leq \int_0^1p(\gamma_{n,j}(t))\,dt\,+\, p(\rho(\gamma))\\
&<& \int_{(j-1)/n}^{j/n}p(\gamma(u))\,du\,+\,\frac{R}{2n},
\end{eqnarray*}
entailing that
\begin{eqnarray*}
\sum_{j=1}^n p_\phi(\evol(\gamma_{n,j}))&<&\int_0^1 p(\gamma(t)\,dt+\frac{1}{2R}\\
&=& \|\gamma\|_{L^1,p}+\frac{R}{2}\leq \|\gamma\|_{L^1,P}+\frac{R}{2}<R.
\end{eqnarray*}
Using (\ref{recogamm}) and the definition of local $\mu$-convexity,
we deduce that
\[
\evol(\gamma)=\evol(\gamma_{n,1})\evol(\gamma_{n,2})\cdots\evol(\gamma_{n,n})
\in U
\]
and $q_\phi(\evol(\gamma))<R$.
Hence $\evol(\gamma)\in \phi^{-1}(B^q_R(0))\sub
\phi^{-1}(B^q_s(0))\sub Q$.
Thus $\evol(W_1)\sub Q$,
showing that $\evol\colon (C^k([0,1],\cg),\cO_{L^1})\to G$
is continuous at~$0$ and hence continuous,
by Lemma~\ref{L1ctszero}.
\end{proof}
\begin{prop}\label{propweakpro2}
Let $k\in \N_0\cup\{\infty\}$ and
$(G_j)_{j\in J}$ be a family
of Lie groups $G_j$, with Lie algebra $\cg_j:=L(G_j)$.
Let $G:=\bigoplus_{j\in J}G_j$ be the weak direct product
and $\cg:=L(G)$.
If $G_j$ is $C^k$-regular for each $j\in J$ and $\evol_{G_j}\colon (C^k([0,1],\cg_j),\cO_{L^1})\to G_j$
is continuous, then $G$ is $C^k$-regular and $\evol\colon (C^k([0,1],\cg),\cO_{L^1})\to G$
is continuous.
\end{prop}
\begin{proof}
As in the proof of Lemma~\ref{keylemsum}, we see that the map
\[
\Phi\colon (C^k([0,1],\cg),\cO_{L^1})\to\bigoplus_{j\in J}(C^k([0,1],\cg_j),\cO_{L^1})
\]
taking a $\cg$-valued $C^k$-curve to its family of components is an isomorphism
of topological vector spaces.\footnote{Replace $E_j$ with $\cg_j$ and continuous
functions with $C^k$-functions.}
We already know from Proposition~\ref{weakreg}
that $G$ is $C^k$-semiregular.
It remains to observe that $\evol_G=\big(\oplus_{j\in J}\evol_{G_j}
\big)\circ\Phi$, where $\oplus_{j\in J}\evol_{G_j}$ is smooth
(like each $\evol_{G_j}$), by \cite{MEA}.
\end{proof}
\begin{cor}
The test function group $C^\ell_c(M,H)$ is $C^0$-regular
for each $\ell\in\N_0\cup\{\infty\}$, Banach-Lie group $H$ and paracompact finite-dimensional
smooth manifold~$M$.
\end{cor}
\begin{proof}
Let $(M_j)_{j\in J}$ be a locally finite family of
compact submanifolds with boundary of $M$ such that
the interiors $M_j^0$ cover~$M$.
By Example~\ref{preprex}, Lemma~\ref{getevoct}
and Proposition~\ref{propweakpro2},
the weak direct product $\bigoplus_{j\in J}C^\ell(M_j,H)$
is $C^0$-regular. Hence also $C^\ell_c(M,H)$ is
$C^0$-regular, by
\ref{preparetestfu}.
\end{proof}
\section{Regularity of Silva-Lie groups}
We describe a regularity criterion for
Lie groups modelled on Silva spaces
(i.e., (DFS)-spaces).
It implies that the Lie
group $\Diff^\omega(M)$ of real analytic diffeomorphisms
of a compact real analytic manifold $M$
(as studied in \cite{DaS}, cf.\ \cite{Les})
is $C^1$-regular.\footnote{For the proof
of regularity of $\Diff^\omega(M)$
in the sense of convenient differential
calculus, see already \cite{KaM}.}\\[2.3mm]
We shall use a version of Ascoli's Theorem
for $C^k$-maps.
\begin{la}\label{asco}
Let $E$ be an integral complete locally convex space, $k\in \N_0$
and $\Gamma \sub C^k([0,1],E)$ be a subset with the following
properties:
\begin{itemize}
\item[\rm(a)]
$\{\gamma^{(j)}(0)\colon \gamma\in \Gamma\}$ is relatively
compact in $E$ for each $j\in\{0,1,\ldots, k-1\}$;
\item[\rm(b)]
$\{\gamma^{(k)}(t)\colon \gamma\in \Gamma\}$
is relatively compact in $E$
for each $t\in[0,1]$; and
\item[\rm(c)]
$\{\gamma^{(k)}\colon \gamma\in \Gamma\}$ is an equicontinuous
subset of $C([0,1],E)$.
\end{itemize}
Then $\Gamma$ is relatively compact
in $C^k([0,1],E)$.
\end{la}
\begin{proof}
The map
\[
\Phi\colon C^k([0,1],E)\to E^k\times C([0,1],E)
\]
with $\Phi(\gamma):=(\gamma(0),\ldots,\gamma^{(k-1)}(0),\gamma^{(k)})$
is an isomorphism of topological vector spaces.
In view of (b) and (c),
Ascoli's Theorem shows that
the set $\{\gamma^{(k)}\colon \gamma\in \Gamma\}$
has compact closure $K$ in $C([0,1],E)$.
By (a), also the closure $L_j$ of $\{\gamma^{(j)}(0)\colon \gamma\in \Gamma\}$
in~$E$ is compact, for all $j\in \{0,1,\ldots,k-1\}$.
Since
\[
\Phi(\Gamma)\sub L_0\times\cdots\times L_{k-1}\times K,
\]
the set $\Phi(\Gamma)$ is relatively compact and hence
so is $\Gamma$.
\end{proof}
The following lemma can be useful in proofs of regularity
for Lie groups modelled on locally convex direct limits.
\begin{la}\label{evglobalze}
Let $G$ be a Lie group, with Lie algebra $\cg:=L(G)$,
and $F\sub \cg$ be a vector subspace, endowed with
a locally convex vector topology $($which need not be induced by $\cg)$.
Let $k,m\in\N_0\cup\{\infty\}$.
If $\Evol(\gamma)$ exists for $\gamma$ in an open $0$-neighbourhood
$W\sub C^k([0,1],F)$ and $\evol \colon W\to G$
is $C^m$, then $\Evol(\gamma)$ exists for all
$\gamma\in C^k([0,1],F)$ and $\evol\colon C^k([0,1],F)\to G$
is $C^m$.
\end{la}
\begin{proof}
After shrinking $W$,
we may assume that
\[
W=\{\gamma\in C^k([0,1],F)\colon \|\gamma\|_{p,C^\ell}<1\}
\]
for some $\ell\in \N_0$ such that $\ell\leq k$ and some continuous
seminrom $p$ on $F$.
Now the usual inductive argument
(as in Lemma~\ref{??} or \cite{Dah}) shows that $\Evol$ is defined on $2^nW$ for all
$n\in \N_0$ and that $\evol \colon 2^nW\to G$ is $C^m$.
The assertion follows since $C^k([0,1],F)=\bigcup_{n\in \N_0}2^nW$.
\end{proof}
\begin{la}\label{C1curveSilva}
Let $E_1\sub E_2\sub\cdots$ be Banach spaces
such that the inclusion map $E_n\to E_{n+1}$
is a compact operator for each $n\in \N$.
Endow $E:=\bigcup_{n\in \N}E_n=\dl\,E_n$,\vspace{-.3mm}
with the locally convex direct limit topology.
Let~$Y$ be a topological space, $k\in \N_0$
and $P_1\sub P_2\sub\cdots$ be an ascending sequence
of open convex $0$-neighbourhoods $P_n\sub C^k([0,1],E_n)$,
entailing that $P:=\bigcup_{n\in \N}P_n$
is an open $0$-neighbourhood in $C^k([0,1],E)$.
Let
\[
f\colon P\to Y
\]
be a map such that $f|_{P_n}\colon P_n\to Y$
is continuous with respect to the topology induced by
the Banach space $C^k([0,1],E_n)$ on~$P_n$, for each $n\in \N$.
Then $f|_{P\cap C^{k+1}([0,1],E)}\colon P\cap C^{k+1}([0,1],E)\to Y$
is continuous with respect
to the $C^{k+1}$-topology on~$P\cap C^{k+1}([0,1],E)$.
\end{la}
\begin{proof}
Being a Silva space, the locally convex direct
limit $E=\dl\,E_n$\vspace{-.3mm} is complete and
compactly regular,
i.e., every compact subset of $E$ is
contained in $E_n$ for some $n\in \N$
and compact also in $E_n$ (see \cite{Flo}).
Hence, by Mujica's Theorem for $C^k$-curves~\cite[Lemma~1.2.7\,(b)]{Dah}, we have
$C^k([0,1],E)=\dl\,C^k([0,1],E_n)$ and $C^{k+1}([0,1],E)=\dl\,C^{k+1}([0,1],E_n)$
as locally convex spaces. In particular, this implies
that $P$
is indeed open in $C^k([0,1],E)$.\\[2.3mm]
It suffices to show that $f|_{P\cap C^{k+1}([0,1],E)}$ is continuous at~$0$.
In fact, given $x\in P$ we may assume $x\in P_1$ after passung to a cofinal
subsequence. Then $P_n-x$ is a convex open $0$-neighbourhood in $C^k([0,1],E)$
for each $n\in \N$ and if we know that
$P-x\to Y$, $y\mapsto f(x+y)$ is continuous at~$0$, then $f$ is continuous
at~$x$.\\[2.3mm]
Let $U\sub Y$ be an open neighbourhood of $f(0)$.
Since $f|_{P_2}$ is continuous, $P_2$ contains a closed $0$-neighbourhood
$A_1\sub C^k([0,1],E_2)$ with $f(A_1)\sub U$.
As the incusion map $C^k([0,1],E_1)\to C^k([0,1],E_2)$
is continuous linear, we find $r_1>0$
such that
\[
W_1:=\{\gamma\in C^k([0,1],E_1)\colon \|\gamma\|_{C^k([0,1],E_n)}<r_1\}\sub A_1.
\]
The set
\[
V_1:= \{\gamma\in C^{k+1}([0,1],E_1)\colon \|\gamma\|_{C^{k+1}([0,1],E_n)}<r_1\}
\]
satisfies the hypotheses of Lemma~\ref{asco}
(recalling that the closure of $B^{E_1}_{r_1}(0)$
in the Banach space $E_2$ is compact).
Therefore the closure $\wb{V_1}$ of $V_1$ in $C^k([0,1],E_2)$
is compact. Moreover, $\wb{V_1}\sub A_1$ and thus $f(\wb{V_1})\sub U$.
Given $n\in \N$, assume that, for $j\in \{1,\ldots, n\}$,
open convex $0$-neighbourhoods
$V_j$ in $C^{k+1}([0,1],E_n)$ have been found
such that the closure $\wb{V_j}$ of $V_j$
in $C^k([0,1],E_{j+1})$ is compact, $\wb{V_j}\sub P_{j+1}$,
$f(\wb{V_j})\sub U$
and $V_1\sub V_2\sub\cdots\sub V_n$.
Since $f|_{P_{n+2}}$ is continuous and $f(\wb{V_n})\sub U$,
the preimage $(f|_{P_{n+2}})^{-1}(U)$
is an open subset
of $C^k([0,1],E_{n+2})$ which contains the compact set~$\wb{V_n}$.
Using \cite[Theorem 4.10]{HaR}, we find a closed $0$-neighbourhood
$A_{n+1}$ in $C^k([0,1],E_{n+2})$ such that
$\wb{V_n}+A_{n+1}
\sub (f|_{P_{n+2}})^{-1}(U)$ and thus
\[
f(\wb{V_n}+A_{n+1})\sub U.
\]
Note that $\wb{V_n}+A_{n+1}$ is closed in $C^k([0,1],E_{n+2})$.
Let $A_{n+1}^0$ be the interior of $A_{n+1}$ in $C^k([0,1],E_{n+2})$.
The map
\[
h\colon \wb{V_n}\times C^k([0,1], E_{n+1})\to C^k([0,1],E_{n+2}),\;\;
h(\gamma,\eta):=\gamma+\eta
\]
is continuous, entailing that there exists $r_{n+1}>0$
such that $\gamma+\eta\in \wb{V_n}+A_{n+1}^0$
for all $\gamma\in \wb{V_n}$ and $\eta$ in the set
\[
W_{n+1}:=\{\eta\in C^k([0,1],E_{n+1})\colon \|\eta\|_{C^k([0,1],E_{n+1})}<r_{n+1}\}.
\]
As above, we see that
\[
S_{n+1}:=\{\eta\in C^{k+1}([0,1],E_{n+1})\colon \|\eta\|_{C^{k+1}([0,1],E_{n+1})}<r_{n+1}\}
\]
has compact closure $\wb{S_{n+1}}$ in $C^k([0,1],E_{n+2})$.
Then
\[
V_{n+1}:=\wb{V_n}+S_{n+1}
\]
is a convex open $0$-neighbourhood
in $C^{k+1}([0,1],E_{n+1})$ with $V_n\sub V_{n+1}$ and we have
\[
\wb{V_{n+1}}=\wb{V_n}+\wb{S_{n+1}}\sub \wb{V_n}+\wb{W_{n+1}}
\sub \wb{\wb{V_n}+W_{n+1}}\sub \wb{V_n}+A_{n+1}
\]
for the closures in $C^k([0,1],E_{n+2})$.
Hence $\wb{V_{n+1}}\sub P_{n+2}$ in particular
and $f(\wb{V_{n+1}})\sub U$.
Then $V:=\bigcup_{n\in \N}V_n$
is an open $0$-neighbourhood in the locally convex direct limit
$C^{k+1}([0,1],E)=\dl\,C^{k+1}([0,1],E_n)$
such that $f(V)\sub U$ and thus $f|_{P\cap C^{k+1}([0,1],E)}$
is continuous at~$0$.
\end{proof}
\begin{la}\label{C1curveSilva2}
Let $E_1\sub E_2\sub\cdots$ be Banach spaces
such that the inclusion map $E_n\to E_{n+1}$
is a compact operator for each $n\in \N$.
Endow $E:=\bigcup_{n\in \N}E_n=\dl\,E_n$,\vspace{-.3mm}
with the locally convex direct limit topology.
Let $k\in \N_0$, $m\in \N_0\cup\{\infty\}$,
$Y$ be a $C^m$-manifold
and $P_1\sub P_2\sub\cdots$ be an ascending sequence
of open convex $0$-neighbourhoods $P_n\sub C^k([0,1],E_n)$,
entailing that $P:=\bigcup_{n\in \N}P_n$
is an open $0$-neighbourhood in $C^k([0,1],E)$.
Let
\[
f\colon P\to Y
\]
be a map such that $f|_{P_n}\colon P_n\to Y$
is $C^m$ with respect to the topology induced by
the Banach space $C^k([0,1],E_n)$ on~$P_n$, for each $n\in \N$.
Then $f|_{P\cap C^{k+1}([0,1],E)}\colon P\cap C^{k+1}([0,1],E)\to Y$
is $C^m$ with respect
to the $C^{k+1}$-topology on~$P\cap C^{k+1}([0,1],E)$.
\end{la}
\begin{proof}
We may assume that $m\in \N_0$
and proceed by induction on~$m$.
The case $m=0$ is covered by Lemma~\ref{C1curveSilva}.
If $m\geq 1$, then $h:=f|_{P\cap C^{k+1}([0,1],E)}$
is continuous in particular.
Moreover, the map
\[
\omega\colon (P\cap C^{k+1}([0,1],E))\times C^{k+1}([0,1],E)\to TY
\]
which coincides with
$Tf|_{P_n}$ on $(P_n\cap C^{k+1}([0,1],E))\times C^{k+1}([0,1],E)$
is $C^{m-1}$ by the inductive hypothesis.
If $\theta\colon [{-\ve},\ve]\,\to P\cap C^{k+1}([0,1],E)$
is a $C^1$-curve, then
there exists $n\in \N$ such that $\gamma$ is a $C^1$-curve
to $C^{k+1}([0,1],E_n)$ (as follows from compact regularity of
$E=\dl\,E_n$).
%
%
After increasing $n$ if necessary, we may assume that
$\gamma([{-\ve},\ve])\sub P_n$.
As a consequence,
$h\circ \gamma$ is a $C^1$-curve and
$(h\circ\gamma)'(t)=\omega(\gamma(t),\gamma'(t))$.
Hence, by Lemma~\ref{getC1tool}, $h=f|_{P\cap C^{k+1}([0,1],E)}$
is $C^1$ with $Th=\omega$ a $C^{m-1}$-map and thus $h=f|_{P\cap C^{k+1}([0,1],E)}$
is~$C^m$.
\end{proof}
\begin{prop}\label{C1regSilva}
Let $G$ be a
Lie group whose Lie algebra $\cg:=L(G)$
is a Silva space.
Assume that $\cg=\dl\,E_n$\vspace{-.3mm}
for an ascending sequence $E_1\sub E_2\sub\cdots$
of Banach spaces such that the inclusion
map $E_n\to E_{n+1}$ is a compact operator
for each $n\in \N$.
Also assume the following:
\begin{itemize}
\item[\rm(a)]
For each $n\in \N$,
there is an open $0$-neighbourhood
$P_n\sub C([0,1],E_n)$
such that an evolution $\Evol(\gamma)$
exists for each $\gamma\in P_n$
and $\evol\colon P_n\to G$ is continuous
with respect to the topology induced by the Banach space $C([0,1],E_n)$
on $P_n$.
\item[\rm(b)]
There is a point-separating family $(\alpha_j)_{j\in J}$
of smooth group homomorphisms $\alpha_j\colon G\to H_j$
to $C^0$-regular Lie groups~$H_j$.
\end{itemize}
Then $G$ is $C^1$-regular and $C^0$-semiregular.
The same conclusion holds if $\evol$ is replaced
with the right evolution $\evol^r$
in {\rm(a)}.
\end{prop}
\begin{proof}
By Lemma~\ref{evglobalze},
we may assume that $P_n=C([0,1],E_n)$ for each $n\in \N$.
Thus each $\gamma\in \bigcup_{n\in \N}C([0,1],E_n)=C([0,1],\cg)$
has an evolution $\Evol(\gamma)$ and hence $G$ is $C^0$-semiregular.
Lemma~\ref{C1curveSilva} applies to $f:=\evol\colon C([0,1],\cg)\to G$,
whence
\[
\evol|_{C^1([0,1],\cg)}\colon C^1([0,1],\cg)\to G
\]
is continuous. Use the same symbol, $\evol$,
for the latter restriction.
Now the variant of Theorem~F indicated
in Remark~\ref{variantThmF} shows that $\evol$ is smooth.
If $\Evol^r(\gamma)$ exists for $\gamma\in P_n$
and $\evol^r\colon P_n\to G$ is continuous,
then ... shows that $\Evol(\gamma)=\Evol(-\gamma)^{-1}$
exists for all $\gamma\in -P_n$
and $\evol|_{P_n}$ is continuous.
Now apply the above case.
\end{proof}
%
%
%
\begin{prop}\label{C1regSilva2}
Let $G$ be a
Lie group whose Lie algebra $\cg:=L(G)$
is a Silva space.
Assume that $\cg=\dl\,E_n$\vspace{-.3mm}
for an ascending sequence $E_1\sub E_2\sub\cdots$
of Banach spaces such that the inclusion
map $E_n\to E_{n+1}$ is a compact operator
for each $n\in \N$.
Also assume that there is $k\in \N_0$
such that, for each $n\in \N$,
there is an open $0$-neighbourhood
$P_n\sub C^k([0,1],E_n)$
such that an evolution $\Evol(\gamma)$
exists for each $\gamma\in P_n$
and $\evol\colon P_n\to G$ is $C^1$
with respect to the topology induced by the Banach space $C^k([0,1],E_n)$
on $P_n$.
Then $G$ is $C^{k+1}$-regular and $C^k$-semiregular.
The same conclusion holds if $\evol$ is replaced
with the right evolution $\evol^r$.
\end{prop}
\begin{proof}
By Lemma~\ref{evglobalze},
we may assume that $P_n=C^k([0,1],E_n)$ for each $n\in \N$.
Thus each $\gamma\in \bigcup_{n\in \N}C^k([0,1],E_n)=C^k([0,1],\cg)$
has an evolution $\Evol(\gamma)$ and hence $G$ is $C^k$-semiregular.
Lemma~\ref{C1curveSilva2} applies to $f:=\evol\colon C^k([0,1],\cg)\to G$,
whence
\[
\evol|_{C^{k+1}([0,1],\cg)}\colon C^{k+1}([0,1],\cg)\to G
\]
is $C^1$
and hence $C^\infty$ (by Theorem~E).
If $\Evol^r(\gamma)$ exists for $\gamma\in P_n$
and $\evol^r\colon P_n\to G$ is~$C^1$,
then ... shows that $\Evol(\gamma)=\Evol(-\gamma)^{-1}$
exists for all $\gamma\in -P_n$
and $\evol|_{P_n}$ is~$C^1$.
Now apply the above case.
\end{proof}
%
%
\begin{cor}\label{anaDiffreg}
For each compact real-analytic manifold $M$,
the Lie group $\Diff^\omega(M)$ is $C^1$-regular.
\end{cor}
\begin{proof}
The inclusion map $\alpha\colon \Diff^\omega(M)\to \Diff(M)$
into the $C^0$-regular Lie group $\Diff(M)$
of all smooth diffeomorphisms
is a smooth group homomorphism,
whence hypothesis (b) of Proposition~\ref{C1regSilva}
is satisfied. One can check
that also all other hypotheses
are satisfied using right evolution map, and hence the proposition
applies.
On the author's request,
further details will be given in a later
version of~\cite{DaS}.
\end{proof}
{\small
Helge  Gl\"{o}ckner, Universit\"at Paderborn,
Institut f\"{u}r Mathematik,\\
Warburger Str.\ 100, 33098 Paderborn, Germany;
\,Email: {\tt glockner@math.upb.de}}
\end{document}